\definecolor{refkey}{gray}{.75}
\definecolor{labelkey}{gray}{.75}
\newtheorem{theorem}{Theorem}[section]
\newtheorem{corollary}{Corollary}[section]
\newtheorem{proposition}{Proposition}[section]
\newtheorem{lemma}{Lemma}[section]
\newtheorem{remark}{Remark}[section]
\newcommand{\aiminset}[1]{\left\{#1 \right\}}
\newcommand{\aiminabs}[1]{\lvert #1 \rvert }
\newcommand{\aiminnorm}[1]{\| #1 \| }
\newcommand{\aimininner}[2]{\langle #1, #2 \rangle}
\numberwithin{equation}{section}
\numberwithin{figure}{section}
\let\oldtocsection=\tocsection
\let\oldtocsubsection=\tocsubsection
\let\oldtocsubsubsection=\tocsubsubsection
\renewcommand{\tocsection}[2]{\hspace{0em}\oldtocsection{#1}{#2}}
\renewcommand{\tocsubsection}[2]{\hspace{2em}\oldtocsubsection{#1}{#2}}
\renewcommand{\tocsubsubsection}[2]{\hspace{4em}\oldtocsubsubsection{#1}{#2}}
\subjclass[2010]{35Q30, 34A12, 34D45}
\keywords{Boussinesq system, global regularity, global attractor, initial-boundary value problem}
\begin{document}
\title[The global global attractor for the 2D Boussinesq system]{The global well-posedness and global attractor for the solutions to the 2D Boussinesq system with variable viscosity and thermal diffusivity}
\author{Aimin Huang}
\address{The Institute for Scientific Computing and Applied Mathematics, Indiana University, 831 East Third Street, Rawles Hall, Bloomington, Indiana 47405, U.S.A.}
\email{huangepn@gmail.com}
\date{\today}

\begin{abstract}
Global well-posedness of strong solutions and existence of the global attractor to the initial and boundary value problem of 2D Boussinesq system in a periodic channel with non-homogeneous boundary conditions for the temperature and viscosity and thermal diffusivity depending on the temperature are proved.
\end{abstract}

\maketitle

\setcounter{tocdepth}{2}
\tableofcontents
\addtocontents{toc}{~\hfill\textbf{Page}\par}

\section{Introduction}\label{sec1}
We study the evolution problem of the 2D Boussinesq system governing the coupled mass and heat flow of a viscous incompressible fluid by assuming that viscosity and heat conductivity are temperature dependent. The system reads in the non-dimensional form:
\begin{equation}\begin{cases}\label{eq1.1.1}
	\partial_t\boldsymbol u - {\mathrm{div}\,}(\nu(T) \nabla \boldsymbol u) + \boldsymbol u\cdot \nabla \boldsymbol u + \nabla p = T \boldsymbol e_2,\quad \boldsymbol e_2=(0,1),\\
	{\mathrm{div}\,}\boldsymbol u=0,\\
	\partial_t T - {\mathrm{div}\,}(\kappa(T)\nabla T) + \boldsymbol u\cdot \nabla T =0,
\end{cases}\end{equation}
and the physical domain in consideration is the region
$$\Omega:=\mathbb{T}_x\times(0,1)_y,$$
where we assume the periodic boundary conditions in the $x$-direction and $\mathbb{T}_x$ is the 1-torus, with the endpoints of the interval $[0,1]$ identified; $\boldsymbol u=(u_1,u_2)$ and $T$ denote the velocity and the temperature of the fluid respectively. The viscosity $\nu$ and the thermal diffusivity $\kappa$ depend on the temperature, which is the key issue in this article.  We associate with \eqref{eq1.1.1} the following initial and boundary conditions (free-slip boundary conditions for $\boldsymbol u$ and Dirichlet boundary condition for $T$), beside the periodicity in $x$-direction of all functions:
\begin{equation}\begin{cases}\label{eq1.1.2}
	\boldsymbol u(0,x,y)=\boldsymbol u_0(x,y),\quad\quad T(0,x,y)=T_0(x,y),\\
	 u_2(t,x,y)\big|_{y=0,1}=0,\quad\quad \frac{\partial u_1}{\partial y}(t,x,y)\big|_{y=0,1}=0,\\
	T(t,x)\big|_{y=0}=1,\quad\quad\quad\quad T(t,x,y)\big|_{y=1}=0.
\end{cases}\end{equation}
Note that the boundary conditions \eqref{eq1.1.2}$_2$ can also be rewritten as
\[
\boldsymbol u\cdot \boldsymbol n = 0,\quad\quad \frac{\partial \boldsymbol u}{\partial \boldsymbol n}\times\boldsymbol n=0,
\]
where $\boldsymbol n$ is the outward unit normal vector to $\partial\Omega$.

Following a traditional approach (see e.g. \cite{Tem97, Wang07}), we recast the Boussinesq system in terms of the perturbative variable (perturbation away from the pure conduction state $(0,1-y)$); namely we set
\begin{equation}
	(\boldsymbol u,\theta)=(\boldsymbol u, T-(1-y)).
\end{equation}
In the perturbative variables, the Boussinesq system \eqref{eq1.1.1} reads
\begin{equation}\begin{cases}\label{eq1.1.3}
	\partial_t\boldsymbol u - {\mathrm{div}\,}(\nu(\theta) \nabla \boldsymbol u) + \boldsymbol u\cdot \nabla \boldsymbol u + \nabla p = \theta \boldsymbol e_2 + (1-y)\boldsymbol e_2,\quad \boldsymbol e_2=(0,1),\\
	{\mathrm{div}\,}\boldsymbol u=0,\\
	\partial_t \theta - {\mathrm{div}\,}(\kappa(\theta)\nabla \theta) + \boldsymbol u\cdot \nabla \theta - u_2 = -\kappa'(\theta)\theta_y,
\end{cases}\end{equation}
with initial and boundary conditions
\begin{equation}\begin{cases}\label{eq1.1.4}
	\boldsymbol u(0,x,y)=\boldsymbol u_0(x,y),\quad\quad \theta(0,x,y)=\theta_0(x,y):=T_0(x,y)-(1-y),\\
	u_2(t,x,y)\big|_{y=0,1}=0,\quad\quad \frac{\partial u_1}{\partial y}(t,x,y)\big|_{y=0,1}=0,\quad\quad \theta(t,x,y)\big|_{y=0,1}=0.
\end{cases}\end{equation}
We note that we already write
\[
\nu(\theta)=\nu(T),\quad\quad\kappa(\theta)=\kappa(T).
\]

\noindent\textbf{Conditions on the viscosity $\nu$ and thermal diffusivity $\kappa$.} We assume that $\nu(\cdot)$ and $\kappa(\cdot)$ are $\mathcal C^3(\mathbb R)$-functions of $\tau$ satisfying
\begin{equation}\label{asp1.1.1}
		\nu(\tau)\geq \underline\nu,\quad\kappa(\tau)\geq \underline\kappa,\quad\forall\;\tau\in\mathbb R,
\end{equation}
for some positive constants $\underline\nu,\;\underline\kappa>0$.
We remark here that the condition \eqref{asp1.1.1} is sufficient for showing the global well-posedness of the strong solutions to \eqref{eq1.1.3}-\eqref{eq1.1.4} as we will indicate in Section~\ref{sec2} (see Remark~\ref{rmk2.0.1}). However, since we are mainly interested in the global attractor in this article, we assume additional control on the growth rates of $\nu$ and $\kappa$:
\begin{equation}\label{asp1.1.2}
\begin{cases}
	\aiminabs{\nu'(\tau)},\;\aiminabs{\kappa'(\tau)}\leq c_0(\aiminabs{\tau}^r + 1),\\
	\nu(\tau),\;\kappa(\tau)\leq  c_0(\aiminabs{\tau}^{r+1}+1),	
\end{cases} \quad\forall\;\tau\in\mathbb R,
\end{equation}
for some positive constants $c_0>0$ and $r\geq 0$, and the control
\begin{equation}\label{asp1.1.3}
	\frac{ \aiminabs{\nu'(\tau)} }{\kappa(\tau)},\;\frac{\aiminabs{\kappa'(\tau)}}{\kappa(\tau)},\;\frac{\aiminabs{\kappa''(\tau)}}{\kappa(\tau)}\leq \tilde c_0,\quad\forall\;\tau\in\mathbb R,
\end{equation}
for some positive constant $\tilde c_0>0$.
We note that \eqref{asp1.1.2}$_1$ implies \eqref{asp1.1.2}$_2$ by the fundamental theorem of calculus and Young's inequality.

In the case when $\nu$ and $\kappa$ are positive constants, the global well-posedness of the 2D Boussinesq system is classical, and the existence of the global attractor and its finite dimensionality  have been studied in e.g. \cite{FMT87, MZ97}. The asymptotic behavior of the global attractors to the 2D Boussinesq system at large Prandtl Number has been showed in \cite{Wang05,Wang07}. Recently, there are many works devoted to the study of the Boussinesq system with partial viscosity (that is either $\nu>0$ and $\kappa=0$; or $\nu=0$ and $\kappa>0$), see for example \cite{HL05,Cha06, LPZ11}; and there are also many works devoted to the case when only either the horizontal viscosity or vertical viscosity is present, see for examples \cite{ACW11, CW13, MZ13}.

In some realistic applications, the variation of the fluid viscosity and thermal conductivity with temperature can not be disregarded (see for example \cite{LB96} and the references therein). The dependence on the temperature of viscosity $\nu$ and diffusivity $\kappa$ introduces a strong nonlinear coupling between the equations and the problem becomes more complicated. In the case of no-slip boundary conditions for the velocity $\boldsymbol u$, S.A. Lorca and J.L. Boldrini \cite{LB99} first proved the existence of weak solutions and also well-posedness of local strong solutions to \eqref{eq1.1.1}-\eqref{eq1.1.2} (or \eqref{eq1.1.3}-\eqref{eq1.1.4}) with general non-homogeneous boundary conditions for the temperature. Recently, in \cite{WZ11,SZ13}, the authors proved the global well-posedness result of strong solutions to the system \eqref{eq1.1.1}-\eqref{eq1.1.2} in $\mathbb R^2$ and in a bounded domain with homogeneous boundary condition for the temperature. In \cite{LPZ13}, the authors showed the global well-posedness of strong solutions in the case when only thermal diffusivity is present and $\nu=0$ and provided some decay estimates, which actually inspired this work.

In this article, we consider the free-slip boundary conditions for the velocity $\boldsymbol u$ and the non-homogeneous (physical) boundary condition for the temperature $T$ and prove a global well-posedness result as well as the existence of a global attractor for the Boussinesq system \eqref{eq1.1.3}-\eqref{eq1.1.4}. The free-slip boundary conditions are suitable for many geophysical applications where the top and bottom boundaries are somewhat artificial and hence free-slip may be more appropriate to avoid artificial boundary layer (see \cite{TS82}).
We point out that although the statements of the results in Theorems~\ref{thm1.2}-\ref{thm1.3} are quite expected, their proofs depart sharply from and are much more involved than the classical ones when the viscosity and diffusivity are positive constants, requiring extra and harder estimates (see Sections~\ref{sec2}-\ref{sec3}), and we have made great effort to simplify the proof.

The rest of this article is organized as follows.
At the end of this introduction, we introduce the functional setting and state our main results.
The main steps to prove the main results are that we first show the time-uniform a priori estimates,  then prove the continutiy of the solutions with respect to the initial data, and finally obtain the existence of the global attractor of the Boussinesq system.
In Section~\ref{sec2}, we first collect some basic facts which will be used throughout this article and then derive the maximum principle for $\theta$ and the time-uniform $L^2$, $H^1$, and $H^2$-estimates successively. We remark that the uniform $H^1$-estimate for $\theta$ is the most difficult one to obtain where we need to utilize two auxiliary functions. Section~\ref{sec3} is devoted to prove the continuous dependence of the strong solutions in $H^2$ with respect to the initial data. Finally, in Section~\ref{sec4}, the existence of the global attractor is proved, where we show the compactness result via a special technique using an Aubin-Lions compactness lemma combined with the use of the Riesz lemma and a continuity argument. This method has already been used in \cite{Ju01, Ju07}.

\subsection{Functional setting}\label{subsec1.1}
We introduce the following function spaces corresponding to the free-slip boundary conditions of $\boldsymbol u$ (see \cite{FMT87, MZ97, Zia98}):
\[
H_1=\aiminset{\boldsymbol u\in L^2(\Omega)^2,\; {\mathrm{div}\,} \boldsymbol u=0,\; u_2\big|_{y=0,1}=0},
\]
\[
V_1=\aiminset{\boldsymbol u\in H^1(\Omega)^2,\; {\mathrm{div}\,} \boldsymbol u=0,\; u_2\big|_{y=0,1}=0}.
\]
Here and in the following, the periodicity in the $x$-direction is implicitly embedded in the definition of the domain $\Omega=\mathbb T_x\times(0,1)_y$.

The Stokes operator $A_1$ is then defined as a linear unbounded operator in $H_1$ and given by
\[
A_1\boldsymbol v = -P_{H_1} \Delta\boldsymbol v,\quad\forall\,\boldsymbol v\in\mathcal D(A_1),
\]
where $P_{H_1}\, :\, L^2(M)\mapsto H_1$ is the projection operator and the domain $\mathcal D(A_1)$:
\[
\mathcal D(A_1)=\aiminset{\boldsymbol u\in H^2(\Omega)^2,\; {\mathrm{div}\,} \boldsymbol u=0,\; (u_2, \frac{\partial u_1}{\partial y})\big|_{y=0,1}=0 }.
\]
It has been shown in  \cite[Proposition~2.1]{Zia98} that in the case of free-slip boundary conditions, the Stokes operator $A_1$ coincides, on its domain, with the negative-Laplacian operator; that is
\begin{equation}\label{eq1.1.6}
	A_1\boldsymbol u=-\Delta \boldsymbol u,\quad\forall\,\boldsymbol u\in\mathcal D(A_1).
\end{equation}
For the sake of completeness, we sketch the proof of \eqref{eq1.1.6}. To prove \eqref{eq1.1.6}, we note that one only need to show that $\Delta \boldsymbol u$ belongs to $H_1$, which is equivalent to verify that
\begin{equation}\label{eq1.1.7}
\Delta u_2 =0,\quad\text{ at }y=0,1.
\end{equation}
To verify \eqref{eq1.1.7}, we first deduce from $u_2|_{y=0,1}=0$ that $u_{2,xx}|_{y=0,1}=0$ and note that
\[
u_{2,yy}= \partial_y({\mathrm{div}\,}\boldsymbol u) - \partial_x(u_{1,y}),
\]
which implies that $u_{2,yy}|_{y=0,1}=0$ thanks to the divergence free condition of $\boldsymbol u$. Therefore, we proved \eqref{eq1.1.7} and hence \eqref{eq1.1.6}.

Thanks to \eqref{eq1.1.6} and the periodicity in $x$-direction, we obtain
\begin{equation}\label{eq1.1.10}
	-\int_\Omega \nabla p\cdot \Delta \boldsymbol u{\mathrm{d}x}{\mathrm{d}y} = \int_\Omega \nabla p\cdot A_1\boldsymbol u{\mathrm{d}x}{\mathrm{d}y} = 0,\quad\forall\,u\in \mathcal D(A_1),\;p\in H^1(\Omega).
\end{equation}

\begin{remark}
	We remark that the identity \eqref{eq1.1.10} is essential for proving the uniform $H^1$-estimate for the velocity $\boldsymbol u$ in Section~\ref{subsec2.4}. This identity has also been used to prove the global strong solutions for the 3D Primitive Equations, see \cite[Section~3.3.1]{CT06}. Showing the existence of global attractor for Boussinesq system \eqref{eq1.1.3} with no-slip boundary conditions for the velocity $\boldsymbol u$ will necessitate taking into account the pressure term.
\end{remark}

Observe that the equation satisfied by $u_1$ is
	\begin{equation}\label{eq1.1.20}
	\frac{\partial u_1}{\partial t} - {\mathrm{div}\,}(\nu(\theta)\nabla u_1) + (\boldsymbol u\cdot\nabla)u_1 + p_x = 0,	
	\end{equation}
and if we integrate this equation in $\Omega$, use the periodicity in the $x$-direction and the free-slip boundary conditions for $\boldsymbol u$, we arrive at
\[
\frac{\partial}{\partial t}\int_\Omega u_1 {\mathrm{d}x}{\mathrm{d}y} =0.
\]
Hence,
\[
\int_\Omega u_1(x,y,t) {\mathrm{d}x}{\mathrm{d}y} = \int_\Omega u_1(x,y,0) {\mathrm{d}x}{\mathrm{d}y} =: K,
\]
for some constant $K\in\mathbb R$. Therefore, since this component of $u_1$ is known (fixed) during the evolution, we make a variable change $u_1'=u_1 - K$ and see that $u_1'$ satisfies the same equation \eqref{eq1.1.20} as $u_1$. From now on, dropping the prime, we still consider \eqref{eq1.1.20} as the equation for $u_1$ and assume that  $u_1$ satisfies
\[
\int_\Omega u_1{\mathrm{d}x}{\mathrm{d}y}=0.
\]
We then set
\begin{equation}
	\begin{split}
		H&=H_1 \cap\aiminset{\int_\Omega u_1{\mathrm{d}x}{\mathrm{d}y} = 0 }\times L^2(\Omega),\\
		V&=V_1\cap\aiminset{\int_\Omega u_1{\mathrm{d}x}{\mathrm{d}y} = 0 }\times H_0^1(\Omega),
	\end{split}
\end{equation}
and define the unbounded linear operator $A$ from $H$ into $H$ by setting
\[
A(\boldsymbol u, \theta)=(A_1\boldsymbol u, -\Delta \theta),\quad\forall\,(\boldsymbol u,\theta)\in \mathcal D(A),
\]
where the domain
\[
\mathcal D(A) = \mathcal D(A_1)\cap\aiminset{\int_\Omega u_1{\mathrm{d}x}{\mathrm{d}y} = 0 }\times (H_0^1(\Omega)\cap H^2(\Omega)).
\]
Note that the extra condition $\int_\Omega u_1{\mathrm{d}x}{\mathrm{d}y}=0$ for the spaces $H$, $V$ and $\mathcal D(A)$ also ensures the coercivity of the Stokes operator $A_1$ and hence the operator $A$.

Classically, $A$ is self-adjoint positive and $A^{-1}$ is a compact (self-adjoint) linear operator on $H$. Hence, we can find an orthogonal eigen-fuctions of $A$, which are complete in the spaces $H$, $V$ and $\mathcal D(A)$ with respect to the inner products $\aimininner{\cdot}{\cdot}$, $\aimininner{\nabla\cdot}{\nabla\cdot}$ and $\aimininner{A\cdot}{A\cdot}$, respectively. This guarantees us that we are able to implement the Galerkin approximation for the system \eqref{eq1.1.3}-\eqref{eq1.1.4}.

\subsection{Main results}
\begin{theorem}\label{thm1.1}
Assume that \eqref{asp1.1.1} holds and that $(\boldsymbol u_0,\theta_0)\in H$.
Then for any $t_1>0$, the Boussinesq system \eqref{eq1.1.3}-\eqref{eq1.1.4} possesses a weak solution $(\boldsymbol u,  \theta)$ satisfying
\begin{equation}
	(\boldsymbol u,\theta) \in \, \mathcal C([0,t_1], H)\cap L^2(0,t_1; V).
\end{equation}
\end{theorem}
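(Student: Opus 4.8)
The plan is to construct a weak solution by the Galerkin method, using the complete orthogonal system of eigenfunctions $\{(\boldsymbol w_j,\rho_j)\}_{j\ge1}$ of the operator $A$ introduced above as a basis of $H$, $V$ and $\mathcal D(A)$. For each $m$ I would look for an approximate solution
\[
(\boldsymbol u_m,\theta_m)(t)=\sum_{j=1}^m g_j^m(t)\,(\boldsymbol w_j,\rho_j)
\]
satisfying the system \eqref{eq1.1.3} projected onto $H_m:=\mathrm{span}\{(\boldsymbol w_j,\rho_j):1\le j\le m\}$ and tested against each basis element. Because the basis is divergence free and satisfies the free-slip boundary conditions, the pressure term disappears upon projection (this is exactly the role of \eqref{eq1.1.10}), and the coefficients $\nu(\theta_m),\kappa(\theta_m),\kappa'(\theta_m)$ are smooth functions of the unknowns $g_j^m$ since $\nu,\kappa\in\mathcal C^3(\mathbb R)$. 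The projected system is thus an ODE system with locally Lipschitz right-hand side, so the Cauchy--Lipschitz theorem yields a unique maximal solution on some interval $[0,t_m)$.

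Next I would derive the uniform a priori estimates that both preclude blow-up of the ODE (giving $t_m=t_1$) and provide the compactness needed to pass to the limit. Testing the velocity equation with $\boldsymbol u_m$ and the temperature equation with $\theta_m$, the convection terms vanish by the divergence-free condition together with the boundary conditions, and all boundary contributions from the diffusion terms vanish as well: for the velocity the free-slip conditions give $\partial_{\boldsymbol n}\boldsymbol u_m\cdot\boldsymbol u_m=0$ on $y=0,1$, and for the temperature the homogeneous Dirichlet condition removes the boundary term. The delicate source term $-\kappa'(\theta_m)\theta_{m,y}$ integrates to zero against $\theta_m$: writing $G(\tau):=\int_0^\tau\kappa'(s)s\,\mathrm ds$ one has $\kappa'(\theta_m)\theta_m\theta_{m,y}=\partial_yG(\theta_m)$, whose integral vanishes since $\theta_m|_{y=0,1}=0$. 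Using the coercivity $\nu\ge\underline\nu$, $\kappa\ge\underline\kappa$ from \eqref{asp1.1.1} and bounding the remaining linear sources $\int_\Omega(\theta_m+(1-y))u_{2,m}$ and $\int_\Omega u_{2,m}\theta_m$ by Cauchy--Schwarz and Young's inequality, I obtain
\[
\frac{\mathrm d}{\mathrm dt}\big(\|\boldsymbol u_m\|^2+\|\theta_m\|^2\big)+\underline\nu\|\nabla\boldsymbol u_m\|^2+\underline\kappa\|\nabla\theta_m\|^2\le C\big(\|\boldsymbol u_m\|^2+\|\theta_m\|^2\big)+C,
\]
and Gr\"onwall's lemma then gives a bound for $(\boldsymbol u_m,\theta_m)$ in $L^\infty(0,t_1;H)\cap L^2(0,t_1;V)$ uniform in $m$.

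To extract a convergent subsequence I would bound the time derivatives $(\partial_t\boldsymbol u_m,\partial_t\theta_m)$ in a suitable dual space (for instance $L^2(0,t_1;V')$ in two space dimensions, using the above estimates to control the nonlinear and variable-coefficient terms against test functions), and then invoke the classical Aubin--Lions compactness lemma. This yields a subsequence with $(\boldsymbol u_m,\theta_m)\rightharpoonup(\boldsymbol u,\theta)$ weakly in $L^2(0,t_1;V)$, weakly-$*$ in $L^\infty(0,t_1;H)$, strongly in $L^2(0,t_1;H)$, and almost everywhere in $(0,t_1)\times\Omega$.

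The final and \emph{main obstacle} is passing to the limit in the nonlinear terms. The convection terms are handled by the standard weak--strong argument (strong $L^2$ convergence of $(\boldsymbol u_m,\theta_m)$ against weak $L^2$ convergence of their gradients, admissible in two dimensions). The genuinely new difficulty is the temperature-dependent diffusion $\nu(\theta_m)\nabla\boldsymbol u_m$, $\kappa(\theta_m)\nabla\theta_m$ and the source $\kappa'(\theta_m)\theta_{m,y}$: here I would use the almost-everywhere convergence $\theta_m\to\theta$ together with the continuity of $\nu,\kappa,\kappa'$ to get $\nu(\theta_m)\to\nu(\theta)$, $\kappa(\theta_m)\to\kappa(\theta)$ and $\kappa'(\theta_m)\to\kappa'(\theta)$ a.e., and combine this (via Egorov's theorem or a dominated-convergence argument) with the weak $L^2$ convergence of the gradients to identify the limits of these products. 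Once the limit equations hold for test functions from the basis, a density argument extends them to all of $V$, and a standard Lions--Magenes-type argument upgrades $(\boldsymbol u,\theta)$ to $\mathcal C([0,t_1];H)$ and shows that the initial data are attained, completing the construction.
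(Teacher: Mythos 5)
Your proposal is correct and takes essentially the same route as the paper, which in fact writes out no proof at all: it invokes the standard Galerkin approximation on the spectral basis of $A$ and defers the details to \cite[Section~4]{LB99}, observing only that free-slip rather than no-slip boundary conditions change nothing in the $L^2$-energy estimates — exactly the scheme (projected ODE system, energy estimate with the cancellation of $\int_\Omega \kappa'(\theta_m)\theta_m\partial_y\theta_m\,{\mathrm{d}x}{\mathrm{d}y}$, Aubin--Lions compactness, a.e.\ convergence of $\theta_m$ to identify $\nu(\theta_m)\nabla\boldsymbol u_m$, $\kappa(\theta_m)\nabla\theta_m$ and $\kappa'(\theta_m)\partial_y\theta_m$) that you spell out. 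One small caution: your Egorov/dominated-convergence step and your $L^2(0,t_1;V')$ bound on the time derivatives implicitly require an upper bound on $\nu$ and $\kappa$ (which is what \cite{LB99} assumes), whereas \eqref{asp1.1.1} alone gives only lower bounds, so strictly speaking an additional truncation of $\nu,\kappa$ (or the growth conditions \eqref{asp1.1.2}) would be needed there — a point the paper itself glosses over in the same way.
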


Following \cite[Section~4]{LB99} line by line and using the standard Galerkin approximation, we are able to conclude the existence of weak solutions for system \eqref{eq1.1.3}-\eqref{eq1.1.4}, that is Theorem~\ref{thm1.1}, since the difference between no-slip boundary conditions and free-slip boundary conditions will not make any difference for the $L^2$-estimates. Actually, the a priori estimates will be easier for us than those in \cite[Section~4]{LB99} since the boundary condition for the temperature $T$ is simpler in our case and the simplification of the external force, where the external force $g$ in \cite{LB99} is non-constant while it is chosen to be a constant (the gravitational acceleration) in our case and omitted in \eqref{eq1.1.3}. We omit the proof of Theorem~\ref{thm1.1} here.

We now state the result about the global strong solutions of Boussinesq system \eqref{eq1.1.3}-\eqref{eq1.1.4}, which is proved by Galerkin approximation and the uniform estimates in Section~\ref{sec2}. Let us denote by $W$ the domain $\mathcal D(A)$, that is
\begin{equation*}\begin{split}
	 W:= \aiminset{ (\boldsymbol u,\theta)\in H^2(\Omega)^3\,:\,{\mathrm{div}\,}\boldsymbol u=0,\; (u_2,\frac{\partial u_1}{\partial y},\theta )\big|_{y=0,1}=0,\text{ and }\int_\Omega u_1{\mathrm{d}x}{\mathrm{d}y} = 0 }.
\end{split}\end{equation*}

\begin{theorem}\label{thm1.2}
Assume that \eqref{asp1.1.1}-\eqref{asp1.1.3} holds and that  $(\boldsymbol u_0,\theta_0)\in W$. Then for any $ t_1>0$,
the Boussinesq system \eqref{eq1.1.3}-\eqref{eq1.1.4} has a unique strong solution $(\boldsymbol u,  \theta)$ satisfying
\begin{equation}\begin{split}
	(\boldsymbol u,\theta) &\in \,\mathcal C([0, t_1], W)\cap L^2(0, t_1; H^3(\Omega)),\\
	(\boldsymbol u_t,\theta_t) &\in \mathcal C([0, t_1], H)\cap L^2(0, t_1; V).\\
\end{split}\end{equation}
\end{theorem}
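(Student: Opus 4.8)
The plan is to construct the strong solution by the Galerkin method on the eigenbasis of $A$, to establish a hierarchy of a priori estimates that are uniform in the truncation parameter (and, with a view toward Section~\ref{sec4}, uniform in time), to pass to the limit by compactness, and to prove uniqueness by an energy estimate on the difference of two solutions. Concretely, I would fix the orthonormal basis $\{\boldsymbol w_j\}$ of $H$ made of eigenfunctions of $A$ (available by the spectral theory recalled after \eqref{eq1.1.6}), seek approximate solutions $(\boldsymbol u_n,\theta_n)$ in $\mathrm{span}\{\boldsymbol w_1,\dots,\boldsymbol w_n\}$ solving the projected system, and observe that this is an ODE system whose local-in-time solvability is immediate; both the global extension in $n$ and the passage $n\to\infty$ rest on the estimates below.

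I would begin the estimates, as announced for Section~\ref{sec2}, with a maximum principle for the temperature. Since the equation for $T=\theta+(1-y)$ is a convection--diffusion equation without source, a comparison argument yields a uniform-in-time $L^\infty$ bound on $\theta$. This bound is the structural key: together with \eqref{asp1.1.1}--\eqref{asp1.1.2} it makes $\nu(\theta)$, $\kappa(\theta)$ and their derivatives bounded above and below along the solution, so that the variable-coefficient operators act as uniformly elliptic ones on the range of $\theta$. Next comes the $L^2$ estimate, obtained by testing the momentum equation with $\boldsymbol u$ and the temperature equation with $\theta$: the convective terms vanish by $\mathrm{div}\,\boldsymbol u=0$ and the boundary conditions, the buoyancy coupling is absorbed by Cauchy--Schwarz and Gronwall, and the forcing $-\kappa'(\theta)\theta_y$ is controlled through the $L^\infty$ bound on $\kappa'(\theta)$; coercivity also delivers the $L^2(0,t_1;V)$ bound. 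For the $H^1$ estimate of $\boldsymbol u$ I would test the momentum equation with $A_1\boldsymbol u=-\Delta\boldsymbol u$: the pressure term drops by the crucial identity \eqref{eq1.1.10}, the diffusion contributes the coercive $\int_\Omega\nu(\theta)\aiminabs{\Delta\boldsymbol u}^2$ plus a lower-order term involving $\nabla\theta$, and the nonlinearity $\boldsymbol u\cdot\nabla\boldsymbol u$ is dominated via Ladyzhenskaya's inequality, coupling this step to the temperature estimate.

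The decisive difficulty, exactly as flagged in the introduction, is the uniform $H^1$ estimate for $\theta$. Testing the temperature equation with $-\Delta\theta$ produces, from the variable diffusion $\mathrm{div}(\kappa(\theta)\nabla\theta)=\kappa(\theta)\Delta\theta+\kappa'(\theta)\aiminabs{\nabla\theta}^2$, the cubic term $\int_\Omega\kappa'(\theta)\aiminabs{\nabla\theta}^2\Delta\theta$; by the two-dimensional Gagliardo--Nirenberg inequality the dominant part of this is of size $\aiminnorm{\nabla\theta}\,\aiminnorm{\Delta\theta}^2$ and so is supercritical relative to the coercive term $\int_\Omega\kappa(\theta)\aiminabs{\Delta\theta}^2$, refusing to close by a naive absorption. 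I expect the two auxiliary functions announced in the overview to resolve precisely this: a Kirchhoff-type potential $\Phi(\theta)=\int_0^\theta\kappa(s)\,\mathrm ds$, for which $\mathrm{div}(\kappa(\theta)\nabla\theta)=\Delta\Phi$ linearizes the principal part, together with a second quantity arranged so that the surviving terms carry exactly the ratios $\aiminabs{\kappa'}/\kappa$, $\aiminabs{\kappa''}/\kappa$ and $\aiminabs{\nu'}/\kappa$ that are bounded in \eqref{asp1.1.3}; it is this assumption that renders the obstruction absorbable. This is the step I expect to dominate the whole proof.

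Once $\nabla\theta$ is controlled uniformly, the velocity and temperature $H^1$ estimates close together, and I would bootstrap to the $H^2$ level by testing with $A^2(\boldsymbol u,\theta)$ and by differentiating the system in time and testing with $(\boldsymbol u_t,\theta_t)$; this yields the uniform bound in $W$, the $L^2(0,t_1;H^3)$ bound, and the stated regularity of $(\boldsymbol u_t,\theta_t)$ in $\mathcal C([0,t_1],H)\cap L^2(0,t_1;V)$. With these uniform bounds, an Aubin--Lions compactness argument provides strong $L^2$ convergence, enough to identify every nonlinear limit, namely $\boldsymbol u\cdot\nabla\boldsymbol u$, $\nu(\theta)\nabla\boldsymbol u$, $\kappa(\theta)\nabla\theta$ and $\kappa'(\theta)\theta_y$, and thereby to produce a strong solution. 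Finally, uniqueness follows by subtracting two strong solutions and estimating their difference in $H$: the coefficient differences such as $\nu(\theta_1)-\nu(\theta_2)$ are Lipschitz-controlled by \eqref{asp1.1.2} together with the $H^2$ regularity, so a Gronwall inequality closes the argument.
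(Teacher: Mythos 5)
Your proposal is correct and follows essentially the same route as the paper: Galerkin approximation on the eigenbasis of $A$, the maximum principle for $T$ giving an $L^\infty$ bound on $\theta$ and hence pointwise control of $\nu(\theta)$, $\kappa(\theta)$ and their derivatives, the Kirchhoff-type potential $\hat\theta=\int_0^\theta\kappa(\tau)\,\mathrm{d}\tau$ (together with the second auxiliary function $\breve\theta=\int_0^\theta\sqrt{\kappa(\tau)}\,\mathrm{d}\tau$) to defuse exactly the supercritical term $\int_\Omega\kappa'(\theta)\aiminabs{\nabla\theta}^2\Delta\theta\,{\mathrm{d}x}{\mathrm{d}y}$ you identify in the $H^1$ estimate for $\theta$, the pressure identity \eqref{eq1.1.10} in the velocity $H^1$ estimate, and time-differentiated energy estimates combined with elliptic/Stokes regularity at the $H^2$ and $H^3$ levels. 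The only real deviation is your uniqueness step, which closes a Gronwall estimate for the difference in $H$ (i.e.\ in $L^2$), whereas the paper proves the stronger Lipschitz continuity in $H^1$ and then $H^2$ (Section~\ref{sec3}); your simpler argument suffices for the uniqueness claim of Theorem~\ref{thm1.2}, the paper's stronger continuity being needed only later for the attractor.
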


The existence of the global attractor for the Boussinesq system \eqref{eq1.1.3}-\eqref{eq1.1.4} is given by the following Theorem.
\begin{theorem}\label{thm1.3}
	Assume that \eqref{asp1.1.1}-\eqref{asp1.1.3} holds.
	Then the solution operator $\aiminset{S(t)}_{t\geq 0}$ of the 2D Boussinesq system \eqref{eq1.1.3}-\eqref{eq1.1.4}: $S(t)(\boldsymbol u_0,\theta_0)=(\boldsymbol u(t),\theta(t))$ defines a  semigroup in the space $W$ for all $t\in\mathbb R_+$. Moreover, the following statements are valid:
	\begin{enumerate}
		\item for any $(\boldsymbol u_0,\theta_0)\in W$, $t\mapsto S(t)(\boldsymbol u_0,\theta_0)$ is a continuous function from $\mathbb R_+$ into $W$;
		\item for any fixed $t>0$, $S(t)$ is a continuous and compact map in $W$;
		\item $\aiminset{S(t)}_{t\geq 0}$ possesses a global attractor $\mathcal A$ in the space $W$. The global attractor $\mathcal A$ is compact and connected in $W$ and is the maximal bounded attractor in $W$ in the sense of the set inclusion relation; $\mathcal A$ attracts all bounded subsets in $W$ in the norm of $H^2(\Omega)$-norm.
	\end{enumerate}
\end{theorem}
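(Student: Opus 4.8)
The plan is to place the semigroup $\aiminset{S(t)}_{t\geq 0}$ in the classical abstract framework for global attractors (see e.g. \cite[Chapter~I]{Tem97}), drawing all the hard analytic input from the time-uniform a priori estimates of Section~\ref{sec2} and the continuous dependence of Section~\ref{sec3}; the single genuinely new ingredient is the compactness of each map $S(t)$, which I would obtain by the Aubin-Lions/Riesz/continuity device announced in the introduction. First I would verify that $\aiminset{S(t)}_{t\geq 0}$ is a well-defined semigroup on $W$: by Theorem~\ref{thm1.2} every datum $(\boldsymbol u_0,\theta_0)\in W$ launches a unique strong solution lying in $\mathcal C([0,t_1],W)$ for all $t_1>0$, so $S(t)$ maps $W$ into $W$, $S(0)=\mathrm{Id}$, and uniqueness yields $S(t+s)=S(t)S(s)$. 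Statement~(1), continuity of $t\mapsto S(t)(\boldsymbol u_0,\theta_0)$ into $W$, is then immediate from the regularity $(\boldsymbol u,\theta)\in\mathcal C([0,t_1],W)$, and the continuity of $S(t)$ with respect to the initial datum (the first half of statement~(2)) is precisely the $H^2$-continuous dependence proved in Section~\ref{sec3}.

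The compactness of $S(t)$ for fixed $t>0$ is the heart of the matter, and I expect it to be the main obstacle. I would argue by sequential compactness. Let $B\subset W$ be bounded and let $(\boldsymbol u_0^n,\theta_0^n)$ be any sequence in $B$, with solutions $(\boldsymbol u^n,\theta^n)=S(\cdot)(\boldsymbol u_0^n,\theta_0^n)$. The estimates of Section~\ref{sec2} bound $(\boldsymbol u^n,\theta^n)$ uniformly in $n$ in $L^\infty(0,t;W)\cap L^2(0,t;H^3(\Omega))$ and bound the time derivatives $(\boldsymbol u_t^n,\theta_t^n)$ in $L^2(0,t;H)$. Since $H^3(\Omega)\hookrightarrow\hookrightarrow H^2(\Omega)\hookrightarrow H$, the Aubin-Lions lemma produces a subsequence (not relabeled) converging strongly in $L^2(0,t;W)$, and the Riesz lemma then extracts a further subsequence converging in $W$ for almost every $s\in(0,t)$. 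Fixing one such $s_0\in(0,t)$ at which $(\boldsymbol u^n(s_0),\theta^n(s_0))$ converges in $W$, the semigroup identity $S(t)=S(t-s_0)\circ S(s_0)$ together with the $H^2$-continuous dependence of Section~\ref{sec3} forces $(\boldsymbol u^n(t),\theta^n(t))=S(t-s_0)(\boldsymbol u^n(s_0),\theta^n(s_0))$ to converge in $W$. Hence $S(t)B$ is relatively compact, which completes statement~(2). The delicate point, and the reason the Riesz lemma and the continuity argument are needed, is exactly this passage from almost-everywhere-in-time convergence (all that Aubin-Lions delivers) to convergence at the prescribed endpoint $t$.

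Finally, for statement~(3), the time-uniform character of the estimates in Section~\ref{sec2} furnishes a bounded absorbing set $\mathcal B_0\subset W$, namely a bounded set such that for every bounded $B\subset W$ there is $t_0(B)\geq 0$ with $S(t)B\subset\mathcal B_0$ for all $t\geq t_0(B)$. Picking any $t_1>0$ and using the compactness of $S(t_1)$ just proved, the set $\overline{S(t_1)\mathcal B_0}$ is a compact absorbing set. The existence of the global attractor $\mathcal A=\omega(\mathcal B_0)=\bigcap_{s\geq 0}\overline{\bigcup_{t\geq s}S(t)\mathcal B_0}$, its compactness, its maximality among bounded attractors, and the attraction of every bounded subset of $W$ in the $H^2(\Omega)$-norm then follow from the standard theory \cite[Chapter~I]{Tem97}. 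Connectedness of $\mathcal A$ follows from the same theory, since the phase space $W$ is a closed linear subspace of $H^2(\Omega)^3$, hence connected, and the semigroup is continuous.
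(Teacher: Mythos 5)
Your proposal is correct and follows essentially the same route as the paper: the abstract framework of \cite[Chapter~I]{Tem97} fed by the absorbing sets of Section~\ref{sec2} and the $H^2$-continuous dependence of Section~\ref{sec3}, with compactness of $S(t)$ obtained exactly as in the paper via the Aubin--Lions lemma (with $H^3(\Omega)\hookrightarrow\hookrightarrow H^2(\Omega)$), the Riesz lemma to pass to almost-everywhere-in-time convergence, and the semigroup identity $S(t)=S(t-s_0)S(s_0)$ combined with continuity at an intermediate time $s_0<t$. The only organizational difference is statement~(1): you cite the $\mathcal C([0,t_1],W)$ regularity asserted in Theorem~\ref{thm1.2}, whereas the paper establishes that continuity precisely at this point via the Lions--Magenes interpolation lemma, namely $[H^3(\Omega),H^1(\Omega)]_{1/2}=H^2(\Omega)$ applied to $(\boldsymbol u,\theta)\in L^2(0,t_1;H^3(\Omega))$ with $(\boldsymbol u_t,\theta_t)\in L^2(0,t_1;H^1(\Omega))$, since that part of Theorem~\ref{thm1.2} is only completed in Section~\ref{sec4}; your argument already contains all the ingredients for this interpolation step, so the difference is cosmetic rather than a gap.
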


\begin{remark}
	We actually do not need the assumptions \eqref{asp1.1.2}-\eqref{asp1.1.3} to prove Theorem~\ref{thm1.2}; see also Remark~\ref{rmk2.0.1}. However, as we said before, since we are mainly concerned about the global attractor, we do not provide an explicit proof of Theorem~\ref{thm1.2} when \eqref{asp1.1.2}-\eqref{asp1.1.3} are not satisfied.
\end{remark}


\section{Uniform estimates and absorbing balls}\label{sec2}

\subsection{Preliminary results}
Here and throughout this article, we will not distinguish the notations for vector and scalar function spaces whenever they are self-evident from the context. Recall that $\Omega=\mathbb{T}_x\times(0,1)_y$, and denote by $H^s(\Omega)$ the classical Sobolev space of order $s$ on $\Omega$ with periodicity in the $x$-direction, and by $L^p(\Omega)$  ($1\leq p\leq\infty$) the classical $L^p$-Lebesgue space with norm $\|\cdot\|_p$. For simplicity, we also use $\aiminnorm{\cdot}$ for the $L^2$-norm.

In the following, we denote by $C$ a positive constant which may depend on the constants $\underline\nu$, $\underline\kappa$, $c_0$, $\tilde c_0$, $r$, $\Omega$ and the constants $c_i$'s ($i=1,2,3,4$) appearing in Lemma~\ref{lem2.1.1} below, but is independent of time $t$ and of the initial data $\boldsymbol u_0$ and $\theta_0$. The constant $C$  may vary from line to line.

We now recall and prove some classical and useful inequalities which will be used frequently in this article. First, for our convenience, we will interchangeably use the following equivalent norms:
\[
\aiminnorm{\nabla f}\backsimeq \aiminnorm{f}_{H^1},\;\forall\,f\in H_0^1(\Omega);\quad\quad
\aiminnorm{\Delta f}\backsimeq \aiminnorm{f}_{H^2},\;\forall\,f\in H_0^1(\Omega)\cap H^2(\Omega),
\]
and
\[
\aiminnorm{\nabla \boldsymbol v}\backsimeq \aiminnorm{\boldsymbol v}_{H^1},\;\forall\boldsymbol v\in V_1\cap \{\int_\Omega v_1{\mathrm{d}x}{\mathrm{d}y} = 0 \};\;
\aiminnorm{\Delta \boldsymbol v}\backsimeq \aiminnorm{\boldsymbol v}_{H^2},\;\forall\boldsymbol v\in \mathcal D(A_1)\cap \{\int_\Omega v_1{\mathrm{d}x}{\mathrm{d}y} = 0 \},
\]
where $V_1$ and $\mathcal D(A_1)$ were defined in Section~\ref{subsec1.1}.

Lemmas~\ref{lem2.1.1}-\ref{lem2.1.2} below provide essentially well-known results which we recall because of their frequent use below.
\begin{lemma}\label{lem2.1.1}
There holds\footnote[1]{see e.g. \cite{Tem97}.}

\noindent Poincar\'e's inequality:
\[
\aiminnorm{f} \leq \aiminnorm{\nabla f},\quad\forall\,f\in H_0^1(\Omega);
\]

\noindent Sobolev embedding:
\[
\aiminnorm{f}_{p}\leq c_{1}(p) \aiminnorm{f}_{H^{1}},\quad\forall\,1\leq p<+\infty,\quad\forall\, f\in H^{1}(\Omega);
\]

\noindent Ladyzhenskaya's inequalities:
\begin{equation*}\begin{split}
\aiminnorm{f}_4^2&\leq c_2\aiminnorm{f}_2\aiminnorm{ f}_{H^1},\quad\forall\,f\in H^1(\Omega),\\
\aiminnorm{f}_4^2&\leq c_3\aiminnorm{f}_2\aiminnorm{\nabla f}_2,\quad  \forall\,f\in H_0^1(\Omega).
\end{split}\end{equation*}

\noindent Agmon's inequality:
\[
\aiminnorm{f}_\infty \leq c_4 \aiminnorm{f}_{H^1}^{1/2}\aiminnorm{f}_{H^2}^{1/2},\quad\forall\,f\in H^2(\Omega)\cap H_0^1(\Omega).
\]
\end{lemma}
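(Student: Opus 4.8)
The four families of inequalities are classical, and the plan is to derive them all from one–dimensional calculus estimates by exploiting the product structure $\Omega=\mathbb T_x\times(0,1)_y$ and tensorizing in the two variables; for the genuinely two–dimensional Sobolev embedding at an arbitrary finite exponent I would instead reduce to the standard result on the plane through a bounded extension operator $E\colon H^1(\Omega)\to H^1(\mathbb R^2)$, which exists because $\Omega$ is a smooth flat cylinder and the $x$–periodicity causes no difficulty. The workhorse is the following one–dimensional fact: for $g\in H^1(0,1)$ choose $y_0$ with $g(y_0)^2=\int_0^1 g^2$; then $g(y)^2=\int_0^1 g^2+2\int_{y_0}^y g\,g'$ gives $\|g\|_{L^\infty(0,1)}^2\le \|g\|_{L^2(0,1)}^2+2\|g\|_{L^2(0,1)}\|g'\|_{L^2(0,1)}$, and if moreover $g(0)=0$ the first term drops, leaving $\|g\|_{L^\infty(0,1)}^2\le 2\|g\|_{L^2(0,1)}\|g'\|_{L^2(0,1)}$. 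The analogous bound holds in the periodic variable $x$, the average of $g$ over $\mathbb T$ playing the role that the vanishing boundary value plays on $(0,1)$.

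For Poincar\'e's inequality, let $f\in H_0^1(\Omega)$; since $f(x,0)=0$ we have $f(x,y)=\int_0^y\partial_y f(x,s)\,ds$, so by Cauchy--Schwarz $|f(x,y)|^2\le y\int_0^1|\partial_y f(x,s)|^2\,ds\le \int_0^1|\partial_y f(x,s)|^2\,ds$, and integrating over $\Omega$ yields $\aiminnorm{f}^2\le \aiminnorm{\partial_y f}^2\le\aiminnorm{\nabla f}^2$, which is the claim with constant $1$. For the Sobolev embedding, I would apply $E$ and invoke the classical two–dimensional inclusion $H^1(\mathbb R^2)\hookrightarrow L^p(\mathbb R^2)$, valid for every $p<\infty$, then restrict to $\Omega$; the boundedness of $E$ produces the constant $c_1(p)$.

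Ladyzhenskaya's inequalities follow by tensorizing the $L^\infty$ bound. Writing $\int_\Omega|f|^4=\int_\Omega|f|^2\,|f|^2$ and bounding one factor by its supremum in $x$ and the other by its supremum in $y$, the one–dimensional estimate applied in each direction gives $\|f\|_4^4\le\bigl(\aiminnorm{f}^2+2\aiminnorm{f}\,\aiminnorm{\partial_y f}\bigr)\bigl(\aiminnorm{f}^2+2\aiminnorm{f}\,\aiminnorm{\partial_x f}\bigr)\le C\aiminnorm{f}^2\aiminnorm{f}_{H^1}^2$, whence $\aiminnorm{f}_4^2\le c_2\aiminnorm{f}\,\aiminnorm{f}_{H^1}$ for $f\in H^1(\Omega)$. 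When $f\in H_0^1(\Omega)$ the supremum in the $y$–direction loses its boundary term, so one factor improves to $2\aiminnorm{f}\,\aiminnorm{\partial_y f}$; bounding the remaining pieces by the Poincar\'e inequality just proved then produces $\aiminnorm{f}_4^2\le c_3\aiminnorm{f}\,\aiminnorm{\nabla f}$.

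Agmon's inequality I would obtain by iterating the one–dimensional $L^\infty$ bound. Set $G(y):=\int_{\mathbb T}\bigl(|f|^2+|f_x|^2\bigr)\,dx$; the $L^\infty$ estimate in $x$ gives $\|f\|_{L^\infty(\Omega)}^2\le C\sup_{y}G(y)$, while the $L^\infty$ estimate in $y$ gives $\sup_y G\le\int_0^1 G\,dy+\int_0^1|G'|\,dy$. Here $\int_0^1 G\,dy\le\aiminnorm{f}_{H^1}^2$ and, since $G'(y)=2\int_{\mathbb T}(f f_y+f_x f_{xy})\,dx$, we have $\int_0^1|G'|\,dy\le C\aiminnorm{f}_{H^1}\aiminnorm{f}_{H^2}$; combining these and using $\aiminnorm{f}_{H^1}\le\aiminnorm{f}_{H^2}$ yields $\|f\|_\infty^2\le C\aiminnorm{f}_{H^1}\aiminnorm{f}_{H^2}$, which is the stated inequality. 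I expect the main technical obstacle to be the careful bookkeeping of the boundary contributions in the non–periodic $y$–direction — it is precisely the presence or absence of the endpoint values that separates the $H^1$ constant $c_2$ from the $H_0^1$ constant $c_3$ in Ladyzhenskaya's inequalities — together with replacing, in the periodic variable, the usual improper integral ``from $-\infty$'' by a control of the infimum through the mean.
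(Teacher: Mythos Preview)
Your proofs are correct. The paper does not actually prove Lemma~\ref{lem2.1.1}: the author merely records these inequalities as well known and refers the reader to \cite{Tem97}, so there is nothing to compare on the level of argument. Your derivations---Poincar\'e from the one--dimensional fundamental theorem in the $y$ variable, the Sobolev embedding via a bounded extension to $\mathbb R^2$, Ladyzhenskaya by tensorizing the one--dimensional $L^\infty$ bound (with the $H_0^1$ case obtained by dropping the boundary term in $y$ and then invoking the Poincar\'e inequality just proved), and Agmon by iterating the one--dimensional $L^\infty$ estimate through the auxiliary function $G(y)$---are all standard and sound. The only cosmetic remark is that for Agmon's inequality you do not need the hypothesis $f\in H_0^1(\Omega)$ in your argument beyond making $\aiminnorm{f}_{H^1}$ and $\aiminnorm{\nabla f}$ comparable, and in fact your estimate $\sup_y G\le\int G+\int|G'|$ already works in the general $H^1$ case; the paper states the inequality for $f\in H^2\cap H_0^1$ only because that is the setting in which it is applied.
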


\begin{lemma}\label{lem2.1.3}
Let $f,g\in H_0^1(\Omega)$. Then for any $\epsilon>0$:
\begin{equation}\label{eq2.0.3}
\int_\Omega \aiminabs{f}\aiminabs{g}\aiminabs{\nabla g}{\mathrm{d}x}{\mathrm{d}y} \leq \frac{27c_3^8}{64\epsilon^3} \aiminnorm{f}^2\aiminnorm{\nabla f}^2\aiminnorm{g}^2 + \epsilon\aiminnorm{\nabla g}^2.
\end{equation}
\end{lemma}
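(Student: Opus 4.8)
The plan is to bound the trilinear integral by Hölder's inequality, convert the resulting $L^4$-norms into products of $L^2$- and gradient norms by Ladyzhenskaya's inequality, and then split off the term $\epsilon\|\nabla g\|^2$ with a weighted Young's inequality. The appearance of the third power of $1/\epsilon$ in the statement is the signature of this last step and fixes which conjugate exponents must be used: the gradient factor must carry total power $2$ on the right, so the Young exponent acting on it must be $4/3$, whose conjugate is $4$.

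First I would apply Hölder's inequality to the three factors with exponents $4,4,2$ (note $\tfrac14+\tfrac14+\tfrac12=1$), which gives
\[
\int_\Omega |f|\,|g|\,|\nabla g|\,dx\,dy \le \|f\|_4\,\|g\|_4\,\|\nabla g\|.
\]
Since $f,g\in H_0^1(\Omega)$, the second Ladyzhenskaya inequality of Lemma~\ref{lem2.1.1} applies to both $L^4$-norms, yielding $\|f\|_4\le c_3^{1/2}\|f\|^{1/2}\|\nabla f\|^{1/2}$ and $\|g\|_4\le c_3^{1/2}\|g\|^{1/2}\|\nabla g\|^{1/2}$. Substituting these and collecting the powers of $\|\nabla g\|$ produces
\[
\int_\Omega |f|\,|g|\,|\nabla g|\,dx\,dy \le c_3\,\|f\|^{1/2}\|\nabla f\|^{1/2}\|g\|^{1/2}\,\|\nabla g\|^{3/2}.
\]

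The last step is a weighted Young's inequality applied to the product of $c_3\|f\|^{1/2}\|\nabla f\|^{1/2}\|g\|^{1/2}$ and $\|\nabla g\|^{3/2}$, using the conjugate pair $q=4/3$ on the gradient factor and $p=4$ on the remainder. Choosing the Young weight so that the $\|\nabla g\|^2$ term carries coefficient exactly $\epsilon$ forces the fourth power of that weight to equal $27/(64\epsilon^3)$, which is precisely where both the factor $\epsilon^{-3}$ and the numerical constant $27/64$ come from; raising $c_3\|f\|^{1/2}\|\nabla f\|^{1/2}\|g\|^{1/2}$ to the fourth power then returns $\|f\|^2\|\nabla f\|^2\|g\|^2$ together with a power of $c_3$, giving a bound of exactly the advertised shape. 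I do not anticipate any real obstacle, since every ingredient is already recorded in Lemma~\ref{lem2.1.1} and the only care needed is the exponent bookkeeping that sends $\|\nabla g\|$ to total power $2$. The single point worth a word is the precise constant: the direct computation naturally yields a coefficient of the form $C\,c_3^4\,\epsilon^{-3}$, and the displayed constant $\tfrac{27c_3^8}{64}\,\epsilon^{-3}$ is simply a convenient majorant of it (in the usual normalization $c_3\ge 1$); as the estimate is only ever used as an upper bound, this over-estimation is harmless.
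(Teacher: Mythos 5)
Your proposal is correct and follows essentially the same route as the paper's own proof: H\"older's inequality with exponents $(4,4,2)$, the $H_0^1$ Ladyzhenskaya inequality on both $L^4$ factors to reach $c_3\aiminnorm{f}^{1/2}\aiminnorm{\nabla f}^{1/2}\aiminnorm{g}^{1/2}\aiminnorm{\nabla g}^{3/2}$, and then a weighted Young inequality with conjugate exponents $4$ and $4/3$ to peel off $\epsilon\aiminnorm{\nabla g}^2$. Your closing remark about the constant is also consistent with the paper, which likewise absorbs the exact numerical outcome into the stated (non-sharp) coefficient $\tfrac{27c_3^8}{64\epsilon^3}$ and simply invokes ``Young's inequality'' for the final step.
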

\begin{proof}
By H\"older's inequality and Ladyzhenskaya's inequality, one has
\[
\int_\Omega \aiminabs{f}\aiminabs{g}\aiminabs{\nabla g}{\mathrm{d}x}{\mathrm{d}y}
\leq \aiminnorm{f}_4\aiminnorm{g}_4\aiminnorm{\nabla g}
\leq c_3^2\aiminnorm{f}^{1/2}\aiminnorm{\nabla f}^{1/2}\aiminnorm{g}^{1/2}\aiminnorm{\nabla g}^{3/2}.
\]
We can then obtain the inequality \eqref{eq2.0.3} by using Young's inequality.
We will choose $\epsilon>0$ appropriately when applying Lemma~\ref{lem2.1.3} in the following context.
\end{proof}
\begin{lemma}\label{lem2.1.4}
Let $f,g\in H^1(\Omega)$. Then for any $\epsilon>0$:
\begin{equation*}
	\aiminnorm{fg}^2=\int_\Omega \aiminabs{f}^2\aiminabs{g}^2{\mathrm{d}x}{\mathrm{d}y}
	\leq c_2^2\aiminnorm{f}\aiminnorm{f}_{H^1}\aiminnorm{g}\aiminnorm{ g}_{H^1}
	\leq \begin{cases}
	\displaystyle	c_2^4 \aiminnorm{f}^2\aiminnorm{f}^2_{H^1} + c_2^4 \aiminnorm{g}^2\aiminnorm{ g}^2_{H^1},\\
	\displaystyle	\frac{c_2^4}{4\epsilon} \aiminnorm{f}^2\aiminnorm{f}^2_{H^1}\aiminnorm{g}^2 + \epsilon \aiminnorm{ g}_{H^1}^2.
	\end{cases}
\end{equation*}
In the case when $f,g\in H^1_0(\Omega)$, we have
\begin{equation*}
	\aiminnorm{fg}^2=\int_\Omega \aiminabs{f}^2\aiminabs{g}^2{\mathrm{d}x}{\mathrm{d}y}
	\leq \begin{cases}
	\displaystyle	c_2^4 \aiminnorm{f}^2\aiminnorm{\nabla f}^2 + c_2^4 \aiminnorm{g}^2\aiminnorm{\nabla g}^2,\\
	\displaystyle	\frac{c_2^4}{4\epsilon} \aiminnorm{f}^2\aiminnorm{\nabla f}^2\aiminnorm{g}^2 + \epsilon \aiminnorm{ \nabla g}^2.
	\end{cases}
\end{equation*}
\end{lemma}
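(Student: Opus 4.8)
The plan is to reduce everything to a single application of Hölder's inequality followed by Ladyzhenskaya's inequality from Lemma~\ref{lem2.1.1}, and then to split the resulting product by Young's inequality in two different ways to produce the two alternative right-hand sides. No clever idea is required; this is purely a matter of assembling named inequalities in the right order.

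First I would write, by the Cauchy--Schwarz inequality (equivalently Hölder with conjugate exponents $2,2$ applied to $\aiminabs{f}^2$ and $\aiminabs{g}^2$),
\[
\aiminnorm{fg}^2 = \int_\Omega \aiminabs{f}^2\aiminabs{g}^2\,{\mathrm{d}x}{\mathrm{d}y} \leq \aiminnorm{f}_4^2\,\aiminnorm{g}_4^2 .
\]
Applying the first Ladyzhenskaya inequality $\aiminnorm{f}_4^2\leq c_2\aiminnorm{f}\aiminnorm{f}_{H^1}$ to each factor gives at once the central estimate $\aiminnorm{fg}^2 \leq c_2^2 \aiminnorm{f}\aiminnorm{f}_{H^1}\aiminnorm{g}\aiminnorm{g}_{H^1}$, which is the first inequality in the statement. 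To obtain the two bounds on the right I set $A=\aiminnorm{f}\aiminnorm{f}_{H^1}$ and $B=\aiminnorm{g}\aiminnorm{g}_{H^1}$. The symmetric bound follows from $c_2^2 AB \leq \tfrac{c_2^2}{2}(A^2+B^2) \leq c_2^4(A^2+B^2)$, where the last step uses that the Ladyzhenskaya constant may be taken $\geq 1$ (absorbing the harmless factor). The $\epsilon$-bound follows instead from the weighted Young inequality $ab\leq \tfrac{a^2}{4\epsilon}+\epsilon b^2$ applied with $a=c_2^2\aiminnorm{f}\aiminnorm{f}_{H^1}\aiminnorm{g}$ and $b=\aiminnorm{g}_{H^1}$, which yields exactly $\tfrac{c_2^4}{4\epsilon}\aiminnorm{f}^2\aiminnorm{f}_{H^1}^2\aiminnorm{g}^2 + \epsilon\aiminnorm{g}_{H^1}^2$.

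The $H_0^1$ case is entirely analogous: I would invoke the second Ladyzhenskaya inequality $\aiminnorm{f}_4^2\leq c_3\aiminnorm{f}\aiminnorm{\nabla f}$ (or keep the first one and use Poincaré to replace $\aiminnorm{\cdot}_{H^1}$ by $\aiminnorm{\nabla\cdot}$), so that $\aiminnorm{f}_{H^1},\aiminnorm{g}_{H^1}$ become $\aiminnorm{\nabla f},\aiminnorm{\nabla g}$ throughout, and then repeat the same two Young splittings to get the two displayed bounds. Since every step is a direct quotation of an inequality already recorded in Lemma~\ref{lem2.1.1}, I do not anticipate any genuine obstacle. The only care needed is the bookkeeping of constants — in particular, deciding which Ladyzhenskaya constant to carry and absorbing the numerical factors from Young's inequality into a single constant (taken $\geq 1$) — and choosing, for each target, the unweighted versus the $\epsilon$-weighted form of Young's inequality.
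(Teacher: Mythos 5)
Your proof is correct and follows exactly the route the paper indicates for this lemma: H\"older's inequality, then Ladyzhenskaya's inequality from Lemma~\ref{lem2.1.1}, then Young's inequality in its unweighted and $\epsilon$-weighted forms. The only caveat you raise --- which Ladyzhenskaya constant ($c_2$ vs.\ $c_3$, or $c_2$ plus Poincar\'e) to carry in the $H_0^1(\Omega)$ case, and the harmless normalization $c_2\geq 1$ needed to write the symmetric bound with $c_2^4$ --- is a looseness already present in the paper's own statement, and you handle it appropriately.
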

The proof of Lemma~\ref{lem2.1.4} can also be achieved by using H\"older's inequality, Ladyzhenskaya's inequality, and Young's inequality.

\begin{lemma}\label{lem2.1.5}
	Let $f,g\in H^1(\Omega)$. Then for any $\epsilon>0$:
\[
\aiminnorm{fg}_{H^1}^2 \leq \max\big(c_1^4(4), \frac{c_2c_1^2(4)}{\epsilon}\big) \aiminnorm{f}_{H^1}^2\aiminnorm{g}_{H^1}^2\big(1+\aiminnorm{f}_{H^1}^2+\aiminnorm{g}_{H^1}^2\big) + \epsilon(\aiminnorm{f}_{H^2}^2+\aiminnorm{g}_{H^2}^2),
\]
where the constant $c_1(4)$ is the Sobolev embedding norm appearing in Lemma~\ref{lem2.1.1}.
\end{lemma}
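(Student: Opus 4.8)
The plan is to control $\|fg\|_{H^1}^2$ by splitting it into its zeroth-order and first-order parts, $\|fg\|_{H^1}^2 = \|fg\|^2 + \|\nabla(fg)\|^2$, and to estimate each piece so that the only contributions carrying the full $H^2$-norms of $f$ and $g$ can be thrown onto the small term $\epsilon(\|f\|_{H^2}^2 + \|g\|_{H^2}^2)$ by Young's inequality, leaving behind a polynomial in the $H^1$-norms. Every tool needed is already collected in Lemma~\ref{lem2.1.1} (H\"older, the Sobolev embedding $\|\cdot\|_4 \le c_1(4)\|\cdot\|_{H^1}$, and Ladyzhenskaya's inequality), together with Young's inequality.

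For the zeroth-order piece I would apply H\"older to get $\|fg\|^2 \le \|f\|_4^2\|g\|_4^2$ and then the Sobolev embedding, producing $c_1^4(4)\|f\|_{H^1}^2\|g\|_{H^1}^2$; this is exactly the summand carrying the constant $c_1^4(4)$ and the ``$1$'' inside the factor $(1+\|f\|_{H^1}^2+\|g\|_{H^1}^2)$. For the first-order piece I would expand $\nabla(fg)=f\nabla g + g\nabla f$ and treat the two integrals $\int_\Omega|f|^2|\nabla g|^2$ and $\int_\Omega|g|^2|\nabla f|^2$ separately. Taking the second one, H\"older gives $\int_\Omega|g|^2|\nabla f|^2 \le \|g\|_4^2\|\nabla f\|_4^2$; I then use the Sobolev embedding on $\|g\|_4^2$ and, crucially, Ladyzhenskaya's inequality on the gradient factor, $\|\nabla f\|_4^2 \le c_2\|\nabla f\|\,\|\nabla f\|_{H^1} \le c_2\|f\|_{H^1}\|f\|_{H^2}$ (viewing $\nabla f\in H^1(\Omega)$). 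This yields a bound of the form $c_2 c_1^2(4)\|g\|_{H^1}^2\|f\|_{H^1}\|f\|_{H^2}$, and symmetrically $c_2 c_1^2(4)\|f\|_{H^1}^2\|g\|_{H^1}\|g\|_{H^2}$ for the other integral.

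The final step is Young's inequality applied to each mixed product so as to isolate $\|f\|_{H^2}^2$ and $\|g\|_{H^2}^2$ with an arbitrarily small prefactor $\epsilon$, the remaining growth being absorbed into terms of the shape $\|f\|_{H^1}^2\|g\|_{H^1}^2\cdot\|g\|_{H^1}^2$ and $\|f\|_{H^1}^2\|g\|_{H^1}^2\cdot\|f\|_{H^1}^2$ carrying a factor of order $1/\epsilon$; together with the zeroth-order term these assemble into $\|f\|_{H^1}^2\|g\|_{H^1}^2(1+\|f\|_{H^1}^2+\|g\|_{H^1}^2)$, and taking the maximum of the two prefactors $c_1^4(4)$ and $c_2 c_1^2(4)/\epsilon$ gives the claimed inequality. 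I expect the only delicate point to be precisely this gradient piece: since $H^1(\Omega)$ just fails to be an algebra in two dimensions, the products $f\nabla g$ and $g\nabla f$ cannot be controlled by $H^1$-norms alone, so one is forced to spend one more derivative than is available by invoking the $H^2$-norm through Ladyzhenskaya, and the whole purpose of Young's inequality is to render that unavoidable top-order contribution harmless by attaching the free small parameter $\epsilon$, at the price of a higher-degree polynomial in the $H^1$-norms.
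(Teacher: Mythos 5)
Your proposal is correct and follows essentially the same route as the paper: split $\aiminnorm{fg}_{H^1}^2$ into the zeroth-order piece (H\"older plus the Sobolev embedding, giving the $c_1^4(4)\aiminnorm{f}_{H^1}^2\aiminnorm{g}_{H^1}^2$ term) and the gradient pieces $(\nabla f)g$, $f(\nabla g)$, each handled by H\"older, the Sobolev embedding on the undifferentiated factor, Ladyzhenskaya's inequality on the gradient factor, and Young's inequality to push the resulting $H^2$-norm onto the $\epsilon$-term. The identification of the gradient terms as the obstruction ($H^1$ failing to be an algebra in 2D) matches exactly the role these estimates play in the paper.
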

\begin{proof}
We first write
\begin{equation}\begin{split}\label{eq2.0.5}
	\aiminnorm{fg}_{H^1}^2
	= \aiminnorm{fg}^2 + \aiminnorm{(\nabla f)g + f(\nabla g)}^2
	\leq \aiminnorm{fg}^2 + 2\aiminnorm{(\nabla f)g}^2  +2\aiminnorm{ f (\nabla g)}^2.
\end{split}\end{equation}
By H\"older's inequality and the Sobolev embedding, we find
\begin{equation*}
	\aiminnorm{fg}^2  \leq \aiminnorm{f}_4^2\aiminnorm{g}_4^2 \leq c_1^4(4) \aiminnorm{f}_{H^1}^2\aiminnorm{g}_{H^1}^2.
\end{equation*}
By H\"older's inequality, the Sobolev embedding, and Ladyzhenskaya's inequality, we find
\begin{equation*}\begin{split}
	\aiminnorm{(\nabla f)g}^2
	&\leq \aiminnorm{\nabla f}_4^2\aiminnorm{g}_4^2
	\leq c_2c_1^2(4) \aiminnorm{\nabla f}\aiminnorm{\nabla f}_{H^1}  \aiminnorm{g}_{H^1}^2\\
	&\leq \frac{c_2c_1^2(4)}{2\epsilon} \aiminnorm{\nabla f}^2  \aiminnorm{g}_{H^1}^4 + \frac\epsilon2 \aiminnorm{\nabla f}_{H^1}^2\\
	&\leq \frac{c_2c_1^2(4)}{2\epsilon} \aiminnorm{f}_{H^1}^2  \aiminnorm{g}_{H^1}^4 + \frac\epsilon2 \aiminnorm{f}_{H^2}^2;\\
\end{split}\end{equation*}
similarly,
\begin{equation*}
	\aiminnorm{ f(\nabla g)}^2
	\leq  \frac{c_2c_1^2(4)}{2\epsilon} \aiminnorm{ f}_{H^1}^4  \aiminnorm{g}_{H^1}^2 + \frac\epsilon2 \aiminnorm{g}_{H^2}^2.
\end{equation*}
Inserting these estimates into \eqref{eq2.0.5} readily yields the result. This ends the proof of Lemma~\ref{lem2.1.5}.
\end{proof}

\begin{lemma}\label{lem2.1.6}
	Let $f,g,h\in H^1(\Omega)$. Then for any $\epsilon>0$:
	\[
	\int_\Omega\aiminabs{f}\aiminabs{g}\aiminabs{h}{\mathrm{d}x}{\mathrm{d}y} \leq \frac{c_2^4}{8\epsilon}\big( \aiminnorm{f}^{2}\aiminnorm{f}_{H^1}^{2}+\aiminnorm{g}^{2}\aiminnorm{g}_{H^1}^{2}\big) + \epsilon\aiminnorm{h}^2.
	\]
\end{lemma}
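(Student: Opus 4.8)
The plan is to treat $h$ asymmetrically from $f$ and $g$, since the right-hand side of the claimed inequality controls $h$ only in the $L^2$ norm whereas $f$ and $g$ enter through their $H^1$ norms. First I would apply the Cauchy--Schwarz inequality in the form $\int_\Omega \aiminabs{f}\aiminabs{g}\aiminabs{h}{\mathrm{d}x}{\mathrm{d}y} \le \aiminnorm{fg}\,\aiminnorm{h}$, viewing the integrand as the product of $\aiminabs{fg}$ and $\aiminabs{h}$. This immediately isolates the factor $\aiminnorm{h}$ that must ultimately be absorbed into the $\epsilon\aiminnorm{h}^2$ term on the right.

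Next I would use Young's inequality $ab \le \frac{1}{4\epsilon}a^2 + \epsilon b^2$ with $a = \aiminnorm{fg}$ and $b = \aiminnorm{h}$, which gives
\[
\int_\Omega \aiminabs{f}\aiminabs{g}\aiminabs{h}{\mathrm{d}x}{\mathrm{d}y} \le \frac{1}{4\epsilon}\aiminnorm{fg}^2 + \epsilon\aiminnorm{h}^2.
\]
The only remaining task is to estimate $\aiminnorm{fg}^2$, and here the $H^1$ version of Lemma~\ref{lem2.1.4} applies directly, giving $\aiminnorm{fg}^2 \le c_2^2\aiminnorm{f}\aiminnorm{f}_{H^1}\aiminnorm{g}\aiminnorm{g}_{H^1}$. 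A further application of Young's inequality (with $p = \aiminnorm{f}\aiminnorm{f}_{H^1}$ and $q = \aiminnorm{g}\aiminnorm{g}_{H^1}$, using $pq \le \tfrac12(p^2+q^2)$) symmetrizes this into $\tfrac{c_2^2}{2}\big(\aiminnorm{f}^2\aiminnorm{f}_{H^1}^2 + \aiminnorm{g}^2\aiminnorm{g}_{H^1}^2\big)$. Substituting back into the previous display produces the claimed bound, the stated numerical constant $\tfrac{c_2^4}{8\epsilon}$ following from the convention $c_2\ge 1$ already implicit in the first branch of Lemma~\ref{lem2.1.4}.

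This argument is entirely routine and there is no genuine analytic obstacle; the only points requiring a little care are bookkeeping ones. The first is the decision to peel off $h$ in $L^2$ via Cauchy--Schwarz rather than attempting an $H^1$ estimate for $h$, which the hypotheses neither supply nor require. The second is distributing the mixed product $\aiminnorm{f}\aiminnorm{f}_{H^1}\aiminnorm{g}\aiminnorm{g}_{H^1}$ symmetrically between $f$ and $g$ via Young's inequality, so that the final right-hand side takes exactly the stated form. As with the companion Lemmas~\ref{lem2.1.3}--\ref{lem2.1.5}, everything rests on the H\"older, Ladyzhenskaya (through Lemma~\ref{lem2.1.4}), and Young inequalities gathered in Lemma~\ref{lem2.1.1}.
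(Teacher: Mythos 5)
Your proposal is correct and is essentially the paper's own argument in lightly reordered form: the paper bounds $\int_\Omega\aiminabs{f}\aiminabs{g}\aiminabs{h}\,{\mathrm{d}x}{\mathrm{d}y}\leq \aiminnorm{f}_4\aiminnorm{g}_4\aiminnorm{h}$ by H\"older, then applies Ladyzhenskaya and Young, which is exactly what your Cauchy--Schwarz + Lemma~\ref{lem2.1.4} + Young chain unwinds to, since Lemma~\ref{lem2.1.4} is itself H\"older plus Ladyzhenskaya. The minor constant discrepancy you note ($c_2^2$ versus $c_2^4$) is harmless --- your bound is in fact sharper, and the paper's own write-up is equally loose with this constant --- so there is no gap.
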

\begin{proof}
By H\"older's inequality, Ladyzhenskaya's inequality, and Young's inequality, we find
\begin{equation*}\begin{split}
	\int_\Omega\aiminabs{f}\aiminabs{g}\aiminabs{h}{\mathrm{d}x}{\mathrm{d}y} \leq \aiminnorm{f}_4\aiminnorm{g}_4\aiminnorm{h}
	&\leq c_2^2\aiminnorm{f}^{1/2}\aiminnorm{f}_{H^1}^{1/2}\aiminnorm{g}^{1/2}\aiminnorm{g}_{H^1}^{1/2}\aiminnorm{h}\\
	&\leq \frac{c_2^4}{8\epsilon}\big( \aiminnorm{f}^{2}\aiminnorm{f}_{H^1}^{2}+\aiminnorm{g}^{2}\aiminnorm{g}_{H^1}^{2}\big) + \epsilon\aiminnorm{h}^2.
\end{split}\end{equation*}
This completes the proof of Lemma~\ref{lem2.1.6}.
\end{proof}

Finally, we recall the Uniform Gronwall Lemma which is the key to prove the uniform estimates and is used extensively in this section. One can refer to \cite[pp. 91]{Tem97} and \cite{FP67} for its proof.
\begin{lemma}[Uniform Gronwall Lemma]\label{lem2.1.2}
Let $g,h$ and $y$ be three non-negative locally integrable functions on $(t_0,+\infty)$ such that
\[
\frac{{\mathrm{d}y}}{{\mathrm{d}t}}\leq gy+h,\quad\forall\,t\geq t_0,
\]
and
\[
\int_t^{t+\gamma}g(s)\,\text{d}s \leq a_1,\quad \int_t^{t+\gamma}h(s)\,\text{d}s \leq a_2,\quad \int_t^{t+\gamma}y(s)\,\text{d}s \leq a_3,\quad\forall\,t\geq t_0,
\]
where $\gamma$, $a_1$, $a_2$ and $a_3$ are positive constants. Then
\[
y(t)\leq \big(\frac{a_3}{\gamma}+a_2)e^{a_1},\quad\forall\,t\geq t_0+\gamma.
\]
\end{lemma}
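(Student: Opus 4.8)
The plan is to combine the classical integrating-factor device for the scalar differential inequality with an averaging argument over the sliding window $[t-\gamma,t]$. Fix an arbitrary $t\geq t_0+\gamma$ and treat $s$ as a free parameter ranging over $[t-\gamma,t]\subseteq[t_0,+\infty)$. Starting from $\mathrm{d}y/\mathrm{d}\tau\leq gy+h$, I would multiply by the integrating factor $\exp\!\bigl(-\int_s^\tau g(\sigma)\,\mathrm{d}\sigma\bigr)$ to get
\[
\frac{\mathrm{d}}{\mathrm{d}\tau}\Bigl(y(\tau)\exp\Bigl(-\int_s^\tau g\,\mathrm{d}\sigma\Bigr)\Bigr)\leq h(\tau)\exp\Bigl(-\int_s^\tau g\,\mathrm{d}\sigma\Bigr),
\]
and integrate this in $\tau$ from $s$ to $t$.

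Since $g\geq0$, the integrating factor satisfies $\exp(-\int_s^\tau g)\leq1$ on $[s,t]$, so after integrating and rearranging I obtain
\[
y(t)\leq\Bigl(y(s)+\int_s^t h(\tau)\,\mathrm{d}\tau\Bigr)\exp\Bigl(\int_s^t g\,\mathrm{d}\tau\Bigr).
\]
Because $[s,t]\subseteq[t-\gamma,t]$ and all three functions are non-negative, the window bounds applied at the left endpoint $t-\gamma\geq t_0$ give $\int_s^t g\leq\int_{t-\gamma}^t g\leq a_1$ and $\int_s^t h\leq a_2$. Hence
\[
y(t)\leq\bigl(y(s)+a_2\bigr)e^{a_1},\qquad\forall\,s\in[t-\gamma,t].
\]

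The final step is to average this inequality in $s$ over $[t-\gamma,t]$. The left-hand side integrates to $\gamma\,y(t)$, while on the right the hypothesis $\int_{t-\gamma}^{t}y(s)\,\mathrm{d}s\leq a_3$ (the bound on $y$ for the window beginning at $t-\gamma$) controls the integral of $y(s)$. This yields $\gamma\,y(t)\leq(a_3+\gamma a_2)e^{a_1}$, which is exactly the claimed estimate $y(t)\leq(a_3/\gamma+a_2)e^{a_1}$ for all $t\geq t_0+\gamma$.

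The one point requiring care is justifying the integrating-factor differentiation when $y$, $g$, $h$ are only locally integrable: I would read the differential inequality in its equivalent integral form, so that $\tau\mapsto y(\tau)\exp(-\int_s^\tau g)$ is absolutely continuous and the fundamental theorem of calculus applies almost everywhere; alternatively one establishes the displayed inequality directly at the level of integrals, avoiding pointwise differentiation altogether. Everything else is elementary, resting only on $g\geq0$, the monotonicity of the integral under nested windows, and the averaging in $s$.
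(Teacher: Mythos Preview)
Your proof is correct and is exactly the standard argument: the paper itself does not reproduce a proof but refers to \cite[p.~91]{Tem97} and \cite{FP67}, and the integrating-factor estimate followed by averaging in $s$ over $[t-\gamma,t]$ that you outline is precisely the proof given there. Your closing remark on interpreting the differential inequality in integral form to handle the locally integrable setting is the appropriate caveat and would be accepted without further elaboration.
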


\subsection{Maximum principle}
We need a variant of the maximum principle for $\theta$ and for this purpose we introduce the truncation operators that associate with a function $\psi$, the functions $\psi_+$ and $\psi_-$:
\[
\psi_+(x,y) = \max(\psi(x,y), 0),\quad\quad \psi_-(x,y)=\max(-\psi(x,y), 0).
\]
\begin{proposition}\label{prop2.2.1}
For $p\geq 2$, let $\theta\in L^\infty(0,  t_1; L^p(\Omega))\cap L^2(0, t_1; H_0^1(\Omega))$ and $u\in L^2(0, t_1; V_1)$  be a weak solution of \eqref{eq1.1.3}-\eqref{eq1.1.4}.
We additionally assume that
\begin{equation}\label{eq2.1.4}
	-1\leq \theta_0(x,y)\leq 1,\quad \text{a.e. } (x,y)\in\Omega.
\end{equation}
Then
\begin{equation}\label{eq2.1.5}
	-1\leq \theta(t,x,y) \leq 1,\quad \text{a.e. }(x,y)\in\Omega,\text{ a.e. }t\geq 0.
\end{equation}

If $\aiminset{\boldsymbol u,\theta}$ are defined for all $t>0$ and \eqref{eq2.1.4} is not assumed, then
\begin{equation}\label{eq2.1.5-6}
	\theta(t,x,y)=\tilde\theta(t,x,y) + \bar\theta(t,x,y),
\end{equation}
where $-1\leq\tilde\theta(t,x,y)\leq 1$ a.e., and
\begin{equation}\label{eq2.1.6}
	\aiminnorm{\bar\theta(t)}_p \leq \big( \aiminnorm{(\theta-1)_+(0)}_p + \aiminnorm{(\theta+1)_-(0)}_p \big) \exp\big( -\frac{4(p-1)}{p^2}\underline\kappa t\big).
\end{equation}
\end{proposition}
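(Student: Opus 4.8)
The plan is to use the standard truncation (Stampacchia) energy method, exploiting that the Dirichlet condition $\theta|_{y=0,1}=0$ forces each truncation $(\theta-1)_+$ and $(\theta+1)_-$ to vanish on $\partial\Omega$, so that every boundary term produced by integration by parts drops. I would first note that the qualitative statement \eqref{eq2.1.5} is the special case of the quantitative one \eqref{eq2.1.6} with vanishing initial overflow: under \eqref{eq2.1.4} one has $(\theta_0-1)_+=(\theta_0+1)_-=0$, so \eqref{eq2.1.6} forces $\bar\theta\equiv0$ and hence $\theta=\tilde\theta\in[-1,1]$. It therefore suffices to prove the decay estimate \eqref{eq2.1.6}. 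I would treat the two truncations symmetrically, carrying out the argument for $\phi:=(\theta-1)_+$ and indicating the obvious sign changes for $(\theta+1)_-$.

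For the $L^p$ estimate I would test the temperature equation in \eqref{eq1.1.3} against $\phi^{p-1}$, which is admissible since $\phi\in H^1_0(\Omega)$ and $\phi^{p-1}$ vanishes on $\partial\Omega$. The time derivative produces $\tfrac1p\tfrac{d}{dt}\|\phi\|_p^p$; the convective term $\int_\Omega(\boldsymbol u\cdot\nabla\theta)\phi^{p-1}=\tfrac1p\int_\Omega\boldsymbol u\cdot\nabla(\phi^p)$ vanishes after integrating by parts, using ${\mathrm{div}\,}\boldsymbol u=0$ together with $\boldsymbol u\cdot\boldsymbol n=0$ on $\partial\Omega$; and the diffusion term, after integration by parts and the lower bound \eqref{asp1.1.1}, is bounded below by $(p-1)\underline\kappa\int_\Omega\phi^{p-2}|\nabla\phi|^2=\tfrac{4(p-1)}{p^2}\underline\kappa\,\|\nabla(\phi^{p/2})\|^2$, via the pointwise identity $|\nabla(\phi^{p/2})|^2=\tfrac{p^2}{4}\phi^{p-2}|\nabla\phi|^2$. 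Since $\phi^{p/2}\in H^1_0(\Omega)$, Poincaré's inequality from Lemma~\ref{lem2.1.1} gives $\|\phi^{p/2}\|^2\le\|\nabla(\phi^{p/2})\|^2$, i.e. $\|\nabla(\phi^{p/2})\|^2\ge\|\phi\|_p^p$. Collecting these I would reach $\tfrac{d}{dt}\|\phi\|_p^p+\tfrac{4(p-1)}{p}\underline\kappa\|\phi\|_p^p\le0$, and Gronwall's lemma followed by taking $p$-th roots yields exactly the rate $\exp(-\tfrac{4(p-1)}{p^2}\underline\kappa t)$ of \eqref{eq2.1.6}.

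The hard part is the pair of lower-order terms $-u_2$ and the right-hand side $-\kappa'(\theta)\theta_y$, which do not vanish termwise after testing and which is precisely where the temperature dependence of $\kappa$ enters. My plan is to exploit the conservation structure inherited from the physical variable $T=\theta+(1-y)$: one has $\boldsymbol u\cdot\nabla\theta-u_2=\boldsymbol u\cdot\nabla(\theta-y)$ and $-{\mathrm{div}\,}(\kappa(\theta)\nabla\theta)+\kappa'(\theta)\theta_y=-{\mathrm{div}\,}(\kappa(\theta)\nabla(\theta-y))$, so that these terms reassemble into the transport--diffusion of $\theta-y$ in divergence form; the crux is then to show that, after one further integration by parts using $\phi|_{\partial\Omega}=0$ and ${\mathrm{div}\,}\boldsymbol u=0$, their net contribution against $\phi^{p-1}$ does not destroy the sign, so that the clean inequality above survives. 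This is the delicate step where I expect the growth controls \eqref{asp1.1.2}--\eqref{asp1.1.3} on $\nu,\kappa$ to be needed. Once \eqref{eq2.1.6} holds for each truncation, the splitting \eqref{eq2.1.5-6} follows by setting $\bar\theta=(\theta-1)_+-(\theta+1)_-$ and $\tilde\theta=\theta-\bar\theta$: the two truncations have disjoint supports $\{\theta>1\}$ and $\{\theta<-1\}$, so $\|\bar\theta\|_p^p=\|(\theta-1)_+\|_p^p+\|(\theta+1)_-\|_p^p$ and $\tilde\theta$ is just $\theta$ clamped to $[-1,1]$, whence $-1\le\tilde\theta\le1$. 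A last technical point I would address is the rigorous justification of the chain rule for the truncations and of the admissibility of $\phi^{p-1}$ as a test function at the regularity $\theta\in L^\infty(0,t_1;L^p)\cap L^2(0,t_1;H^1_0)$, which is standard by approximating $s\mapsto(s-1)_+^{p-1}$ by smooth bounded functions and passing to the limit.
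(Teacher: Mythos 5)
Your setup of the main terms (time derivative, convection, diffusion, Poincar\'e, Gronwall) coincides with the paper's computation, but the step you explicitly defer is precisely where the proof lives, and as you have set it up it fails. With the flat truncation $\phi=(\theta-1)_+$, the term $\int_\Omega\kappa'(\theta)\theta_y\,\phi^{p-1}\,\mathrm{d}x\,\mathrm{d}y$ is in fact harmless: writing it as $\int_\Omega\partial_y[G(\theta)]\,\mathrm{d}x\,\mathrm{d}y$ with $G(\tau)=\int_0^{\tau}\kappa'(s)\,(s-1)_+^{p-1}\,\mathrm{d}s$, it vanishes because $\theta|_{y=0,1}=0$. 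The fatal term is the buoyancy term $-\int_\Omega u_2\,\phi^{p-1}\,\mathrm{d}x\,\mathrm{d}y$: against the flat truncation it is not an exact divergence, it has no sign, and it cannot be absorbed into the dissipation or into $\|\phi\|_p^p$, since $\boldsymbol u$ is only known to lie in $L^2(0,t_1;V_1)$ and carries no smallness. Your expectation that the growth assumptions \eqref{asp1.1.2}--\eqref{asp1.1.3} are what is needed to control this step is also off target: the paper's proof of this proposition never uses them; it uses only the lower bound $\kappa\ge\underline\kappa$ from \eqref{asp1.1.1}.

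The missing idea is that the truncation must be tilted along with the equation. You already wrote down the key reassembly $\boldsymbol u\cdot\nabla\theta-u_2=\boldsymbol u\cdot\nabla(\theta-y)$ and $-\mathrm{div}\,(\kappa(\theta)\nabla\theta)+\kappa'(\theta)\theta_y=-\mathrm{div}\,(\kappa(\theta)\nabla(\theta-y))$; equivalently, $T=\theta+(1-y)$ solves the source-free equation \eqref{eq1.1.1}$_3$. The paper accordingly truncates $T$ at the levels $1$ and $0$: it works with $(T-1)_+=(\theta-y)_+$ and $T_-=(\theta+1-y)_-$, not with $(\theta-1)_+$ and $(\theta+1)_-$. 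These tilted truncations still vanish on both boundaries (since $T=1$ at $y=0$ and $T=0$ at $y=1$), so they are admissible $H_0^1$ test functions, and now everything cancels exactly: the convection term becomes $\frac1p\int_\Omega\boldsymbol u\cdot\nabla[(\theta-y)_+^{p}]\,\mathrm{d}x\,\mathrm{d}y=0$, the diffusion term is coercive with no leftover pieces, and your Gronwall argument closes verbatim with the stated rate. Two consequences you should then propagate: the decomposition is $\bar\theta=(T-1)_+-T_-$, so $\tilde\theta$ is $\theta$ clamped to $[-(1-y),y]\subset[-1,1]$, not to $[-1,1]$; and your reduction of the first claim to the second no longer works as stated, because \eqref{eq2.1.4} controls $(\theta_0-1)_+$ but not $(T_0-1)_+=(\theta_0-y)_+$ --- the maximum principle \eqref{eq2.1.5} is instead obtained by running the same monotonicity argument directly on $(T-1)_+$ and $T_-$, whose norms are nonincreasing and vanish initially, rather than as a corollary of the decay estimate for flat truncations.
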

\begin{proof}
Although the arguments below follow line by line as in \cite[pp. 136 - 137]{Tem97} and it is useful to briefly recall them here because of the dependence of $\kappa$ on $T$. The results are proved naturally on the equation for $T$, that is \eqref{eq1.1.1}$_3$. In terms of $T$, \eqref{eq2.1.4} and \eqref{eq2.1.5} amount to
\begin{equation}\label{eq2.1.7}
\begin{split}
	&0\leq T(0,x,y)\leq 1,\quad\text{a.e. }(x,y)\in\Omega,\\
	&0\leq T(t,x,y)\leq 1,\quad\text{a.e. }(x,y)\in\Omega,\quad\text{a.e. }t.
\end{split}
\end{equation}
In order to establish the second inequality in \eqref{eq2.1.7}$_2$, we observe that $(T-1)_+$ belongs to $L^2(0, t_1; H_0^1(\Omega))$; hence, we multiply \eqref{eq1.1.1}$_3$ by $(T-1)_+\aiminabs{(T-1)_+}^{p-2}$ and integrate over $\Omega$; we arrive, after utilization of Green's formula, at
\begin{equation}\label{eq2.1.8}
	\frac 1 p\frac{\mathrm{d}}{\mathrm{d}t} \aiminnorm{(T-1)_+}_p^p + \int_\Omega \kappa(T)(p-1) \aiminabs{(T-1)_+}^{p-2} \aiminabs{\nabla ((T-1)_+)}^2 {\mathrm{d}x}{\mathrm{d}y} = 0.
\end{equation}
We set $g=\aiminabs{(T-1)_+}^{p/2-1}(T-1)_+$; then by direct computations, we find
\[
\aiminnorm{g}^2 = \aiminnorm{(T-1)_+}_p^p,\quad\quad \nabla g=\frac p 2\aiminabs{(T-1)_+}^{p/2-1}\nabla ((T-1)_+).
\]
Using the assumption $\kappa(T)\geq \underline\kappa$, we infer from \eqref{eq2.1.8} that
\[
\frac{\mathrm{d}}{\mathrm{d}t} \aiminnorm{g}^2 + \underline\kappa\frac{4(p-1)}{p}	\aiminnorm{\nabla g}^2\leq 0,
\]
which, by using the Poincar\'e's inequality, implies that
\begin{equation}\label{eq2.1.9}
	\frac{\mathrm{d}}{\mathrm{d}t} \aiminnorm{g}^2 + \underline\kappa\frac{4(p-1)}{p}	\aiminnorm{g}^2\leq 0.
\end{equation}
From \eqref{eq2.1.9}, we observe that $\aiminnorm{g(t)}$ is a decreasing function of $t$ that vanishes at $t=0$ and, therefore, it vanishes for all later time $t>0$; thus $T(t,x,y)\leq 1$ for all $t\geq 0$. For the proof of the first inequality in \eqref{eq2.1.7}$_2$, we consider $T_-$ and proceed similarly.

If we do not assume \eqref{eq2.1.4}, we conclude from \eqref{eq2.1.9} that $\aiminnorm{g(t)}$ decreases exponentially,
\[
\aiminnorm{g(t)}^2 \leq \aiminnorm{g(0)}^2 \exp\big( -\frac{4(p-1)}{p}\underline\kappa t\big),
\]
which is equivalent to
\begin{equation}
	\aiminnorm{ (T-1)_+(t)}_p \leq \aiminnorm{ (T-1)_+(0)}_p \exp\big( -\frac{4(p-1)}{p^2}\underline\kappa t\big).
\end{equation}
Similarly, we can prove that
\begin{equation}
	\aiminnorm{ T_-(t)}_p \leq \aiminnorm{ T_-(0)}_p \exp\big( -\frac{4(p-1)}{p^2}\underline\kappa t\big).
\end{equation}
We then set
\[
T=\tilde T + \bar T,\quad \quad \bar T=(T-1)_+- T_-,
\]
and we see that $0\leq \tilde T\leq 1$ almost everywhere and $\bar T(t,\cdot)\rightarrow 0$ in $L^p(\Omega)$ as $t\rightarrow \infty$:
\begin{equation}
	\aiminnorm{\bar T(t)}_p \leq \big( \aiminnorm{ (T-1)_+(0)}_p + \aiminnorm{ T_-(0)}_p\big) \exp\big( -\frac{4(p-1)}{p^2}\underline\kappa t\big).
\end{equation}
Then \eqref{eq2.1.5-6} and \eqref{eq2.1.6} are just a rephrasing of the results in terms of $\theta$.
\end{proof}

\begin{remark}\label{rmk2.0.1}
	By taking $p\rightarrow+\infty$ in \eqref{eq2.1.6}, we immediately find that
	\[
	\aiminnorm{\bar\theta(t)}_\infty \leq  \aiminnorm{(\theta-1)_+(0)}_\infty + \aiminnorm{(\theta+1)_-(0)}_\infty
	\leq 2\aiminnorm{\theta_0}_\infty + 2.
	\]
	Therefore
	\begin{equation}\label{eq2.1.11}
		\aiminnorm{\theta}_{L^\infty( (0,T)\times\Omega )} \leq \aiminnorm{\tilde\theta}_{L^\infty( (0,T)\times\Omega )} + \aiminnorm{\bar\theta}_{L^\infty( (0,T)\times\Omega )}
		\leq 2\aiminnorm{\theta_0}_\infty + 3.
	\end{equation}
	We remark that the estimate \eqref{eq2.1.11} yields the upper bounds for $\nu(\theta)$ and $\kappa(\theta)$ and also their derivatives. These bounds will ensure us to obtain the global well-posedness of the strong solutions. But as indicated in the introduction, since we are more interested in the global attractor, the dependence on the initial data $\theta_0$ for the estimate \eqref{eq2.1.11} does not suit our goal. To continue towards our objective, we need more delicate estimates as we will show below as well as the assumptions \eqref{asp1.1.1}-\eqref{asp1.1.3}.
\end{remark}

\subsection{$L^p$-estimates}
The uniform $L^p$-estimate for $\theta$ and $L^2$-estimate for $\boldsymbol u$ follow the similar arguments as in \cite[pp. 137 - 138]{Tem97} and we will briefly explain it below. However, at this stage, we can not prove the uniform $L^p$-estimate for $\boldsymbol u$ because of the pressure term in the velocity equation.

\begin{proposition}[Existence of absorbing balls in $L^2$ and $L^p$]\label{prop2.3.1}
Under the assumptions of Theorem~\ref{thm1.3}, there exists $ t_0>0$  depending on the initial data $\boldsymbol u_0$ and $\theta_0$ such that
\begin{equation}\label{eq2.2.8}
	\aiminnorm{\boldsymbol u(t)},\;\aiminnorm{\theta(t)}_p \leq C_0 := \frac{1}{\underline\nu} + \frac{\aiminabs{\Omega}^{1/2}}{\underline\nu}+\aiminabs{\Omega}^{1/p}+1,\quad\forall\,t\geq  t_0,
\end{equation}
for all $p$ satisfying $2\leq p\leq p_0<+\infty$ for some $p_0$ large enough (for example, we could take $p_0=10r+10$ where $r$ is the one in assumption \eqref{asp1.1.2}).
\end{proposition}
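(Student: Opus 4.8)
The plan is to treat the two bounds separately, getting the $L^p$-estimate for $\theta$ almost for free from the maximum principle and then deriving the $L^2$-estimate for $\boldsymbol u$ by a standard energy argument driven by the bound just obtained.

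\textbf{The $L^p$-bound on $\theta$.} Proposition~\ref{prop2.2.1} already furnishes the decomposition $\theta=\tilde\theta+\bar\theta$ with $\aiminabs{\tilde\theta}\le 1$ a.e.\ and $\aiminnorm{\bar\theta(t)}_p\le\big(\aiminnorm{(\theta-1)_+(0)}_p+\aiminnorm{(\theta+1)_-(0)}_p\big)\exp(-\tfrac{4(p-1)}{p^2}\underline\kappa t)$. Since $\aiminnorm{\tilde\theta}_p\le\aiminabs{\Omega}^{1/p}$ and the exponential factor tends to $0$, I would choose $t_0$ so large that $\aiminnorm{\bar\theta(t)}_p\le 1$ for all $t\ge t_0$, whence $\aiminnorm{\theta(t)}_p\le\aiminabs{\Omega}^{1/p}+1\le C_0$. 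The only point needing attention is that $t_0$ must serve every $p\in[2,p_0]$ at once; because the decay rate $\tfrac{4(p-1)}{p^2}\underline\kappa$ is bounded below and the initial norms depend continuously on $p$ over the compact interval $[2,p_0]$, a single $t_0$ suffices.

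\textbf{The $L^2$-bound on $\boldsymbol u$.} I would test the momentum equation \eqref{eq1.1.3}$_1$ against $\boldsymbol u$ and integrate over $\Omega$. The convection term $\int_\Omega(\boldsymbol u\cdot\nabla\boldsymbol u)\cdot\boldsymbol u$ and the pressure term $\int_\Omega\nabla p\cdot\boldsymbol u$ both vanish thanks to ${\mathrm{div}\,}\boldsymbol u=0$, the free-slip conditions, and the periodicity in $x$; integrating the viscous term by parts and using $\nu(\theta)\ge\underline\nu$ together with the vanishing of the boundary integral $\int_{\partial\Omega}\nu(\theta)(\partial_{\boldsymbol n}\boldsymbol u)\cdot\boldsymbol u$ under the free-slip conditions yields
\begin{equation*}
\frac12\frac{\mathrm d}{\mathrm dt}\aiminnorm{\boldsymbol u}^2 + \underline\nu\aiminnorm{\nabla\boldsymbol u}^2 \le \int_\Omega \big(\theta+(1-y)\big)u_2\,{\mathrm{d}x}{\mathrm{d}y}.
\end{equation*}
Recognizing $\theta+(1-y)=T$, I control the source through the maximum principle: after the same $t_0$ as above one has $\aiminnorm{T}\le\aiminabs{\Omega}^{1/2}+1$, so by Cauchy--Schwarz the right-hand side is at most $(\aiminabs{\Omega}^{1/2}+1)\aiminnorm{\boldsymbol u}$.

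\textbf{Closing the estimate.} Applying Poincar\'e's inequality $\aiminnorm{\boldsymbol u}\le\aiminnorm{\nabla\boldsymbol u}$ to the coercive term and dividing by $\aiminnorm{\boldsymbol u}$ gives the differential inequality $\tfrac{\mathrm d}{\mathrm dt}\aiminnorm{\boldsymbol u}+\underline\nu\aiminnorm{\boldsymbol u}\le\aiminabs{\Omega}^{1/2}+1$. A classical Gronwall integration then shows $\limsup_{t\to\infty}\aiminnorm{\boldsymbol u(t)}\le\tfrac{\aiminabs{\Omega}^{1/2}+1}{\underline\nu}$, and enlarging $t_0$ if necessary I obtain $\aiminnorm{\boldsymbol u(t)}\le\tfrac1{\underline\nu}+\tfrac{\aiminabs{\Omega}^{1/2}}{\underline\nu}+1\le C_0$ for all $t\ge t_0$, which together with the first part proves \eqref{eq2.2.8}.

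\textbf{Main obstacle.} None of the individual steps is deep; the work lies in the bookkeeping. The genuinely delicate points are the uniform-in-$p$ selection of the absorbing time $t_0$ and the verification that the boundary integral generated by the variable viscosity truly vanishes under the free-slip conditions. It is precisely here that the temperature-dependence of $\nu$, which elsewhere forces much harder estimates, is harmless: the lower bound $\nu(\theta)\ge\underline\nu$ lets me simply discard the viscous contribution after establishing its positivity, so that no growth hypothesis on $\nu$ is needed at this stage.
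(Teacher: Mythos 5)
Your proposal is correct and follows essentially the same path as the paper: the $L^p$-bound on $\theta$ comes straight from the maximum principle (Proposition~\ref{prop2.2.1}), and the $L^2$-bound on $\boldsymbol u$ comes from testing \eqref{eq1.1.3}$_1$ against $\boldsymbol u$, using \eqref{asp1.1.1} and Poincar\'e's inequality, and integrating the resulting linear differential inequality. The only (harmless) deviation is that you invoke Gronwall after time $t_0$ with the constant bound $\aiminnorm{T}\leq \aiminabs{\Omega}^{1/2}+1$, whereas the paper integrates from $t=0$ with the full time-dependent bound \eqref{eq2.2.1}, producing the two-exponential term in \eqref{eq2.2.7} and the special case $\underline\nu=\underline\kappa$; your variant sidesteps that bookkeeping and reaches the same $\limsup$.
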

\begin{proof}
	For any fixed $p\geq 2$, Proposition~\ref{prop2.2.1} already provides an uniform estimate for $\aiminnorm{\theta(t)}_p$:
\begin{equation}\begin{split}\label{eq2.2.1}
	\aiminnorm{\theta(t)}_p &\leq \aiminnorm{\tilde\theta(t)}_p + \aiminnorm{\bar\theta(t)}_p \\
	&\leq \aiminabs{\Omega}^{1/p} + \big( \aiminnorm{(\theta-1)_+(0)}_p + \aiminnorm{(\theta+1)_-(0)}_p \big)\exp\big( -\frac{4(p-1)}{p^2}\underline\kappa t\big).
\end{split}\end{equation}
Hence, we have
\begin{equation}\label{eq2.2.3}
	\limsup_{t\rightarrow\infty} \aiminnorm{\theta(t)}_p \leq \aiminabs{\Omega}^{1/p}.
\end{equation}
This gives us the desired uniform estimate which yields an absorbing ball for $\theta$ in $L^p(\Omega)$.

Taking the inner product of \eqref{eq1.1.3}$_1$ with $\boldsymbol u$ in $L^2(\Omega)$, we classically find the energy estimate
\begin{equation}\label{eq2.2.6}
	\frac12\frac{{\mathrm{d}}}{{\mathrm{d}t}}\aiminnorm{\boldsymbol u}^2  + \int_\Omega\nu(\theta)\aiminabs{\nabla\boldsymbol u}^2{\mathrm{d}x}{\mathrm{d}y}  \leq \aiminnorm{\theta}\aiminnorm{\boldsymbol u} + \aiminnorm{\boldsymbol u},
\end{equation}
which, by using the assumption \eqref{asp1.1.1} and Poincar\'e's inequality, implies that
\[
	\frac12\frac{{\mathrm{d}}}{{\mathrm{d}t}}\aiminnorm{\boldsymbol u}^2  + \underline\nu\aiminnorm{\boldsymbol u}^2 \leq \aiminnorm{\theta}\aiminnorm{\boldsymbol u} + \aiminnorm{\boldsymbol u},
\]
that is
\[
	\frac{{\mathrm{d}}}{{\mathrm{d}t}}\aiminnorm{\boldsymbol u}  + \underline\nu\aiminnorm{\boldsymbol u} \leq \aiminnorm{\theta} + 1.
\]
Therefore, by a direct integration, we find that for all $t\geq 0$:
\begin{equation}\label{eq2.2.7}
\begin{split}
	\aiminnorm{\boldsymbol u(t)} &\leq e^{-\underline\nu t}\aiminnorm{\boldsymbol u_0} + \int_0^t e^{\underline\nu (s-t)}(\aiminnorm{\theta(s)}+1){\mathrm{d}s} \\
	&\leq	e^{-\underline\nu t}\aiminnorm{\boldsymbol u_0} +\big(\frac{1}{\underline\nu} + \frac{\aiminabs{\Omega}^{1/2}}{\underline\nu}\big)(1-e^{-\underline\nu t}) \\
	&\quad +\big( \aiminnorm{(\theta-1)_+(0)}+ \aiminnorm{(\theta+1)_-(0)} \big)\frac{e^{-\underline\kappa t} - e^{-\underline\nu t}}{\underline\nu-\underline\kappa},
\end{split}
\end{equation}
where we have used \eqref{eq2.2.1} for $\aiminnorm{\theta(s)}$ with $p=2$ for the last inequality. Note that in the case when $\underline\nu=\underline\kappa$, the last term in the right-hand side of \eqref{eq2.2.7} should be viewed as a limit when $\underline\nu$ approaches $\underline\kappa$, that is $te^{-\underline\nu t}$. Finally, we find from \eqref{eq2.2.7} that
\begin{equation}\label{eq2.2.8p}
	\limsup_{t\rightarrow \infty}\aiminnorm{\boldsymbol u(t)} \leq \frac{1}{\underline\nu} + \frac{\aiminabs{\Omega}^{1/2}}{\underline\nu}.
\end{equation}
This gives us the desired uniform estimate which yields an absorbing ball for $\boldsymbol u$ in $L^2(\Omega)$.
In conclusion, we obtain \eqref{eq2.2.8} from the estimates \eqref{eq2.2.3} and \eqref{eq2.2.8p}.
\end{proof}

As a consequence of Proposition~\ref{prop2.3.1}, we are going to obtain the control on the time averages of $\aiminnorm{\nabla \boldsymbol u}^2$ and $\aiminnorm{\nabla\theta}^2$.
Integrating \eqref{eq2.2.6} in time on $(t, t+1)$, we find
\[
\int_t^{t+1} \aiminnorm{\sqrt{\nu(\theta)}\nabla\boldsymbol u(s)}^2{\mathrm{d}s} \leq \frac12\aiminnorm{\boldsymbol u(t)}^2 + \int_t^{t+1}\aiminnorm{\theta(s)}\aiminnorm{\boldsymbol u(s)}{\mathrm{d}s} + \int_t^{t+1} \aiminnorm{\boldsymbol u(s)}{\mathrm{d}s},
\]
and by employing the uniform estimates \eqref{eq2.2.8}, we obtain
\begin{equation}\begin{split}\label{eq2.2.10p}
	\int_t^{t+1} \aiminnorm{\sqrt{\nu(\theta)}\nabla\boldsymbol u(s)}^2{\mathrm{d}s} \leq \frac{C_0}{2}\big(3C_0+2\big),\quad\forall\,t\geq  t_0.
\end{split}\end{equation}
In particular, we also have
\begin{equation}\label{eq2.2.10}
	\int_t^{t+1} \aiminnorm{\nabla\boldsymbol u(s)}^2{\mathrm{d}s} \leq \frac{C_0}{2\underline\nu}\big(3C_0+2\big),\quad\forall\,t\geq  t_0.
\end{equation}

Multiplying \eqref{eq1.1.3}$_3$ by $\aiminabs{\theta}^{p-2}\theta$, integrating over $\Omega$, and using the assumptions \eqref{asp1.1.1}-\eqref{asp1.1.2}$_1$, we obtain
\begin{equation*}\begin{split}
	\frac1p\frac{{\mathrm{d}}}{{\mathrm{d}t}}\aiminnorm{\theta}_p^p + &\underline\kappa (p-1)\int_\Omega \aiminabs{\theta}^{p-2}\aiminabs{ \nabla\theta}^2 {\mathrm{d}x}{\mathrm{d}y}\\
	&\leq \aiminnorm{u_2}\aiminnorm{\aiminabs{\theta}^{p-1}} + c_0\aiminnorm{\aiminabs{\theta}^{p/2-1}\aiminabs{\theta_y}}\big(\aiminnorm{\aiminabs{\theta}^{p/2}} + \aiminnorm{\aiminabs{\theta}^{p/2+r}}\big)\\
	&\leq \aiminnorm{\boldsymbol u}\aiminnorm{\theta}_{2p-2}^{p-1} + \frac{\underline\kappa}{4}\aiminnorm{\aiminabs{\theta}^{p/2-1}\nabla\theta}^2 + \frac{c_0^2}{\underline\kappa}\big(\aiminnorm{\theta}_p^{p/2} + \aiminnorm{\theta}_{p+2r}^{p/2+r}\big)^2,
\end{split}\end{equation*}
which, after integrating in time on $(t,t+1)$ and using the uniform estimates \eqref{eq2.2.8}, implies the control of the time average of $\aiminnorm{\aiminabs{\theta}^{p/2-1}\nabla\theta}^2$:
\begin{equation}\label{eq2.2.12}
	\int_t^{t+1}\int_\Omega \aiminabs{\theta}^{p-2}\aiminabs{ \nabla\theta}^2 {\mathrm{d}x}{\mathrm{d}y}{\mathrm{d}s} \leq \frac{2}{\underline\kappa(p-1)}\bigg[\big(\frac1p+1\big)C_0^p + \frac{c_0^2}{\underline\kappa}\big(C_0^{p/2} + C_0^{p/2+r}\big)^2\bigg],\;\;\forall\,t\geq  t_0.
\end{equation}
In the particular case when $p=2$, we have
\begin{equation}\label{eq2.2.5}
	\int_t^{t+1} \aiminnorm{\nabla \theta(s)}^2{\mathrm{d}s} \leq \frac{3C_0^2}{\underline\kappa} + \frac{2c_0^2}{\underline\kappa^2}(C_0+C_0^{r+1})^2,\quad\forall\,t\geq  t_0.
\end{equation}

\subsection{$H^1$-estimates}\label{subsec2.4} We now turn to proving the uniform estimates for $(\boldsymbol u,\theta)$ in $H^1(\Omega)$. In order to prove the estimates for $\theta$, we need to  introduce two auxiliary quantities
\begin{equation}\label{eq2.3.1}
	\breve\theta = \int_0^\theta \sqrt{\kappa(\tau) }\text{d}\tau,\quad\quad
\hat\theta = \int_0^\theta \kappa(\tau)\,\text{d}\tau.
\end{equation}
By explicit calculations, we obtain that $\breve\theta$ and $\hat\theta$ satisfy the following equations
\begin{equation}\label{eq2.3.2}
\begin{split}
	&\breve\theta_t -\sqrt{\kappa(\theta)}{\mathrm{div}\,}(\sqrt{\kappa(\theta)}\nabla\breve\theta) + \boldsymbol u\cdot\nabla\breve\theta - \sqrt{\kappa(\theta)}u_2 = -\kappa'(\theta)\breve\theta_y,\\
	&\hat\theta_t -\kappa(\theta)\Delta\hat\theta + \boldsymbol u\cdot \nabla\hat\theta - \kappa(\theta)u_2 = -\kappa'(\theta)\hat\theta_y,
\end{split}
\end{equation}
with the following homogeneous boundary conditions
\begin{equation}\label{eq2.3.3}
	\breve\theta(t,x,y)=\hat\theta(t,x,y)=0,\quad\text{ on }\partial\Omega.
\end{equation}
By direct calculations again, we immediately find the following relations
\begin{equation}\label{eq2.3.30.1}
	\nabla\breve\theta=\sqrt{\kappa(\theta)}\nabla\theta,\quad\quad \breve\theta_t=\sqrt{\kappa(\theta)}\theta_t,\quad\quad
	\nabla\hat\theta=\kappa(\theta)\nabla\theta,\quad\quad\hat\theta_t=\kappa(\theta)\theta_t.
\end{equation}
\begin{equation}\label{eq2.3.30.2}
	\sqrt{\underline\kappa}\aiminnorm{\nabla\theta}\leq \aiminnorm{\nabla\breve\theta},\quad\quad
	\sqrt{\underline\kappa}\aiminnorm{\theta_t}\leq \aiminnorm{\breve\theta_t},\quad\quad
	\underline\kappa\aiminnorm{\nabla\theta}\leq \aiminnorm{\nabla\hat\theta},\quad\quad
	\underline\kappa\aiminnorm{\theta_t}\leq \aiminnorm{\hat\theta_t}.
\end{equation}

In order to derive the uniform $H^1$-estimates of $\theta$, we actually first derive uniform $H^1$-estimates of $\hat\theta$ and then extend the estimates to $\theta$. For that reason, we need a refined version of \eqref{eq2.2.5}, which would provide us a uniform control on the time average of $\aiminnorm{\nabla\hat\theta}^2$ (see Lemma~\ref{lem2.4.0} below), and we prove the result through $\breve\theta$. Recall that here and in the following, $C$ denotes a positive constant, independent of the time $t$ and initial data $\boldsymbol u_0$ and $\theta_0$, that may vary from line to line.
\begin{lemma}\label{lem2.4.0}
	Under the assumptions of Theorem~\ref{thm1.3}, we have for all $t\geq t_0$:
\begin{equation}\label{eq2.3.35}
	\int_t^{t+1} \aiminnorm{ \nabla\hat\theta(s)}^2{\mathrm{d}s} = \int_t^{t+1}\aiminnorm{\kappa(\theta)\nabla\theta(s)}^2{\mathrm{d}s} =
	\int_t^{t+1}\aiminnorm{ \sqrt{\kappa(\theta)}\nabla\breve\theta(s)}^2{\mathrm{d}s}
	\leq C.
\end{equation}
\end{lemma}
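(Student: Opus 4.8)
The plan is to read off the three equalities in the statement from \eqref{eq2.3.30.1}: since $\nabla\hat\theta=\kappa(\theta)\nabla\theta=\sqrt{\kappa(\theta)}\,\nabla\breve\theta$ pointwise, the three integrands in \eqref{eq2.3.35} coincide, so it suffices to bound the time average of $\aiminnorm{\kappa(\theta)\nabla\theta}^2=\int_\Omega \kappa(\theta)^2\aiminabs{\nabla\theta}^2\,{\mathrm{d}x}{\mathrm{d}y}$. The point is that this is exactly the "refined version of \eqref{eq2.2.5}" alluded to before the lemma: \eqref{eq2.2.5} is the case $p=2$ of the weighted bound \eqref{eq2.2.12}, and the super-linear weight $\kappa(\theta)^2$ is controlled by the growth assumption \eqref{asp1.1.2}$_2$, which feeds us back into \eqref{eq2.2.12} at a larger exponent.

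Concretely, I would first use \eqref{asp1.1.2}$_2$ to write $\kappa(\theta)^2\le c_0^2(\aiminabs{\theta}^{r+1}+1)^2\le 2c_0^2(\aiminabs{\theta}^{2r+2}+1)$, so that for $t\ge t_0$,
\begin{equation*}
\int_t^{t+1}\aiminnorm{\kappa(\theta)\nabla\theta(s)}^2{\mathrm{d}s}\le 2c_0^2\int_t^{t+1}\!\!\int_\Omega \aiminabs{\theta}^{2r+2}\aiminabs{\nabla\theta}^2\,{\mathrm{d}x}{\mathrm{d}y}{\mathrm{d}s}+2c_0^2\int_t^{t+1}\aiminnorm{\nabla\theta(s)}^2{\mathrm{d}s}.
\end{equation*}
The second integral is bounded by $C$ via \eqref{eq2.2.5}. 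For the first integral I would invoke \eqref{eq2.2.12} with the exponent $p=2r+4$, chosen precisely so that $p-2=2r+2$ matches the power of $\aiminabs{\theta}$; this gives a bound by $C$ uniform in $t\ge t_0$. Summing the two contributions yields \eqref{eq2.3.35}.

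The only delicate point — and the reason Proposition~\ref{prop2.3.1} fixed $p_0$ as large as $10r+10$ — is the admissibility of the exponent, which is where I expect the bookkeeping to matter most. Deriving \eqref{eq2.2.12} at $p=2r+4$ relies on the uniform absorbing-ball bounds $\aiminnorm{\theta}_q\le C_0$ at $q\in\{p,\,p+2r,\,2p-2\}=\{2r+4,\,4r+4,\,4r+6\}$, and all of these stay below $p_0=10r+10$ for every $r\ge0$. So the "hard part" here is not an estimate but the verification that the chosen exponent lands inside the range of $L^q$ norms already controlled in Proposition~\ref{prop2.3.1}; the value of $p_0$ has been arranged exactly to absorb this.

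Finally, to match the paper's phrasing of obtaining the bound \emph{through} $\breve\theta$, I would note that the same conclusion follows intrinsically by an energy estimate: testing the $\breve\theta$-equation \eqref{eq2.3.2}$_1$ against $\breve\theta$ (legitimate since $\breve\theta$ vanishes on $\partial\Omega$ by \eqref{eq2.3.3}) produces exactly the good term $\aiminnorm{\sqrt{\kappa(\theta)}\nabla\breve\theta}^2$, together with a cubic cross term $\tfrac12\int_\Omega \sqrt{\kappa(\theta)}\,\kappa'(\theta)\,\breve\theta\,\aiminabs{\nabla\theta}^2\,{\mathrm{d}x}{\mathrm{d}y}$ arising from the variable-coefficient diffusion. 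This cross term is the genuine obstacle on that route: it cannot be absorbed into the good term. However, using \eqref{asp1.1.3} to replace $\sqrt{\kappa}\,\kappa'$ by $\tilde c_0\kappa^{3/2}$, together with the polynomial control of $\aiminabs{\breve\theta}$ by a power of $\aiminabs{\theta}$, its time integral is again of the form $\int_t^{t+1}\int_\Omega \aiminabs{\theta}^{p-2}\aiminabs{\nabla\theta}^2$ with $p=2r+5$, hence bounded by \eqref{eq2.2.12}; integrating the energy identity on $(t,t+1)$ and discarding the nonnegative endpoint term $\aiminnorm{\breve\theta(t+1)}^2$ then recovers \eqref{eq2.3.35}. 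I would present the short direct computation above as the main argument and keep this energy identity as the conceptual explanation.
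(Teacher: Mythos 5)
Your main argument is correct, but it follows a genuinely different and shorter route than the paper. The paper proves the lemma through the PDE for $\breve\theta$: it tests \eqref{eq2.3.2}$_1$ against $\breve\theta$, uses the preliminary pointwise bound $\aiminabs{\breve\theta}^2\leq c_0\big(\tfrac{1}{r+2}\aiminabs{\theta}^{r+3}+\aiminabs{\theta}^2\big)$, absorbs half of the cross term $I_1$ into $\tfrac{\underline\kappa}{4}\aiminnorm{\nabla\breve\theta}^2$ by Young's inequality, bounds the remainders by $C\int_\Omega(\aiminabs{\theta}^{3r+3}+1)\aiminabs{\nabla\theta}^2{\mathrm{d}x}{\mathrm{d}y}$ plus terms controlled by \eqref{eq2.2.8}, and then integrates in time, invoking \eqref{eq2.2.8}, \eqref{eq2.2.12} (at exponents up to $p=3r+5$), and \eqref{eq2.2.5}. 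You instead observe that, by \eqref{eq2.3.30.1} and \eqref{asp1.1.2}$_2$, $\aiminnorm{\nabla\hat\theta}^2=\int_\Omega\kappa(\theta)^2\aiminabs{\nabla\theta}^2{\mathrm{d}x}{\mathrm{d}y}\leq 2c_0^2\int_\Omega(\aiminabs{\theta}^{2r+2}+1)\aiminabs{\nabla\theta}^2{\mathrm{d}x}{\mathrm{d}y}$, so the lemma is an immediate corollary of \eqref{eq2.2.12} at $p=2r+4$ together with \eqref{eq2.2.5}; your admissibility check that the exponents $\{2r+4,\,4r+4,\,4r+6\}$ needed to derive \eqref{eq2.2.12} at that $p$ all lie below $p_0=10r+10$ is exactly the point requiring verification, and it is right. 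Both proofs rest on the same two pillars (the $L^p$ absorbing balls of Proposition~\ref{prop2.3.1} and the weighted time averages \eqref{eq2.2.12}), but your version buys brevity and shows that the auxiliary function $\breve\theta$ and its equation, which the paper introduces solely for this lemma, are dispensable here; the paper's energy route buys nothing additional for this statement beyond stylistic uniformity with the subsequent $\hat\theta$ arguments. Two minor remarks on your closing ``energy identity'' aside, which is essentially the paper's proof: testing \eqref{eq2.3.2}$_1$ with $\breve\theta$ also produces the terms $\int_\Omega u_2\sqrt{\kappa(\theta)}\,\breve\theta\,{\mathrm{d}x}{\mathrm{d}y}$ and $-\int_\Omega\kappa'(\theta)\breve\theta_y\breve\theta\,{\mathrm{d}x}{\mathrm{d}y}$, which must be estimated as well, and after integrating over $(t,t+1)$ you must bound the left endpoint $\aiminnorm{\breve\theta(t)}^2$ (via \eqref{eq2.3.31p} and \eqref{eq2.2.8}), not merely discard $\aiminnorm{\breve\theta(t+1)}^2$; also the cross term can in fact be half-absorbed into the good term, as the paper does --- it just cannot be absorbed entirely. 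None of this affects your main argument, which is complete and stands on its own.
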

\begin{proof}
As a preliminary,
we utilize the Cauchy-Schwarz inequality and \eqref{asp1.1.2}$_2$ to obtain
\begin{equation}\label{eq2.3.31}
	\aiminabs{\breve\theta}^2\leq \big(\int_0^{\aiminabs{\theta}} \sqrt{\kappa(\tau)}\text{d}\tau\big)^2
	\leq \int_0^{\aiminabs{\theta}} 1 \text{d}\tau\int_0^{\aiminabs{\theta}} c_0(\aiminabs{\tau}^{r+1}+1) \text{d}\tau
	\leq c_0\big(\frac{\aiminabs{\theta}^{r+3}}{r+2} + \aiminabs{\theta}^2),
\end{equation}
which immediately yields that
\begin{equation}\label{eq2.3.31p}
	\aiminnorm{\breve\theta}^2 \leq c_0 \big(\frac1{r+2} \aiminnorm{\theta}_{r+3}^{r+3} + \aiminnorm{\theta}^2\big).
\end{equation}

Now, taking the inner product of \eqref{eq2.3.2}$_1$ with $\breve\theta$ in $L^2(\Omega)$ and integrating by parts, we obtain
\begin{equation}\begin{split}\label{eq2.3.32}
	\frac12\frac{\mathrm{d}}{\mathrm{d}t}\aiminnorm{\breve\theta}^2 &+ \int_\Omega \kappa(\theta)\aiminabs{\nabla\breve\theta}^2 {\mathrm{d}x}{\mathrm{d}y} \\
	&= - \frac12\int_\Omega \nabla\breve\theta\cdot \kappa'(\theta) \nabla\theta\cdot\breve\theta{\mathrm{d}x}{\mathrm{d}y} + \int_\Omega u_2 \sqrt{\kappa(\theta)}\breve\theta{\mathrm{d}x}{\mathrm{d}y} - \int_\Omega \kappa'(\theta)\breve\theta_y \breve\theta{\mathrm{d}x}{\mathrm{d}y}\\
	&=:I_1 + I_2 + I_3.
\end{split}\end{equation}
We estimate the right-hand side of \eqref{eq2.3.32} term by term. Using \eqref{asp1.1.2}$_1$, \eqref{eq2.3.31} and Young's inequality, we estimate $I_1$ as
\begin{equation*}
	\begin{split}
		I_1 &\leq C\int_\Omega \aiminabs{\nabla\breve\theta} \aiminabs{\nabla\theta}(\aiminabs{\theta}^{r+3}+\aiminabs{\theta}^2)^{1/2}(\aiminabs{\theta}^r+1){\mathrm{d}x}{\mathrm{d}y}\\
		&\leq \int_\Omega\bigg[\frac{\underline\kappa}4\aiminabs{\nabla\breve\theta}^2 + C\aiminabs{\nabla\theta}^2 (\aiminabs{\theta}^{r+3}+\aiminabs{\theta}^2)(\aiminabs{\theta}^r+1)^2\bigg]{\mathrm{d}x}{\mathrm{d}y}\\
		&\leq \frac{\underline\kappa}4 \aiminnorm{\nabla\breve\theta}^2 + C\int_\Omega (\aiminabs{\theta}^{3r+3} +1 )\aiminabs{\nabla\theta}^2{\mathrm{d}x}{\mathrm{d}y}.
	\end{split}
\end{equation*}
Using the Cauchy-Schwarz inequality, Young's inequality and \eqref{asp1.1.2}$_2$,\eqref{eq2.3.31}, $I_2$ is estimated as
\begin{equation*}
	\begin{split}
		I_2 &\leq 2\int_\Omega\big[\aiminabs{u_2}^2 + \kappa(\theta)\aiminabs{\breve\theta}^2\big]{\mathrm{d}x}{\mathrm{d}y}\\
		&\leq 2\aiminnorm{\boldsymbol u}^2 + C\int_\Omega (\aiminabs{\theta}^{r+1}+1)( \aiminabs{\theta}^{r+3}+\aiminabs{\theta}^2 ){\mathrm{d}x}{\mathrm{d}y}\\
		&\leq 2\aiminnorm{\boldsymbol u}^2 +C( \aiminnorm{\theta}_{2r+4}^{2r+4} + \aiminnorm{\theta}^2 ).
	\end{split}
\end{equation*}
The last term $I_3$ is easier and simpler than $I_1$ and we actually have
\begin{equation*}\begin{split}
	I_3 \leq \frac{\underline\kappa}4 \aiminnorm{\nabla\breve\theta}^2 + C\int_\Omega (\aiminabs{\theta}^{3r+3} +1 ){\mathrm{d}x}{\mathrm{d}y}\leq \frac{\underline\kappa}4 \aiminnorm{\nabla\breve\theta}^2 + C (\aiminnorm{\theta}_{3r+3}^{3r+3} + 1).
\end{split}\end{equation*}

Inserting these estimates for $I_1,\,I_2$ and $I_3$ into \eqref{eq2.3.32} implies that
\begin{equation}\begin{split}\label{eq2.3.34}
	\frac{\mathrm{d}}{\mathrm{d}t}\aiminnorm{\breve\theta}^2 + \int_\Omega \kappa(\theta)\aiminabs{\nabla\breve\theta}^2 {\mathrm{d}x}{\mathrm{d}y}
	\leq & C\bigg[\int_\Omega (\aiminabs{\theta}^{3r+3} +1 )\aiminabs{\nabla\theta}^2{\mathrm{d}x}{\mathrm{d}y}  + \aiminnorm{\boldsymbol u}^2\\
	&\quad +  (\aiminnorm{\theta}_{2r+4}^{2r+4} + \aiminnorm{\theta}_{3r+3}^{3r+3} + \aiminnorm{\theta}^2+ 1)\bigg].
\end{split}\end{equation}
We now integrate \eqref{eq2.3.34} in time on $(t,t+1)$, combine the estimate \eqref{eq2.3.31p}, and employ the uniform estimates \eqref{eq2.2.8},\,\eqref{eq2.2.12}, and \eqref{eq2.2.5}; we arrive at
\[
\int_t^{t+1}\aiminnorm{ \sqrt{\kappa(\theta)}\nabla\breve\theta(s)}^2{\mathrm{d}s}
	\leq C,\quad\forall\, t\geq t_0,
\]
which implies \eqref{eq2.3.35} by the relation \eqref{eq2.3.30.1}. We thus completed the proof of Lemma~\ref{lem2.4.0}.
\end{proof}

It is convenient to observe the following result.
\begin{lemma}\label{lem2.4.1}
	For any fixed $q\geq 1$, we have
	\begin{equation}
	\aiminnorm{\kappa'(\theta)}_q,\;\aiminnorm{\kappa(\theta)}_q \leq C,\quad\forall\;t\geq  t_0.
	\end{equation}
\end{lemma}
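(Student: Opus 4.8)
The plan is to reduce both bounds directly to the uniform $L^p$-estimates for $\theta$ already furnished by Proposition~\ref{prop2.3.1}, using nothing more than the pointwise growth conditions \eqref{asp1.1.2} on $\kappa$ and $\kappa'$. Recall that Proposition~\ref{prop2.3.1} gives $\aiminnorm{\theta(t)}_p\leq C_0$ for all $t\geq t_0$ and all $2\leq p\leq p_0$, where $p_0$ may be taken as large as needed (for instance $p_0=10r+10$); by H\"older's inequality on the bounded domain $\Omega$ this extends to $\aiminnorm{\theta(t)}_p\leq \aiminabs{\Omega}^{1/p-1/2}C_0$ for $1\leq p\leq 2$ as well. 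Hence $\theta$ lies in a time-uniform absorbing ball of every $L^p(\Omega)$ with $1\leq p\leq p_0$, and the entire argument will be a matter of raising the growth bounds to the $q$-th power and integrating.

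First I would bound $\kappa'(\theta)$. By \eqref{asp1.1.2}$_1$ together with the elementary inequality $(a+b)^q\leq 2^{q-1}(a^q+b^q)$, one has the pointwise estimate $\aiminabs{\kappa'(\theta)}^q\leq c_0^q(\aiminabs{\theta}^r+1)^q\leq C(\aiminabs{\theta}^{rq}+1)$. Integrating over $\Omega$ yields
\[
\aiminnorm{\kappa'(\theta)}_q^q \leq C\big(\aiminnorm{\theta}_{rq}^{rq}+\aiminabs{\Omega}\big),
\]
so that, provided the exponent $rq$ does not exceed $p_0$, the right-hand side is controlled uniformly for $t\geq t_0$ by the absorbing-ball estimate above, giving $\aiminnorm{\kappa'(\theta)}_q\leq C$.

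The bound for $\kappa(\theta)$ is entirely analogous, using \eqref{asp1.1.2}$_2$: one obtains $\aiminabs{\kappa(\theta)}^q\leq C(\aiminabs{\theta}^{(r+1)q}+1)$ and hence $\aiminnorm{\kappa(\theta)}_q^q\leq C(\aiminnorm{\theta}_{(r+1)q}^{(r+1)q}+\aiminabs{\Omega})$. For a given fixed $q\geq 1$ it suffices to have chosen the exponent $p_0$ in Proposition~\ref{prop2.3.1} at least as large as $(r+1)q$, so that both relevant powers $rq$ and $(r+1)q$ lie in $[1,p_0]$ and the corresponding $L^{rq}$- and $L^{(r+1)q}$-norms of $\theta$ are bounded uniformly in $t\geq t_0$ and independently of the initial data. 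The resulting constant $C$ may depend on $q$, $r$, $c_0$ and $\Omega$ but not on $t$, $\boldsymbol u_0$ or $\theta_0$. There is no genuine difficulty here; the only point requiring attention is to keep the powers of $\theta$ within the range of exponents for which the $L^p$ absorbing ball of Proposition~\ref{prop2.3.1} is available, which is exactly why $p_0$ was allowed to be taken large.
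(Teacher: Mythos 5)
Your proof is correct and is essentially the paper's own argument: the paper disposes of this lemma in a single line, observing that it follows directly from the uniform estimate \eqref{eq2.2.8} and the growth assumption \eqref{asp1.1.2}, which is precisely the reduction you carry out in detail. The one point you make explicit that the paper leaves implicit is that the exponent $p_0$ in Proposition~\ref{prop2.3.1} must be chosen at least as large as $(r+1)q$, which is exactly what the paper's ``$p_0$ large enough'' provision is for.
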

The proof of Lemma~\ref{lem2.4.1} directly follows from the uniform estimate \eqref{eq2.2.8} and the assumption \eqref{asp1.1.2}.

With the uniform estimate \eqref{eq2.3.35} in hand, we are now able to derive the uniform $H^1$-estimate of $\hat\theta$ and hence of $\theta$.
\begin{proposition}\label{prop2.4.1}
		Under the assumptions of Theorem~\ref{thm1.3}, we have
\begin{equation}\label{eq2.3.8}
	\aiminnorm{\nabla\hat\theta(t)},\;\aiminnorm{\nabla\theta(t)} \leq C,\quad\forall\,t\geq  t_0+1,
\end{equation}
and
\begin{equation}\label{eq2.3.9}
	\int_t^{t+1}\aiminnorm{\Delta\hat\theta(s)}^2{\mathrm{d}s} \leq C,\quad\forall\,t\geq  t_0+1.	
\end{equation}
\end{proposition}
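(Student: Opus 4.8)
The plan is to establish the estimate for $\hat\theta$ first, by an $H^1$-energy argument on equation \eqref{eq2.3.2}$_2$, and then transfer it to $\theta$ via \eqref{eq2.3.30.2}. The reason for working with $\hat\theta$ rather than $\theta$ is structural: in \eqref{eq2.3.2}$_2$ the diffusion appears as $\kappa(\theta)\Delta\hat\theta$, so testing against $-\Delta\hat\theta$ produces the fully coercive term $\int_\Omega\kappa(\theta)\aiminabs{\Delta\hat\theta}^2\geq\underline\kappa\aiminnorm{\Delta\hat\theta}^2$, whereas testing the equation for $\theta$ against $-\Delta\theta$ would generate the awkward cubic term $\int_\Omega\kappa'(\theta)\aiminabs{\nabla\theta}^2\Delta\theta$. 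First I would test \eqref{eq2.3.2}$_2$ with $-\Delta\hat\theta$ and integrate over $\Omega$; since $\hat\theta|_{\partial\Omega}=0$ for all $t$ (hence $\hat\theta_t|_{\partial\Omega}=0$), the time term integrates by parts cleanly, yielding
\[
\frac12\frac{\mathrm{d}}{\mathrm{d}t}\aiminnorm{\nabla\hat\theta}^2 + \int_\Omega\kappa(\theta)\aiminabs{\Delta\hat\theta}^2{\mathrm{d}x}{\mathrm{d}y} = \int_\Omega(\boldsymbol u\cdot\nabla\hat\theta)\Delta\hat\theta{\mathrm{d}x}{\mathrm{d}y} - \int_\Omega\kappa(\theta)u_2\Delta\hat\theta{\mathrm{d}x}{\mathrm{d}y} + \int_\Omega\kappa'(\theta)\hat\theta_y\Delta\hat\theta{\mathrm{d}x}{\mathrm{d}y}.
\]

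Each of the three terms on the right is then bounded by Cauchy--Schwarz and Young so as to absorb a fraction of $\underline\kappa\aiminnorm{\Delta\hat\theta}^2$. For the advection term I would use Ladyzhenskaya's inequality together with $\aiminnorm{\nabla\hat\theta}_{H^1}\backsimeq\aiminnorm{\Delta\hat\theta}$ (valid since $\hat\theta\in H_0^1\cap H^2$) to estimate $\aiminnorm{\boldsymbol u}_4\aiminnorm{\nabla\hat\theta}_4\aiminnorm{\Delta\hat\theta}$ and conclude, via Young, a bound of the form $\tfrac{\underline\kappa}{8}\aiminnorm{\Delta\hat\theta}^2 + C\aiminnorm{\boldsymbol u}^2\aiminnorm{\boldsymbol u}_{H^1}^2\aiminnorm{\nabla\hat\theta}^2$. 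The buoyancy term $\int_\Omega\kappa(\theta)u_2\Delta\hat\theta$ is controlled using $\aiminnorm{\kappa(\theta)}_4\leq C$ from Lemma~\ref{lem2.4.1} and Ladyzhenskaya on $u_2$, giving $\tfrac{\underline\kappa}{8}\aiminnorm{\Delta\hat\theta}^2 + C(1+\aiminnorm{\boldsymbol u}_{H^1}^2)$; the last term, using $\aiminnorm{\kappa'(\theta)}_4\leq C$, yields $\tfrac{\underline\kappa}{4}\aiminnorm{\Delta\hat\theta}^2 + C\aiminnorm{\nabla\hat\theta}^2$. Collecting everything produces a differential inequality
\[
\frac{\mathrm{d}}{\mathrm{d}t}\aiminnorm{\nabla\hat\theta}^2 + \underline\kappa\aiminnorm{\Delta\hat\theta}^2 \leq g\,\aiminnorm{\nabla\hat\theta}^2 + h,
\]
with $g = C(1+\aiminnorm{\boldsymbol u}^2\aiminnorm{\boldsymbol u}_{H^1}^2)$ and $h = C(1+\aiminnorm{\boldsymbol u}_{H^1}^2)$.

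The decisive step is the Uniform Gronwall Lemma (Lemma~\ref{lem2.1.2}) applied to $y=\aiminnorm{\nabla\hat\theta}^2$. Its hypotheses are met because $\int_t^{t+1}y\,\mathrm{d}s\leq C$ is exactly Lemma~\ref{lem2.4.0}, while $\int_t^{t+1}\aiminnorm{\boldsymbol u}_{H^1}^2\,\mathrm{d}s\leq C$ follows from the uniform $L^2$-bound \eqref{eq2.2.8} and the time-average control \eqref{eq2.2.10}; this bounds $\int_t^{t+1}h\,\mathrm{d}s$ and, using $\aiminnorm{\boldsymbol u}\leq C_0$, also $\int_t^{t+1}g\,\mathrm{d}s$. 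Lemma~\ref{lem2.1.2} (with $\gamma=1$) then gives $\aiminnorm{\nabla\hat\theta(t)}\leq C$ for $t\geq t_0+1$, and \eqref{eq2.3.30.2} transfers this to $\aiminnorm{\nabla\theta(t)}\leq C$, proving \eqref{eq2.3.8}. Finally, integrating the differential inequality over $(t,t+1)$ and inserting the now-established uniform bound on $y$ together with the controls on $\int g\,\mathrm{d}s$ and $\int h\,\mathrm{d}s$ yields \eqref{eq2.3.9}.

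The main obstacle is arranging the estimates so that the Gronwall coefficient $g$ is uniformly integrable over unit time-intervals, since this is what upgrades a finite-time estimate into the uniform-in-time bound required for an absorbing set. This is precisely why the preliminary Lemma~\ref{lem2.4.0} (supplying the time-average control of $\aiminnorm{\nabla\hat\theta}^2$, i.e. the quantity $a_3$) and the $L^q$-bounds on $\kappa(\theta)$ and $\kappa'(\theta)$ in Lemma~\ref{lem2.4.1} must be in place before this proposition; the delicate point throughout is keeping every power of $\theta$ produced by the growth conditions \eqref{asp1.1.2} inside quantities already known to be time-integrable.
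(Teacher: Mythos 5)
Your proposal is correct and follows essentially the same route as the paper: testing \eqref{eq2.3.2}$_2$ with $-\Delta\hat\theta$, absorbing the three right-hand terms via Ladyzhenskaya/H\"older/Young (the content of Lemma~\ref{lem2.1.4}), invoking the Uniform Gronwall Lemma~\ref{lem2.1.2} with the inputs from Lemmas~\ref{lem2.4.0}--\ref{lem2.4.1} and \eqref{eq2.2.8}, \eqref{eq2.2.10}, transferring to $\theta$ through \eqref{eq2.3.30.2}, and integrating the differential inequality over $(t,t+1)$ to get \eqref{eq2.3.9}. The only cosmetic difference is that you bound $\aiminnorm{\kappa(\theta)}_4$ and $\aiminnorm{\kappa'(\theta)}_4$ by $C$ immediately via Lemma~\ref{lem2.4.1}, whereas the paper carries them symbolically in the Gronwall coefficients before doing the same.
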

\begin{proof}
Taking the inner product of \eqref{eq2.3.2}$_2$ with $-\Delta\hat\theta$ in $L^2(\Omega)$ and using the Cauchy-Schwarz inequality and H\"older's inequality, we find that
\begin{equation}\begin{split}\label{eq2.3.4}
	\frac12\frac{\mathrm{d}}{\mathrm{d}t}\aiminnorm{\nabla\hat\theta}^2 + \underline\kappa\aiminnorm{\Delta\hat\theta}^2 &\leq \int_\Omega \big[\aiminabs{\boldsymbol u\cdot\nabla\hat\theta}\aiminabs{\Delta\hat\theta}+  \aiminabs{\kappa(\theta) u_2}\aiminabs{\Delta\hat\theta}+ \aiminabs{\kappa'(\theta)\hat\theta_y}\aiminabs{\Delta\hat\theta}\big]{\mathrm{d}x}{\mathrm{d}y}\\
	\leq &\frac{2\underline\kappa}{8}\aiminnorm{\Delta\hat\theta}^2 + \frac2{\underline\kappa}\aiminnorm{\boldsymbol u\cdot\nabla\hat\theta}^2 + \frac2{\underline\kappa}\aiminnorm{\kappa(\theta)\boldsymbol u}^2 + \aiminnorm{\kappa'(\theta)}_4\aiminnorm{\nabla\hat\theta}_4\aiminnorm{\Delta\hat\theta}.
\end{split}\end{equation}
We are now dealing with the last three terms in the right-hand side of \eqref{eq2.3.4} successively.
Applying Lemma~\ref{lem2.1.4} with $f=\boldsymbol u$ and $g=\nabla\hat\theta$, we obtain that
\begin{equation}\begin{split}\label{eq2.3.5.1}
	\aiminnorm{\boldsymbol u\cdot\nabla\hat\theta}^2 \leq
	  C\aiminnorm{\boldsymbol u}^2\aiminnorm{\nabla\boldsymbol u}^2\aiminnorm{\nabla\hat\theta}^2 + \frac{\underline\kappa^2}{16}\aiminnorm{\Delta\hat\theta}^2;
\end{split}\end{equation}
using H\"older's inequality and the Sobolev embedding, we have
\begin{equation}\label{eq2.3.5.2}
	\aiminnorm{\kappa(\theta)\boldsymbol u}^2 \leq \aiminnorm{\kappa(\theta)}_4^2\aiminnorm{\boldsymbol u}_4^2
	\leq C\aiminnorm{\kappa(\theta)}_4^2\aiminnorm{\nabla\boldsymbol u}^2;
\end{equation}
using Ladyzhenskaya's inequality and Young's inequality, we find that
\begin{equation}\begin{split}\label{eq2.3.5.3}
	\aiminnorm{\kappa'(\theta)}_4\aiminnorm{\nabla\hat\theta}_4\aiminnorm{\Delta\hat\theta}
	\leq C\aiminnorm{\kappa'(\theta)}_4\aiminnorm{\nabla\hat\theta}^{1/2}\aiminnorm{\Delta\hat\theta}^{3/2}
	\leq  C\aiminnorm{\kappa'(\theta)}^4_4\aiminnorm{\nabla\hat\theta}^2 + \frac{\underline\kappa}{8}\aiminnorm{\Delta\hat\theta}^2.
\end{split}\end{equation}

Inserting the three estimates \eqref{eq2.3.5.1}-\eqref{eq2.3.5.3} into \eqref{eq2.3.4} yields that
\begin{equation}\begin{split}\label{eq2.3.6}
	\frac{\mathrm{d}}{\mathrm{d}t}\aiminnorm{\nabla\hat\theta}^2 + \underline\kappa\aiminnorm{\Delta\hat\theta}^2 \leq C\big(\aiminnorm{\boldsymbol u}^2\aiminnorm{\nabla\boldsymbol u}^2 +  \aiminnorm{\kappa'(\theta)}^4_4\big)\aiminnorm{\nabla\hat\theta}^2
	+C\aiminnorm{\kappa(\theta)}_4^2\aiminnorm{\nabla\boldsymbol u}^2.
\end{split}\end{equation}
We temporarily ignore the second term involving $\Delta\hat\theta$ in the left-hand side of \eqref{eq2.3.6},
and we then employ the Uniform Gronwall Lemma~\ref{lem2.1.2} with $y=\aiminnorm{\nabla\hat\theta(t)}^2$ and utilize the estimates \eqref{eq2.2.8}, \eqref{eq2.2.10} and Lemmas~\ref{lem2.4.0}-\ref{lem2.4.1}; we find the uniform estimate \eqref{eq2.3.8} for $\aiminnorm{\nabla\hat\theta}$. Using the third inequality in \eqref{eq2.3.30.2}, we then obtain the desired uniform estimate \eqref{eq2.3.8} for $\aiminnorm{\nabla\theta}$ and hence the absorbing ball for $\theta$ in $H^1(\Omega)$.
Now, integrating \eqref{eq2.3.6} in time on $(t,t+1)$ and using the uniform estimates \eqref{eq2.2.8},\eqref{eq2.2.10} and \eqref{eq2.3.8}, we obtain the control \eqref{eq2.3.9} of the time average for $\aiminnorm{\Delta\hat\theta}^2$.
We thus have completed the proof of Proposition~\ref{prop2.4.1}.
\end{proof}

As a consequence of Proposition~\ref{prop2.4.1}, we obtain the control of the time average of $\aiminnorm{\Delta\theta}^2$.
\begin{corollary}\label{cor3.4.1}
Under the assumptions of Theorem~\ref{thm1.3}, we have
\begin{equation}\label{eq2.3.9p}
	\int_t^{t+1}\aiminnorm{\kappa(\theta)\Delta\theta(s)}^2{\mathrm{d}s} \leq C,\quad\forall\,t\geq  t_0+1.	
\end{equation}
\end{corollary}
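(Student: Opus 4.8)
The plan is to reduce the quantity $\kappa(\theta)\Delta\theta$ to the already-controlled Laplacian of the auxiliary variable $\hat\theta$ plus a lower-order gradient term. Starting from the relation $\nabla\hat\theta=\kappa(\theta)\nabla\theta$ in \eqref{eq2.3.30.1}, I would take one more divergence to obtain the pointwise identity
\[
\Delta\hat\theta = \kappa(\theta)\Delta\theta + \kappa'(\theta)\aiminabs{\nabla\theta}^2,
\]
and rearrange it as $\kappa(\theta)\Delta\theta=\Delta\hat\theta-\kappa'(\theta)\aiminabs{\nabla\theta}^2$. This immediately gives the splitting
\[
\aiminnorm{\kappa(\theta)\Delta\theta}^2 \leq 2\aiminnorm{\Delta\hat\theta}^2 + 2\aiminnorm{\kappa'(\theta)\aiminabs{\nabla\theta}^2}^2,
\]
whose first term is already controlled in time average by \eqref{eq2.3.9}. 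Everything then hinges on absorbing the quartic gradient term $\aiminnorm{\kappa'(\theta)\aiminabs{\nabla\theta}^2}^2$ into the time average of $\aiminnorm{\Delta\hat\theta}^2$.

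To handle the quartic term I would use the ratio control \eqref{asp1.1.3}, which gives $\aiminabs{\kappa'(\theta)}\leq \tilde c_0 \kappa(\theta)$, together with the lower bound $\kappa(\theta)\geq\underline\kappa$ from \eqref{asp1.1.1}. Writing $\kappa(\theta)\aiminabs{\nabla\theta}=\aiminabs{\nabla\hat\theta}$ and $\aiminabs{\nabla\theta}\leq\underline\kappa^{-1}\aiminabs{\nabla\hat\theta}$, I obtain the pointwise bound $\aiminabs{\kappa'(\theta)}\aiminabs{\nabla\theta}^2\leq (\tilde c_0/\underline\kappa)\aiminabs{\nabla\hat\theta}^2$, so that
\[
\aiminnorm{\kappa'(\theta)\aiminabs{\nabla\theta}^2}^2 \leq \frac{\tilde c_0^2}{\underline\kappa^2}\aiminnorm{\nabla\hat\theta}_4^4.
\]

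It remains to estimate $\aiminnorm{\nabla\hat\theta}_4^4$. Applying Ladyzhenskaya's inequality from Lemma~\ref{lem2.1.1} component-wise to $\nabla\hat\theta$, then the norm equivalence $\aiminnorm{\hat\theta}_{H^2}\backsimeq\aiminnorm{\Delta\hat\theta}$ valid on $H_0^1\cap H^2$ (recall $\hat\theta$ vanishes on $\partial\Omega$ by \eqref{eq2.3.3}), I expect
\[
\aiminnorm{\nabla\hat\theta}_4^4 \leq C\aiminnorm{\nabla\hat\theta}^2\aiminnorm{\nabla\hat\theta}_{H^1}^2 \leq C\aiminnorm{\nabla\hat\theta}^2\aiminnorm{\Delta\hat\theta}^2.
\]
Invoking the uniform $H^1$-bound $\aiminnorm{\nabla\hat\theta(t)}\leq C$ from \eqref{eq2.3.8}, the factor $\aiminnorm{\nabla\hat\theta}^2$ is absorbed into the constant, leaving $\aiminnorm{\nabla\hat\theta}_4^4\leq C\aiminnorm{\Delta\hat\theta}^2$. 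Integrating the whole chain over $(t,t+1)$ and using \eqref{eq2.3.9} once more yields the claim. The only point requiring a little care is this last step: the quartic term must be made genuinely \emph{linear} in $\aiminnorm{\Delta\hat\theta}^2$ rather than quadratic, which is exactly what the uniform $H^1$-bound \eqref{eq2.3.8} secured in Proposition~\ref{prop2.4.1} provides; without that prior absorbing-ball estimate the corollary would not follow from $L^2$-in-time control of $\Delta\hat\theta$ alone.
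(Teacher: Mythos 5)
Your proposal is correct and follows essentially the same route as the paper: the same identity $\Delta\hat\theta = \kappa(\theta)\Delta\theta + \kappa'(\theta)\aiminabs{\nabla\theta}^2$, the same use of \eqref{asp1.1.3} and \eqref{asp1.1.1} to reduce the quartic term to $C\aiminnorm{\nabla\hat\theta}_4^4$, the same Ladyzhenskaya estimate $\aiminnorm{\nabla\hat\theta}_4^4\leq C\aiminnorm{\nabla\hat\theta}^2\aiminnorm{\Delta\hat\theta}^2$, and the same appeal to Proposition~\ref{prop2.4.1} for both the uniform $H^1$-bound and the time-average control \eqref{eq2.3.9}. Your closing remark correctly identifies the key point, namely that \eqref{eq2.3.8} linearizes the quartic term in $\aiminnorm{\Delta\hat\theta}^2$.
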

\begin{proof}	
Note that, by the definition \eqref{eq2.3.1} of $\hat\theta$, and direct calculations, we have
\begin{equation}\label{eq2.3.7}
	\Delta\hat\theta = \kappa(\theta)\Delta\theta + \kappa'(\theta)\aiminabs{\nabla\theta}^2,
\end{equation}
which, with the third identity in \eqref{eq2.3.30.1}, yields
\begin{equation}\begin{split}\label{eq2.3.7.1}
	\aiminnorm{ \kappa(\theta)\Delta\theta }^2
	\leq 2&\aiminnorm{ \Delta\hat\theta }^2 + 2 \aiminnorm{\frac{\kappa'(\theta)}{\kappa(\theta)^2} \aiminabs{\nabla\hat\theta}^2 }^2\\
	&\leq 2\aiminnorm{ \Delta\hat\theta }^2+ C \aiminnorm{ \nabla\hat\theta }_4^4
	\leq 2\aiminnorm{ \Delta\hat\theta }^2 + C\aiminnorm{ \nabla\hat\theta }^2 \aiminnorm{\Delta\hat\theta}^2,
\end{split}\end{equation}
where we used the assumptions \eqref{asp1.1.3},\eqref{asp1.1.1} in the second inequality.
Hence, the uniform estimate \eqref{eq2.3.9p} then follows from \eqref{eq2.3.7.1} and Proposition~\ref{prop2.4.1}.
\end{proof}

We now aim to obtain a control of the time average of $\hat\theta_t$ and hence of $\theta_t$.
Taking the inner product of \eqref{eq2.3.2}$_2$ with $\hat\theta_t$ in $L^2(\Omega)$ and integrating by parts, we obtain
\begin{equation}\label{eq2.3.10}
	\begin{split}
		\frac12\frac{\mathrm{d}}{\mathrm{d}t}\int_\Omega \kappa(\theta)\aiminabs{\nabla\hat\theta}^2{\mathrm{d}x}{\mathrm{d}y} + \aiminnorm{\hat\theta_t}^2
		&= \frac12\int_\Omega\kappa'(\theta)\theta_t\aiminabs{\nabla\hat\theta}^2{\mathrm{d}x}{\mathrm{d}y}
		-\int_\Omega \boldsymbol u\cdot\nabla\hat\theta\cdot\hat\theta_t{\mathrm{d}x}{\mathrm{d}y}\\
		&\quad\quad+\int_\Omega \kappa(\theta)u_2\hat\theta_t{\mathrm{d}x}{\mathrm{d}y}
		-\int_\Omega \kappa'(\theta)\hat\theta_y\hat\theta_t{\mathrm{d}x}{\mathrm{d}y}\\
		&=:I_1+I_2+I_3+I_4.
	\end{split}
\end{equation}
The four terms in the right-hand side of \eqref{eq2.3.10} are estimated as follows.
Using assumption \eqref{asp1.1.3}, H\"older's inequality, Ladyzhenskaya's inequality, and Young's inequality, the first term $I_1$ is estimated as:
\begin{equation*}
	\begin{split}
		I_1
		\leq \frac12\int_\Omega\frac{\aiminabs{\kappa'(\theta)}}{\kappa(\theta)}\hat\theta_t\aiminabs{\nabla\hat\theta}^2{\mathrm{d}x}{\mathrm{d}y}
		\leq C\aiminnorm{\hat\theta_t}\aiminnorm{\nabla\hat\theta}_4^2
		\leq C\aiminnorm{\nabla\hat\theta}^2\aiminnorm{\Delta\hat\theta}^2 + \frac18\aiminnorm{\hat\theta_t}^2;
	\end{split}
\end{equation*}
applying Lemma~\ref{lem2.1.6} with $f=\boldsymbol u$, $g=\nabla\hat\theta$, and $h=\hat\theta_t$ yields
\begin{equation*}\begin{split}
	I_2 \leq\int_\Omega \aiminabs{\boldsymbol u}\aiminabs{\nabla\hat\theta}\aiminabs{\hat\theta_t} {\mathrm{d}x}{\mathrm{d}y}
	\leq C\aiminnorm{\boldsymbol u}^2\aiminnorm{\nabla\boldsymbol u}^2 + C\aiminnorm{\nabla\hat\theta}^{2}\aiminnorm{\Delta\hat\theta}^2 +\frac18\aiminnorm{\hat\theta_t}^2;
\end{split}\end{equation*}
using H\"older's inequality, the Sobolev embedding, and Young's inequality, the last two terms $I_3$ and $I_4$ are estimated as:
\begin{equation*}
	\begin{split}
		I_3\leq \aiminnorm{\kappa(\theta)}_4\aiminnorm{u_2}_4\aiminnorm{\hat\theta_t}
		\leq C\aiminnorm{\kappa(\theta)}_4^2 \aiminnorm{\nabla \boldsymbol u}^2 + \frac18\aiminnorm{\hat\theta_t}^2;
	\end{split}
\end{equation*}
\begin{equation*}\begin{split}
	I_4\leq\aiminnorm{\kappa'(\theta)}_4 \aiminnorm{\hat\theta_y}_4\aiminnorm{\hat\theta_t}
	\leq C\aiminnorm{\kappa'(\theta)}_4^2  \aiminnorm{\Delta\hat\theta}^2 + \frac18\aiminnorm{\hat\theta_t}^2.
\end{split}\end{equation*}

Combining the estimates for $I_1,\cdots,I_4$, we derive from \eqref{eq2.3.10} that
\begin{equation}\begin{split}\label{eq2.3.12}
	\frac{\mathrm{d}}{\mathrm{d}t}\aiminnorm{\sqrt{\kappa(\theta)}\nabla\hat\theta}^2 + \aiminnorm{\hat\theta_t}^2
	\leq
	C\big( (\aiminnorm{\nabla\hat\theta}^{2}+ \aiminnorm{\kappa'(\theta)}_4^2 )\aiminnorm{\Delta\hat\theta}^2 +(\aiminnorm{\boldsymbol u}^2 + \aiminnorm{\kappa(\theta)}_4^2) \aiminnorm{\nabla \boldsymbol u}^2 \big).
\end{split}\end{equation}
We note that, by H\"older's inequality and the Sobolev embedding:
\[
 \aiminnorm{\sqrt{\kappa(\theta)}\nabla\hat\theta}^2 \leq \aiminnorm{\sqrt{\kappa(\theta)}}_4^2 \aiminnorm{\nabla\hat\theta}_4^2
 \leq C\aiminnorm{\kappa(\theta)}_2^2 \aiminnorm{\Delta\hat\theta}^2,
\]
which, together with Lemma~\ref{lem2.4.1} and Proposition~\ref{prop2.4.1}, implies that
\begin{equation}\label{eq2.3.13}
	\int_t^{t+1}\aiminnorm{\sqrt{\kappa(\theta)}\nabla\hat\theta(s)}^2{\mathrm{d}s} \leq C,\quad\forall\,t\geq  t_0+1.
\end{equation}
We then apply the Uniform Gronwall Lemma~\ref{lem2.1.2} with $y=\aiminnorm{\sqrt{\kappa(\theta)}\nabla\hat\theta(t)}^2$ and utilize the estimates \eqref{eq2.3.8}, \eqref{eq2.2.8},\eqref{eq2.2.10},\eqref{eq2.3.13} and Lemma~\ref{lem2.4.1}; we arrive at
\begin{equation}
	\aiminnorm{\sqrt{\kappa(\theta)}\nabla\hat\theta}\leq C,\quad\forall\,t\geq t_0+2.
\end{equation}
Integrating \eqref{eq2.3.12} in time on $(t,t+1)$, we then obtain the control of the time average of $\hat\theta_t$:
\begin{equation}\label{eq2.3.15}
	\int_t^{t+1}\aiminnorm{\hat\theta_t(s)}^2{\mathrm{d}s} \leq C,\quad\forall\,t\geq t_0+2.
\end{equation}

We now turn to the uniform estimates of $\boldsymbol u$ in $H^1(\Omega)$; here we could not mimic the procedure for estimating $\theta$ in $H^1(\Omega)$ by defining a new quantity $\hat\theta$ because of the presence of the pressure term in the velocity equation. However, Proposition~\ref{prop2.4.1} provides us enough regularity for $\theta$ to show the uniform estimate of $\boldsymbol u$ in $H^1(\Omega)$.
\begin{proposition}\label{prop2.4.2}
	Under the assumptions of Theorem~\ref{thm1.3}, we have
\begin{equation}\label{eq2.3.18}
	\aiminnorm{\nabla\boldsymbol u(t)} \leq C,\quad\quad \forall\,t\geq  t_0+2,
\end{equation}
and
\begin{equation}\label{eq2.3.20}
	\int_t^{t+1}\aiminnorm{\Delta\boldsymbol u(s)}^2{\mathrm{d}s} \leq C,\quad\quad \forall\,t\geq  t_0+2.
\end{equation}
\end{proposition}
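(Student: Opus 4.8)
The plan is to derive the $H^1$-estimate for $\boldsymbol u$ by testing the velocity equation against $-\Delta\boldsymbol u = A_1\boldsymbol u$ (using \eqref{eq1.1.6}) and then applying the Uniform Gronwall Lemma, mirroring the structure used for $\hat\theta$ in Proposition~\ref{prop2.4.1}. First I would take the inner product of \eqref{eq1.1.3}$_1$ with $-\Delta\boldsymbol u$ in $L^2(\Omega)$. The decisive point here is that the pressure term drops out: by the identity \eqref{eq1.1.10}, we have $-\int_\Omega\nabla p\cdot\Delta\boldsymbol u\,{\mathrm{d}x}{\mathrm{d}y}=0$, which is precisely why the free-slip boundary conditions (through $\Delta\boldsymbol u\in H_1$) make this estimate possible without controlling the pressure. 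After integrating by parts in the viscous term, this yields an identity of the schematic form
\begin{equation*}
	\frac12\frac{\mathrm{d}}{\mathrm{d}t}\aiminnorm{\nabla\boldsymbol u}^2 + \int_\Omega\nu(\theta)\aiminabs{\Delta\boldsymbol u}^2{\mathrm{d}x}{\mathrm{d}y}
	\leq \int_\Omega\big[\aiminabs{\nu'(\theta)\nabla\theta\cdot\nabla\boldsymbol u}+\aiminabs{\boldsymbol u\cdot\nabla\boldsymbol u}+\aiminabs{\theta}+(1-y)\big]\aiminabs{\Delta\boldsymbol u}{\mathrm{d}x}{\mathrm{d}y},
\end{equation*}
where the left-hand side is coercive by $\nu(\theta)\geq\underline\nu$.

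Next I would estimate the three genuine nonlinear terms on the right, absorbing a fraction of $\underline\nu\aiminnorm{\Delta\boldsymbol u}^2$ at each step via Young's inequality. The convective term $\boldsymbol u\cdot\nabla\boldsymbol u$ is handled exactly as the analogous $\boldsymbol u\cdot\nabla\hat\theta$ term was in \eqref{eq2.3.5.1}: applying Lemma~\ref{lem2.1.4} with $f=\boldsymbol u$, $g=\nabla\boldsymbol u$ gives a bound $C\aiminnorm{\boldsymbol u}^2\aiminnorm{\nabla\boldsymbol u}^2\aiminnorm{\nabla\boldsymbol u}^2+\tfrac{\underline\nu}{8}\aiminnorm{\Delta\boldsymbol u}^2$, i.e. a coefficient $C\aiminnorm{\boldsymbol u}^2\aiminnorm{\nabla\boldsymbol u}^2$ multiplying $y=\aiminnorm{\nabla\boldsymbol u}^2$. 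The buoyancy term $\theta\boldsymbol e_2+(1-y)\boldsymbol e_2$ is harmless: by Cauchy--Schwarz and Young it contributes $C(\aiminnorm{\theta}^2+1)+\tfrac{\underline\nu}{8}\aiminnorm{\Delta\boldsymbol u}^2$, both pieces controlled by the $L^2$-bound \eqref{eq2.2.8}. The term I expect to be the main obstacle is the viscosity term $\nu'(\theta)\nabla\theta\cdot\nabla\boldsymbol u$, since it couples the temperature gradient to the velocity. Here I would use H\"older's inequality and the bound $\aiminabs{\nu'(\theta)}\leq c_0(\aiminabs{\theta}^r+1)$ from \eqref{asp1.1.2}$_1$; the cleanest route is to exploit the now-established bound on $\aiminnorm{\nabla\theta}$ (Proposition~\ref{prop2.4.1}, valid for $t\geq t_0+1$) together with the uniform $L^\infty_tL^p_x$ control of $\theta$ from \eqref{eq2.2.8}, estimating $\aiminnorm{\nu'(\theta)\nabla\theta}$ in an appropriate $L^q$ and pairing it with $\aiminnorm{\nabla\boldsymbol u}_{q'}$ via Ladyzhenskaya/Sobolev before absorbing $\tfrac{\underline\nu}{8}\aiminnorm{\Delta\boldsymbol u}^2$. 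The residual factor multiplying $\aiminnorm{\nabla\boldsymbol u}^2$ will be a polynomial in the already-bounded quantities $\aiminnorm{\theta}_p$ and $\aiminnorm{\nabla\theta}$, hence $\leq C$.

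Having collected all terms, the differential inequality takes the form $\frac{\mathrm{d}}{\mathrm{d}t}y+\underline\nu\aiminnorm{\Delta\boldsymbol u}^2\leq g\,y+h$ with $y=\aiminnorm{\nabla\boldsymbol u}^2$, where $g=C(\aiminnorm{\boldsymbol u}^2\aiminnorm{\nabla\boldsymbol u}^2+1)$ and $h=C(\aiminnorm{\theta}^2+\text{bounded terms})$. To apply the Uniform Gronwall Lemma~\ref{lem2.1.2}, I would temporarily drop the good term $\underline\nu\aiminnorm{\Delta\boldsymbol u}^2$ and verify the three integral hypotheses on $(t,t+1)$: $\int g\leq a_1$ follows from the time-average control \eqref{eq2.2.10} of $\aiminnorm{\nabla\boldsymbol u}^2$ (with the uniform $L^2$-bound \eqref{eq2.2.8} on $\aiminnorm{\boldsymbol u}$), $\int h\leq a_2$ follows from \eqref{eq2.2.8}, and $\int y\leq a_3$ is again \eqref{eq2.2.10}. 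All three hold uniformly for $t\geq t_0$, so the Lemma yields the uniform bound \eqref{eq2.3.18} for $t\geq t_0+2$ (the extra time lag accommodating the regularity of $\theta$ needed from $t\geq t_0+1$). Finally, reinstating $\underline\nu\aiminnorm{\Delta\boldsymbol u}^2$ on the left and integrating the differential inequality over $(t,t+1)$, the now-established bound \eqref{eq2.3.18} together with \eqref{eq2.2.8} and \eqref{eq2.2.10} controls the right-hand side, giving the time-average estimate \eqref{eq2.3.20}.
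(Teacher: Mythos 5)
Your skeleton coincides with the paper's: test \eqref{eq1.1.3}$_1$ with $-\Delta\boldsymbol u$, eliminate the pressure via \eqref{eq1.1.10}, estimate the convective and buoyancy terms exactly as you do, and close with the Uniform Gronwall Lemma using \eqref{eq2.2.8} and \eqref{eq2.2.10}. The gap is in the one term you yourself flag as the main obstacle, $\int_\Omega \nu'(\theta)\nabla\theta\cdot\nabla\boldsymbol u\cdot\Delta\boldsymbol u\,{\mathrm{d}x}{\mathrm{d}y}$: the route you propose --- H\"older with $\aiminabs{\nu'(\theta)}\le c_0(\aiminabs{\theta}^r+1)$ from \eqref{asp1.1.2}$_1$, the $L^p$ bounds on $\theta$, and the $H^1$ bound from Proposition~\ref{prop2.4.1} --- cannot close, and your claim that the residual coefficient of $\aiminnorm{\nabla\boldsymbol u}^2$ is a constant is not achievable even in the paper's own argument. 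Count exponents: you must split $\aiminnorm{\nu'(\theta)}_a\aiminnorm{\nabla\theta}_b\aiminnorm{\nabla\boldsymbol u}_c\aiminnorm{\Delta\boldsymbol u}$ with $1/a+1/b+1/c=1/2$. Since $\nabla\theta$ is uniformly bounded only in $L^2$, any $b>2$ forces the interpolation $\aiminnorm{\nabla\theta}_b\le C\aiminnorm{\Delta\theta}^{1-2/b}$, and $\aiminnorm{\nabla\boldsymbol u}_c\le C\aiminnorm{\nabla\boldsymbol u}^{2/c}\aiminnorm{\Delta\boldsymbol u}^{1-2/c}$; after absorbing $\aiminnorm{\Delta\boldsymbol u}^{2-2/c}$ by Young, the coefficient multiplying $\aiminnorm{\nabla\boldsymbol u}^2$ is $\aiminnorm{\Delta\theta}^{c(1-2/b)}$, and the constraint $1/a+1/b+1/c=1/2$ forces $c(1-2/b)>2$ for every finite $a$. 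But at this stage only $\int_t^{t+1}\aiminnorm{\Delta\theta(s)}^2{\mathrm{d}s}\le C$ is available (Corollary~\ref{cor3.4.1}), so the Uniform Gronwall hypothesis $\int_t^{t+1} g\,{\mathrm{d}s}\le a_1$ fails --- marginally, but genuinely. The estimate would close only if $\nu'(\theta)$ were uniformly bounded in $L^\infty$, or $\nabla\theta$ uniformly bounded in some $L^b$ with $b>2$; neither follows from \eqref{asp1.1.2} and the $L^{p_0}$ bounds, and the $L^\infty$ bound of Remark~\ref{rmk2.0.1} depends on the initial data, which defeats the purpose of the uniform estimates.

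This is precisely where the paper invokes the extra hypothesis \eqref{asp1.1.3} together with the auxiliary function $\hat\theta$: since $\nabla\hat\theta=\kappa(\theta)\nabla\theta$, one writes $\aiminabs{\nu'(\theta)\nabla\theta}=\frac{\aiminabs{\nu'(\theta)}}{\kappa(\theta)}\aiminabs{\nabla\hat\theta}\le \tilde c_0\aiminabs{\nabla\hat\theta}$, and then Lemma~\ref{lem2.1.3} with $f=\nabla\hat\theta$, $g=\nabla\boldsymbol u$ gives the bound $C\aiminnorm{\nabla\hat\theta}^{2}\aiminnorm{\Delta\hat\theta}^{2}\aiminnorm{\nabla\boldsymbol u}^{2}+\frac{\underline\nu}{8}\aiminnorm{\Delta\boldsymbol u}^2$ (this is \eqref{eq2.3.42.1}). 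Consequently the correct differential inequality \eqref{eq2.3.44} has Gronwall coefficient $g(t)=C\big(\aiminnorm{\nabla\hat\theta}^{2}\aiminnorm{\Delta\hat\theta}^{2}+\aiminnorm{\boldsymbol u}^2\aiminnorm{\nabla\boldsymbol u}^2\big)$, which is \emph{not} pointwise bounded in time; only its time average is bounded, via \eqref{eq2.3.8}--\eqref{eq2.3.9}. This is exactly why Proposition~\ref{prop2.4.1} is proved for $\hat\theta$ (not just $\theta$) and why the Uniform Gronwall Lemma, rather than a pointwise bound on the coefficient, is the tool that closes the argument. Your remaining steps --- the convective term via Lemma~\ref{lem2.1.4}, the buoyancy terms, the time shift to $t_0+2$, and the final integration over $(t,t+1)$ to obtain \eqref{eq2.3.20} --- match the paper.
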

\begin{proof}
Taking the inner product of \eqref{eq1.1.3}$_1$ with $-\Delta \boldsymbol u$ in $L^2(\Omega)$ yields
\begin{equation}\begin{split}\label{eq2.3.41}
\frac12\frac{\mathrm{d}}{\mathrm{d}t}\aiminnorm{\nabla\boldsymbol u}^2 + &\int_\Omega \nu(\theta)\aiminabs{\Delta\boldsymbol u}^2{\mathrm{d}x}{\mathrm{d}y} = -\int_\Omega \nu'(\theta)\nabla\theta\cdot\nabla\boldsymbol u\cdot\Delta\boldsymbol u{\mathrm{d}x}{\mathrm{d}y} \\
& + \int_\Omega\boldsymbol u\cdot\nabla\boldsymbol u\cdot\Delta \boldsymbol u{\mathrm{d}x}{\mathrm{d}y} - \int_\Omega\theta\boldsymbol e_2\cdot\Delta\boldsymbol u{\mathrm{d}x}{\mathrm{d}y} - \int_\Omega (1-y)\boldsymbol e_2\cdot\Delta \boldsymbol u{\mathrm{d}x}{\mathrm{d}y},
\end{split}\end{equation}
where the pressure term disappears thanks to the free-slip boundary conditions for $\boldsymbol u$, see \eqref{eq1.1.10}.

With the assumption \eqref{asp1.1.3} and applying Lemma~\ref{lem2.1.3} with $f=\nabla\hat\theta$ and $g=\nabla\boldsymbol u$, we find that
\begin{equation}\begin{split}\label{eq2.3.42.1}
\int_\Omega \aiminabs{\nu'(\theta)\nabla\theta\cdot\nabla\boldsymbol u\cdot\Delta\boldsymbol u}{\mathrm{d}x}{\mathrm{d}y}
&\leq \int_\Omega \frac{\aiminabs{\nu'(\theta)}}{\kappa(\theta)}\aiminabs{\nabla\hat\theta\cdot\nabla\boldsymbol u\cdot\Delta\boldsymbol u}{\mathrm{d}x}{\mathrm{d}y}\\
&\leq C\aiminnorm{\nabla\hat\theta}^{2}\aiminnorm{\Delta\hat\theta}^{2}\aiminnorm{\nabla\boldsymbol u}^{2} + \frac{\underline\nu}{8}\aiminnorm{\Delta\boldsymbol u}^2;
\end{split}\end{equation}
applying Lemma~\ref{lem2.1.3} with $f=\boldsymbol u$ and $g=\nabla\boldsymbol u$ yields
\begin{equation}\begin{split}
\int_\Omega\aiminabs{\boldsymbol u\cdot\nabla\boldsymbol u\cdot\Delta \boldsymbol u}{\mathrm{d}x}{\mathrm{d}y}
\leq C\aiminnorm{\boldsymbol u}^2\aiminnorm{\nabla\boldsymbol u}^4 +   \frac{\underline\nu}{8}\aiminnorm{\Delta\boldsymbol u}^2;
\end{split}\end{equation}
the last two terms in the right-hand side of \eqref{eq2.3.41} are easy:
\begin{equation}\begin{split}\label{eq2.3.42.3}
 \int_\Omega\aiminabs{\theta\boldsymbol e_2\cdot\Delta\boldsymbol u}{\mathrm{d}x}{\mathrm{d}y} &\leq \aiminnorm{\theta}\aiminnorm{\Delta\boldsymbol u} \leq C\aiminnorm{\theta}^2 +   \frac{\underline\nu}{8}\aiminnorm{\Delta\boldsymbol u}^2,\\
 \int_\Omega \aiminabs{(1-y)\boldsymbol e_2\cdot\Delta \boldsymbol u}{\mathrm{d}x}{\mathrm{d}y}&\leq C\aiminnorm{\Delta\boldsymbol u}\leq C+   \frac{\underline\nu}{8}\aiminnorm{\Delta\boldsymbol u}^2.
\end{split}\end{equation}
By inserting the estimates \eqref{eq2.3.42.1}-\eqref{eq2.3.42.3} into \eqref{eq2.3.41}, we arrive at
\begin{equation}\begin{split}\label{eq2.3.44}
\frac{\mathrm{d}}{\mathrm{d}t}\aiminnorm{\nabla\boldsymbol u}^2 +  \underline\nu\aiminnorm{\Delta\boldsymbol u}^2
\leq C+C\aiminnorm{\theta}^2 + C\big( \aiminnorm{\nabla\hat\theta}^{2}\aiminnorm{\Delta\hat\theta}^{2} +  \aiminnorm{\boldsymbol u}^2\aiminnorm{\nabla\boldsymbol u}^2\big)\aiminnorm{\nabla\boldsymbol u}^2.
\end{split}\end{equation}
We temporarily ignore the second term involving $\Delta\boldsymbol u$ in the left-hand side of \eqref{eq2.3.44}, combine the estimates \eqref{eq2.2.8}, \eqref{eq2.2.10}, \eqref{eq2.3.8}, and \eqref{eq2.3.9}, and employ the Uniform Gronwall Lemma~\ref{lem2.1.2} with $y=\aiminnorm{\nabla\boldsymbol u(t)}^2$;
we obtain the uniform estimate \eqref{eq2.3.18} for $\aiminnorm{\nabla\boldsymbol u}$ and hence the absorbing ball for $\boldsymbol u$ in $H^1(\Omega)$. Integrating \eqref{eq2.3.44} in time on $(t,t+1)$, we then obtain the control \eqref{eq2.3.20} of the time average of $\aiminnorm{\Delta\boldsymbol u}^2$. This completes the proof of Proposition~\ref{prop2.4.2}.
\end{proof}

We now aim to obtain a control of the time average of $\aiminnorm{\boldsymbol u_t}^2$.
Taking the inner product of \eqref{eq1.1.3}$_1$ with $\boldsymbol u_t$ in $L^2(\Omega)$ yields
\begin{equation}\label{eq2.3.16}
	\begin{split}
		\frac12\frac{\mathrm{d}}{\mathrm{d}t} \int_\Omega \nu(\theta)\aiminabs{\nabla\boldsymbol u}^2{\mathrm{d}x}{\mathrm{d}y} + \aiminnorm{\boldsymbol u_t}^2&=\frac12\int_\Omega \nu'(\theta)\theta_t \aiminabs{\nabla\boldsymbol u}^2{\mathrm{d}x}{\mathrm{d}y} - \int_\Omega \boldsymbol u\cdot\nabla\boldsymbol u\cdot\boldsymbol u_t{\mathrm{d}x}{\mathrm{d}y} \\
		&\quad\; + \int_\Omega \theta\boldsymbol e_2\cdot\boldsymbol u_t{\mathrm{d}x}{\mathrm{d}y} +\int_\Omega (1-y)\boldsymbol e_2\cdot\boldsymbol u_t{\mathrm{d}x}{\mathrm{d}y} \\
		&=:I_1+I_2+I_3+I_4.
	\end{split}
\end{equation}
With the assumption \eqref{asp1.1.3} and using H\"older's inequality and  Ladyzhenskaya's inequality, we estimate the first term $I_1$ in the right-hand side of \eqref{eq2.3.16} as:
\begin{equation*}
	\begin{split}
I_1\leq\frac12\int_\Omega\aiminabs{ \nu'(\theta)\theta_t }\aiminabs{\nabla\boldsymbol u}^2{\mathrm{d}x}{\mathrm{d}y}
\leq \frac12\int_\Omega \frac{\aiminabs{\nu'(\theta)} }{\kappa(\theta)} \aiminabs{\hat\theta_t}\aiminabs{\nabla\boldsymbol u}^2{\mathrm{d}x}{\mathrm{d}y}
 \leq C\aiminnorm{\hat\theta_t}\aiminnorm{\nabla\boldsymbol u}_4^2 \\
 \leq C\aiminnorm{\hat\theta_t}\aiminnorm{\nabla\boldsymbol u}  \aiminnorm{\Delta\boldsymbol u}
 \leq \aiminnorm{\hat\theta_t}^2 + C\aiminnorm{\nabla\boldsymbol u}^2  \aiminnorm{\Delta\boldsymbol u}^2;		
	\end{split}
\end{equation*}
using H\"older's inequality and the Sobolev embedding, we estimate $I_2$ as:
\begin{equation*}
	\begin{split}
		I_2\leq\int_\Omega \aiminabs{\boldsymbol u\cdot\nabla\boldsymbol u\cdot\boldsymbol u_t}{\mathrm{d}x}{\mathrm{d}y}
		&\leq \aiminnorm{\boldsymbol u_t}\aiminnorm{\boldsymbol u}_4 \aiminnorm{\nabla\boldsymbol u}_4
		\leq C\aiminnorm{\boldsymbol u_t} \aiminnorm{\nabla\boldsymbol u}\aiminnorm{\Delta\boldsymbol u}\\
		&\leq \frac18\aiminnorm{\boldsymbol u_t}^2 + C\aiminnorm{\nabla\boldsymbol u}^2 \aiminnorm{\Delta\boldsymbol u}^2;
	\end{split}
\end{equation*}
the last two terms $I_3$ and $I_4$ in the right-hand side of \eqref{eq2.3.16} are easy:
\begin{equation*}
	I_3\leq \int_\Omega \aiminabs{\theta\boldsymbol e_2\cdot\boldsymbol u_t}{\mathrm{d}x}{\mathrm{d}y}\leq \aiminnorm{\theta}\aiminnorm{\boldsymbol u_t}\leq \frac18\aiminnorm{\boldsymbol u_t}^2 + \aiminnorm{\theta}^2,
\end{equation*}
\begin{equation*}
	I_4\leq \int_\Omega \aiminabs{(1-y)\boldsymbol e_2\cdot\boldsymbol u_t}{\mathrm{d}x}{\mathrm{d}y} \leq \aiminnorm{\boldsymbol u_t} \leq \frac18\aiminnorm{\boldsymbol u_t}^2 + 2.
\end{equation*}
By inserting these estimates for $I_1,\cdots, I_4$ into \eqref{eq2.3.16}, we arrive at
\begin{equation}\begin{split}\label{eq2.3.17}
	\frac{\mathrm{d}}{\mathrm{d}t} \int_\Omega \nu(\theta)\aiminabs{\nabla\boldsymbol u}^2{\mathrm{d}x}{\mathrm{d}y} + \aiminnorm{\boldsymbol u_t}^2
	\leq 4+2 \aiminnorm{\theta}^2 + 2\aiminnorm{\hat\theta_t}^2+  C\aiminnorm{\nabla\boldsymbol u}^2\aiminnorm{\Delta\boldsymbol u}^2.
\end{split}\end{equation}
We apply the Uniform Gronwall Lemma~\ref{lem2.1.2} with $y=\aiminnorm{\sqrt{\nu(\theta)}\nabla\boldsymbol u}^2$ and employ the uniform estimates \eqref{eq2.2.8}, \eqref{eq2.2.10p}, \eqref{eq2.3.15}, and \eqref{eq2.3.18}-\eqref{eq2.3.20}; we find
\begin{equation}
	\aiminnorm{\sqrt{\nu(\theta)}\nabla\boldsymbol u}  \leq C,\quad\quad \forall\,t\geq  t_0+3.
\end{equation}
Integrating \eqref{eq2.3.17} in time on $(t,t+1)$, we obtain the control of the time average of $\boldsymbol u_t$:
\begin{equation}\label{eq2.3.19}
	\int_t^{t+1}\aiminnorm{\boldsymbol u_t(s)}^2{\mathrm{d}s} \leq C,\quad\quad \forall\,t\geq  t_0+3.
\end{equation}

\subsection{$H^2$-estimates}
Uniform $H^2$-estimates for $(\boldsymbol u,\theta)$ can be obtained once we derive uniform $L^2$-estimates for $(\boldsymbol u_t,\theta_t)$. Actually, we will work with $\hat\theta$ instead $\theta$; for that reason, we take the time derivative of equations \eqref{eq1.1.3}$_1$ and \eqref{eq2.3.2}$_2$, and find that
\begin{equation}\label{eq2.4.1}
	\begin{cases}
		\partial_t\boldsymbol u_t -{\mathrm{div}\,}(\nu(\theta)\nabla\boldsymbol u_t) + \boldsymbol u\cdot\nabla\boldsymbol u_t  + \nabla p_t = \theta_t\boldsymbol e_2 - \boldsymbol u_t\cdot\nabla\boldsymbol u + {\mathrm{div}\,}(\nu'(\theta)\theta_t\nabla\boldsymbol u),\\
		\partial_t\hat\theta_t - \kappa(\theta)\Delta\hat\theta_t + \kappa'(\theta)\theta_t\Delta\hat\theta + \boldsymbol u_t\cdot\nabla\hat\theta + \boldsymbol u\cdot\nabla\hat\theta_t\\
		\hspace{100pt}= \kappa'(\theta)\theta_tu_2 +\kappa(\theta)u_{2,t} -\kappa'(\theta)\hat\theta_{t,y} - \kappa''(\theta)\theta_t\hat\theta_y
	\end{cases}
\end{equation}

\begin{proposition}\label{prop2.5.1}
	Under the assumptions of Theorem~\ref{thm1.3}, we have
\begin{equation}\label{eq2.4.35}
	\aiminnorm{\hat\theta_t(t)},\;\aiminnorm{\theta_t(t)} \leq C,\quad\quad \forall\,t\geq  t_0+4,
\end{equation}
and
\begin{equation}\label{eq2.4.36}
	\int_t^{t+1}\aiminnorm{\sqrt{\kappa(\theta)}\nabla\hat\theta_t(s) }^2 {\mathrm{d}s}\leq C,\quad\quad \forall\,t\geq  t_0+4.
\end{equation}	
\end{proposition}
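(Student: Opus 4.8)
The plan is to derive a differential inequality for $y(t)=\aiminnorm{\hat\theta_t}^2$ of the exact form required by the Uniform Gronwall Lemma~\ref{lem2.1.2}, and then to read off both \eqref{eq2.4.35} and \eqref{eq2.4.36} from it. Concretely, I would take the inner product of the time-differentiated temperature equation \eqref{eq2.4.1}$_2$ with $\hat\theta_t$ in $L^2(\Omega)$. The parabolic term $-\int_\Omega\kappa(\theta)\Delta\hat\theta_t\,\hat\theta_t$, integrated by parts (the boundary term vanishing because $\hat\theta_t|_{\partial\Omega}=0$), produces the good dissipation $\aiminnorm{\sqrt{\kappa(\theta)}\nabla\hat\theta_t}^2$ together with a remainder $\int_\Omega\kappa'(\theta)\nabla\theta\cdot\nabla\hat\theta_t\,\hat\theta_t$; after moving the dissipation to the left-hand side, the target is
\begin{equation*}
\frac{\mathrm d}{\mathrm dt}\aiminnorm{\hat\theta_t}^2 + \underline\kappa\aiminnorm{\nabla\hat\theta_t}^2 \leq g(t)\aiminnorm{\hat\theta_t}^2 + h(t),
\end{equation*}
where $g,h\geq 0$ must have uniformly bounded integrals on every unit time interval. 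Since $\int_t^{t+1}\aiminnorm{\hat\theta_t}^2\,\mathrm ds\leq C$ is already available from \eqref{eq2.3.15}, this inequality will immediately yield the pointwise bound \eqref{eq2.4.35} for $\hat\theta_t$ (and then for $\theta_t$ via \eqref{eq2.3.30.2}), while integrating it over $(t,t+1)$ and using that pointwise bound gives \eqref{eq2.4.36}.

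The bulk of the work is estimating the remaining terms so that every surviving factor is either absorbable into $\underline\kappa\aiminnorm{\nabla\hat\theta_t}^2$ or appears with a coefficient of bounded unit-interval integral. Two structural simplifications drive this. First, the convective term $\int_\Omega\boldsymbol u\cdot\nabla\hat\theta_t\,\hat\theta_t$ vanishes identically by ${\mathrm{div}\,}\boldsymbol u=0$ and $\boldsymbol u\cdot\boldsymbol n=0$. Second, assumption \eqref{asp1.1.3} lets me treat $\kappa'(\theta)/\kappa(\theta)$ and $\kappa''(\theta)/\kappa(\theta)$ as bounded multipliers and, crucially, replace every occurrence of $\theta_t$ by $\hat\theta_t/\kappa(\theta)$ (so that $\kappa'(\theta)\theta_t=\frac{\kappa'(\theta)}{\kappa(\theta)}\hat\theta_t$, and similarly for the other coupling terms). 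With these reductions the contributions of $\kappa'(\theta)\theta_t\Delta\hat\theta$, $\kappa'(\theta)\theta_tu_2$, $\kappa'(\theta)\hat\theta_{t,y}$ and $\kappa''(\theta)\theta_t\hat\theta_y$, as well as the remainder $\int_\Omega\kappa'(\theta)\nabla\theta\cdot\nabla\hat\theta_t\,\hat\theta_t$, all reduce to integrals of triple products that I would control by Hölder's inequality, Ladyzhenskaya's inequality, and Young's inequality, absorbing one factor of $\aiminnorm{\nabla\hat\theta_t}$ into the dissipation and leaving coefficients built from $\aiminnorm{\nabla\hat\theta}$, $\aiminnorm{\Delta\hat\theta}$, $\aiminnorm{\boldsymbol u}$, $\aiminnorm{\kappa'(\theta)}_4$ and $\aiminnorm{\kappa(\theta)}_4$. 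These are exactly the quantities controlled uniformly, or in time average, by \eqref{eq2.2.8}, \eqref{eq2.2.10}, \eqref{eq2.3.8}, \eqref{eq2.3.9}, Lemma~\ref{lem2.4.1} and Proposition~\ref{prop2.4.1}.

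The main obstacle is the source term $\int_\Omega\kappa(\theta)u_{2,t}\,\hat\theta_t$ (and, similarly, the term $\int_\Omega\boldsymbol u_t\cdot\nabla\hat\theta\,\hat\theta_t$), because these involve the velocity time derivative $\boldsymbol u_t$, for which I only have the time-averaged control \eqref{eq2.3.19} and no pointwise or $H^1$ bound. Here a plain Young splitting leaves a coefficient like $\aiminnorm{\boldsymbol u_t}^{4/3}$ multiplying $\aiminnorm{\hat\theta_t}^2$, which is still acceptable for the Uniform Gronwall Lemma precisely because Hölder's inequality in time gives $\int_t^{t+1}\aiminnorm{\boldsymbol u_t}^{4/3}\,\mathrm ds\leq\big(\int_t^{t+1}\aiminnorm{\boldsymbol u_t}^2\,\mathrm ds\big)^{2/3}\leq C$; the careful bookkeeping of these fractional time exponents is the delicate point. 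Once all contributions are collected into the displayed inequality with $g,h$ of bounded unit-interval integral, Lemma~\ref{lem2.1.2} applied with $y=\aiminnorm{\hat\theta_t}^2$ yields \eqref{eq2.4.35} for $t\geq t_0+4$ (the delay accumulating from \eqref{eq2.3.15} and \eqref{eq2.3.19}), and a final integration over $(t,t+1)$ together with the bound just obtained yields \eqref{eq2.4.36}.
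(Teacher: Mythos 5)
Your strategy is the same as the paper's: test the time-differentiated equation \eqref{eq2.4.1}$_2$ with $\hat\theta_t$, kill the convection term by incompressibility, use \eqref{asp1.1.3} to rewrite $\theta_t=\hat\theta_t/\kappa(\theta)$, estimate the remaining trilinear terms by H\"older--Ladyzhenskaya--Young, and close with the Uniform Gronwall Lemma~\ref{lem2.1.2} using \eqref{eq2.3.15}, \eqref{eq2.3.19}, Propositions~\ref{prop2.3.1}--\ref{prop2.4.1} and Lemma~\ref{lem2.4.1}; this is exactly the paper's argument (cf.\ \eqref{eq2.4.31}--\eqref{eq2.4.34}). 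Your treatment of the $\boldsymbol u_t$-terms also works, though it is more complicated than necessary: since $\hat\theta_t\in H_0^1(\Omega)$, one has $\aiminnorm{\hat\theta_t}_4\leq C\aiminnorm{\nabla\hat\theta_t}$, so in $\int_\Omega\kappa(\theta)u_{2,t}\hat\theta_t{\mathrm{d}x}{\mathrm{d}y}$ (and similarly in $\int_\Omega\boldsymbol u_t\cdot\nabla\hat\theta\,\hat\theta_t{\mathrm{d}x}{\mathrm{d}y}$) every factor involving $\hat\theta_t$ can be paired with the dissipation, leaving a pure forcing term of the form $C(1+\aiminnorm{\kappa(\theta)}_4^2)\aiminnorm{\boldsymbol u_t}^2$ in $h(t)$, whose unit-interval integral is finite by \eqref{eq2.3.19} and Lemma~\ref{lem2.4.1}; no fractional-power bookkeeping or H\"older in time is needed, because the Uniform Gronwall Lemma never requires $\boldsymbol u_t$ to multiply $\aiminnorm{\hat\theta_t}^2$ at all.

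One point does need fixing. Your displayed target inequality keeps only $\underline\kappa\aiminnorm{\nabla\hat\theta_t}^2$ on the left, and integrating it over $(t,t+1)$ yields $\int_t^{t+1}\aiminnorm{\nabla\hat\theta_t(s)}^2{\mathrm{d}s}\leq C$, which is strictly weaker than \eqref{eq2.4.36}: the assertion there concerns $\aiminnorm{\sqrt{\kappa(\theta)}\nabla\hat\theta_t}$, and at this stage of the bootstrap there is no pointwise upper bound on $\kappa(\theta)$ (the bound $\kappa(\theta)\leq\bar\kappa$ in \eqref{eq2.4.42} only becomes available after the $H^2$-estimate of Proposition~\ref{prop2.5.3}), so the unweighted bound does not imply the weighted one. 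The remedy is simply not to discard the weight: absorb the various $\epsilon\aiminnorm{\nabla\hat\theta_t}^2$ pieces into a fraction of $\int_\Omega\kappa(\theta)\aiminabs{\nabla\hat\theta_t}^2{\mathrm{d}x}{\mathrm{d}y}\geq\underline\kappa\aiminnorm{\nabla\hat\theta_t}^2$ and retain $\aiminnorm{\sqrt{\kappa(\theta)}\nabla\hat\theta_t}^2$ on the left-hand side, as the paper does in \eqref{eq2.4.34}; with that modification your integration over $(t,t+1)$ gives \eqref{eq2.4.36} exactly as intended.
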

\begin{proof}	
Taking the inner product of \eqref{eq2.4.1}$_2$ with $\hat\theta_t$ in $L^2(\Omega)$ and integrating by parts, we obtain that
\begin{equation}\begin{split}\label{eq2.4.31}
	\frac12\frac{\mathrm{d}}{\mathrm{d}t}\aiminnorm{\hat\theta_t}^2 + \int_\Omega\kappa(\theta)\aiminabs{\nabla\hat\theta_t}^2{\mathrm{d}x}{\mathrm{d}y}
	=
	&-\int_\Omega \kappa'(\theta)\nabla\theta\nabla\hat\theta_t\hat\theta_t{\mathrm{d}x}{\mathrm{d}y}
	-\int_\Omega \kappa'(\theta)\theta_t\Delta\hat\theta\hat\theta_t{\mathrm{d}x}{\mathrm{d}y}\\
	&-\int_\Omega \boldsymbol u_t\cdot \nabla\hat\theta\hat\theta_t{\mathrm{d}x}{\mathrm{d}y}
	-\int_\Omega \boldsymbol u\cdot\nabla\hat\theta_t\hat\theta_t{\mathrm{d}x}{\mathrm{d}y}\\
	&+\int_\Omega \kappa'(\theta)\theta_tu_2\hat\theta_t{\mathrm{d}x}{\mathrm{d}y}
	+\int_\Omega \kappa(\theta)u_{2,t}\hat\theta_t{\mathrm{d}x}{\mathrm{d}y}\\
	&-\int_\Omega \kappa'(\theta)\hat\theta_{t,y}{\mathrm{d}x}{\mathrm{d}y}
	-\int_\Omega \kappa''(\theta)\theta_t\hat\theta_y{\mathrm{d}x}{\mathrm{d}y}\\
	=&I_1+\cdots+I_8.
\end{split}\end{equation}
With the assumption \eqref{asp1.1.3} and applying Lemma~\ref{lem2.1.3} with $f=\nabla\hat\theta$ and $g=\hat\theta_t$, we find that
\begin{equation*}\begin{split}
	I_1\leq \int_\Omega \frac{ \aiminabs{\kappa'(\theta)} }{\kappa(\theta)} \aiminabs{\nabla\hat\theta\nabla\hat\theta_t\hat\theta_t} {\mathrm{d}x}{\mathrm{d}y}
	\leq C\aiminnorm{\nabla\hat\theta}^2\aiminnorm{\Delta\hat\theta}^2\aiminnorm{\hat\theta_t}^2 + \frac{\underline\kappa}{16}\aiminnorm{\nabla\hat\theta_t}^2.
\end{split}\end{equation*}
With the assumption \eqref{asp1.1.3} and using H\"older's inequality, Ladyzhenskaya's inequality, and Young's inequality, we estimate $I_2$ and $I_3$ as follows:
\begin{equation*}\begin{split}
	I_2
	&\leq \int_\Omega \frac{ \aiminabs{\kappa'(\theta)} }{\kappa(\theta)}\aiminabs{\hat\theta_t}^2\aiminabs{\Delta\hat\theta}{\mathrm{d}x}{\mathrm{d}y}
	\leq C \aiminnorm{\Delta\hat\theta}\aiminnorm{\hat\theta_t}_4^2\\
	&\leq C\aiminnorm{\Delta\hat\theta}\aiminnorm{\hat\theta_t}\aiminnorm{\nabla\hat\theta_t}
	\leq C\aiminnorm{\Delta\hat\theta}^2\aiminnorm{\hat\theta_t}^2 +  \frac{\underline\kappa}{16}\aiminnorm{\nabla\hat\theta_t}^2;
\end{split}\end{equation*}
\begin{equation*}\begin{split}
	I_3
	&\leq \aiminnorm{\boldsymbol u_t}\aiminnorm{\nabla\hat\theta}_4\aiminnorm{\hat\theta_t}_4
	\leq C\aiminnorm{\boldsymbol u_t}\aiminnorm{\nabla\hat\theta}^{1/2}\aiminnorm{\Delta\hat\theta}^{1/2}\aiminnorm{\hat\theta_t}^{1/2}\aiminnorm{\nabla\hat\theta_t}^{1/2}\\
	&\leq \aiminnorm{\boldsymbol u_t}^2 + C \aiminnorm{\nabla\hat\theta}^{2}\aiminnorm{\Delta\hat\theta}^2\aiminnorm{\hat\theta_t}^2 +\frac{\underline\kappa}{16}\aiminnorm{\nabla\hat\theta_t}^2;
\end{split}\end{equation*}
by Green's formula, we have
\begin{equation*}
	I_4 = 0;
\end{equation*}
with the assumption \eqref{asp1.1.3}, we estimate $I_5$ as
\begin{equation*}\begin{split}
	I_5
	&\leq \int_\Omega \frac{ \aiminabs{\kappa'(\theta)} }{\kappa(\theta)}\aiminabs{\hat\theta_t}^2\aiminabs{u_2}{\mathrm{d}x}{\mathrm{d}y}
	\leq C\aiminnorm{u_2}\aiminnorm{\hat\theta_t}_4^2\\
	&\leq C\aiminnorm{\boldsymbol u}\aiminnorm{\hat\theta_t}\aiminnorm{\nabla\hat\theta_t}
	\leq C\aiminnorm{\boldsymbol u}^2\aiminnorm{\hat\theta_t}^2 +\frac{\underline\kappa}{16}\aiminnorm{\nabla\hat\theta_t}^2;
\end{split}\end{equation*}
by H\"older's inequality and the Sobolev embedding, we have
\begin{equation*}\begin{split}
	I_6
	&\leq \aiminnorm{\kappa(\theta)}_4 \aiminnorm{u_{2,t}}\aiminnorm{\hat\theta_t}_4
	\leq C\aiminnorm{\kappa(\theta)}_4 \aiminnorm{\boldsymbol u_{t}} \aiminnorm{\nabla\hat\theta_t}\\
	&\leq C\aiminnorm{\kappa(\theta)}_4^2 \aiminnorm{\boldsymbol u_{t}}^2  +\frac{\underline\kappa}{16}\aiminnorm{\nabla\hat\theta_t}^2;
\end{split}\end{equation*}
by the Cauchy-Schwarz inequality and Young's inequality, we have
\begin{equation*}
	I_7\leq \aiminnorm{\kappa'(\theta)} \aiminnorm{\hat\theta_{t,y}}
	\leq C\aiminnorm{\kappa'(\theta)}^2  +\frac{\underline\kappa}{16}\aiminnorm{\nabla\hat\theta_t}^2;
\end{equation*}
by the assumption \eqref{asp1.1.3} and the Cauchy-Schwarz inequality, we have
\begin{equation*}\begin{split}
	I_8\leq \int_\Omega \frac{ \aiminabs{\kappa''(\theta)} }{\kappa(\theta)}\hat\theta_t\hat\theta_y{\mathrm{d}x}{\mathrm{d}y}
	\leq C\aiminnorm{\hat\theta_t}\aiminnorm{\hat\theta_y}
	\leq C\aiminnorm{\nabla\hat\theta}^2 + \aiminnorm{\hat\theta_t}^2.
\end{split}\end{equation*}
Combining all the estimates for $I_1,\cdots,I_8$, we infer from \eqref{eq2.4.31} that
\begin{equation}\begin{split}\label{eq2.4.34}
	\frac{\mathrm{d}}{\mathrm{d}t}\aiminnorm{\hat\theta_t}^2 + \aiminnorm{\sqrt{\kappa(\theta)}\nabla\hat\theta_t }^2
	&\leq C\big(  \aiminnorm{\nabla\hat\theta}^{2}\aiminnorm{\Delta\hat\theta}^2+\aiminnorm{\Delta\hat\theta}^2+\aiminnorm{\boldsymbol u}^2+1 \big)\aiminnorm{\hat\theta_t}^2\\
	&\quad+C\big[(1+\aiminnorm{\kappa(\theta)}_4^2 )\aiminnorm{\boldsymbol u_{t}}^2  + \aiminnorm{\nabla\hat\theta}^2 + \aiminnorm{\kappa'(\theta)}^2\big].
\end{split}\end{equation}
We then apply the Uniform Gronwall Lemma~\ref{lem2.1.2} with $y=\aiminnorm{\hat\theta_t(t)}^2$ and employ the uniform estimates \eqref{eq2.3.19},\eqref{eq2.3.15}, Propositions~\ref{prop2.3.1}-\ref{prop2.4.1}, and Lemma~\ref{lem2.4.1}; we arrive at the uniform estimate \eqref{eq2.4.35}.
Finally, integrating \eqref{eq2.4.34} in time on $(t,t+1)$ readily yields \eqref{eq2.4.36}. We thus have completed the proof of Proposition~\ref{prop2.5.1}.
\end{proof}

\begin{proposition}\label{prop2.5.2}
	Under the assumptions of Theorem~\ref{thm1.3}, we have
\begin{equation}\label{eq2.4.6}
	\aiminnorm{\boldsymbol u_t}\leq C,\quad\quad \forall\,t\geq  t_0+5,
\end{equation}
and
\begin{equation}\label{eq2.4.5}
	\int_t^{t+1}\aiminnorm{ \nabla\boldsymbol u_t(s) }^2 {\mathrm{d}s}\leq C,\quad\quad \forall\,t\geq  t_0+5.
\end{equation}	
\end{proposition}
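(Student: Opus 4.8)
The plan is to mirror the proof of Proposition~\ref{prop2.5.1}: test the differentiated momentum equation \eqref{eq2.4.1}$_1$ against $\boldsymbol u_t$ in $L^2(\Omega)$, integrate by parts, and close the resulting differential inequality with the Uniform Gronwall Lemma~\ref{lem2.1.2}, taking $y=\aiminnorm{\boldsymbol u_t}^2$. First I would record the structural cancellations. Since ${\mathrm{div}\,}\boldsymbol u_t=0$ and $u_{2,t}\big|_{y=0,1}=0$ (the constraints on $\boldsymbol u$ are preserved under $\partial_t$), the pressure term drops, $\int_\Omega\nabla p_t\cdot\boldsymbol u_t=0$, and the transport term vanishes, $\int_\Omega(\boldsymbol u\cdot\nabla\boldsymbol u_t)\cdot\boldsymbol u_t=0$. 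The viscous term, after integration by parts, produces $\int_\Omega\nu(\theta)\aiminabs{\nabla\boldsymbol u_t}^2\geq\underline\nu\aiminnorm{\nabla\boldsymbol u_t}^2$, its boundary contribution vanishing because $u_{2,t}=0$ and $\partial_y u_{1,t}=0$ on $y=0,1$, exactly as in the derivation of \eqref{eq1.1.10}. This leaves
\begin{equation*}
\tfrac12\tfrac{\mathrm d}{\mathrm dt}\aiminnorm{\boldsymbol u_t}^2+\underline\nu\aiminnorm{\nabla\boldsymbol u_t}^2\leq I_1+I_2+I_3,
\end{equation*}
with $I_1=\int_\Omega\theta_t\boldsymbol e_2\cdot\boldsymbol u_t$, $I_2=-\int_\Omega(\boldsymbol u_t\cdot\nabla\boldsymbol u)\cdot\boldsymbol u_t$, and $I_3=\int_\Omega{\mathrm{div}\,}(\nu'(\theta)\theta_t\nabla\boldsymbol u)\cdot\boldsymbol u_t$.

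Next I would estimate the three terms so that the right-hand side takes the Gronwall form $gy+h$. For $I_1$ I use $\aiminnorm{\theta_t}\leq\underline\kappa^{-1}\aiminnorm{\hat\theta_t}$ from \eqref{eq2.3.30.2} and Poincar\'e to get $I_1\leq C\aiminnorm{\hat\theta_t}^2+\tfrac{\underline\nu}{8}\aiminnorm{\nabla\boldsymbol u_t}^2$. For $I_2$, H\"older and Ladyzhenskaya's inequality (Lemma~\ref{lem2.1.1}) give $I_2\leq C\aiminnorm{\boldsymbol u_t}\aiminnorm{\nabla\boldsymbol u_t}\aiminnorm{\nabla\boldsymbol u}\leq C\aiminnorm{\nabla\boldsymbol u}^2\aiminnorm{\boldsymbol u_t}^2+\tfrac{\underline\nu}{8}\aiminnorm{\nabla\boldsymbol u_t}^2$, the first factor being the coefficient of $y$. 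The crucial term is $I_3$: integrating by parts (boundary terms again vanishing by free-slip) yields $I_3=-\int_\Omega\nu'(\theta)\theta_t\,\nabla\boldsymbol u:\nabla\boldsymbol u_t$, and rewriting $\nu'(\theta)\theta_t=\tfrac{\nu'(\theta)}{\kappa(\theta)}\hat\theta_t$ via \eqref{eq2.3.30.1} lets me invoke assumption \eqref{asp1.1.3} to bound $\tfrac{\aiminabs{\nu'(\theta)}}{\kappa(\theta)}\leq\tilde c_0$. Hence $\aiminabs{I_3}\leq C\aiminnorm{\hat\theta_t}_4\aiminnorm{\nabla\boldsymbol u}_4\aiminnorm{\nabla\boldsymbol u_t}$, and Ladyzhenskaya's inequality together with the uniform bounds \eqref{eq2.4.35} on $\aiminnorm{\hat\theta_t}$ and \eqref{eq2.3.18} on $\aiminnorm{\nabla\boldsymbol u}$ reduces this to $\aiminabs{I_3}\leq\tfrac{\underline\nu}{8}\aiminnorm{\nabla\boldsymbol u_t}^2+C\big(\aiminnorm{\nabla\hat\theta_t}^2+\aiminnorm{\Delta\boldsymbol u}^2\big)$.

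Collecting these and absorbing the three $\tfrac{\underline\nu}{8}\aiminnorm{\nabla\boldsymbol u_t}^2$ into the viscous term yields
\begin{equation*}
\tfrac{\mathrm d}{\mathrm dt}\aiminnorm{\boldsymbol u_t}^2+\underline\nu\aiminnorm{\nabla\boldsymbol u_t}^2\leq C\aiminnorm{\nabla\boldsymbol u}^2\aiminnorm{\boldsymbol u_t}^2+C\big(1+\aiminnorm{\hat\theta_t}^2+\aiminnorm{\nabla\hat\theta_t}^2+\aiminnorm{\Delta\boldsymbol u}^2\big).
\end{equation*}
With $y=\aiminnorm{\boldsymbol u_t}^2$, the coefficient $g=C\aiminnorm{\nabla\boldsymbol u}^2$ is uniformly bounded by \eqref{eq2.3.18}, while the forcing $h$ has controlled time average thanks to \eqref{eq2.3.15}, \eqref{eq2.4.36} (using $\underline\kappa\aiminnorm{\nabla\hat\theta_t}^2\leq\aiminnorm{\sqrt{\kappa(\theta)}\nabla\hat\theta_t}^2$) and \eqref{eq2.3.20}; finally $\int_t^{t+1}y\,{\mathrm{d}s}$ is controlled by \eqref{eq2.3.19}. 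As all these inputs hold from $t_0+4$ onward, the Uniform Gronwall Lemma~\ref{lem2.1.2} (with $\gamma=1$) delivers \eqref{eq2.4.6} for $t\geq t_0+5$, and a direct integration over $(t,t+1)$ then gives \eqref{eq2.4.5}. I expect the main obstacle to be $I_3$, precisely because $\nu'(\theta)\theta_t$ cannot be bounded directly; the key maneuver is to trade $\theta_t$ for $\hat\theta_t$ through \eqref{eq2.3.30.1} so that assumption \eqref{asp1.1.3} and the uniform $H^1$-control of $\boldsymbol u$ apply, leaving only quantities whose time averages were already secured.
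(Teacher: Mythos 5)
Your proposal is correct and follows essentially the same route as the paper: test the time-differentiated momentum equation against $\boldsymbol u_t$, exploit the free-slip structure to kill the pressure and transport terms, rewrite $\nu'(\theta)\theta_t=\frac{\nu'(\theta)}{\kappa(\theta)}\hat\theta_t$ so that assumption \eqref{asp1.1.3} applies to the divergence term, and close with the Uniform Gronwall Lemma using \eqref{eq2.3.15}, \eqref{eq2.3.18}--\eqref{eq2.3.20}, \eqref{eq2.4.35}--\eqref{eq2.4.36}. The only (immaterial) difference is that you invoke the pointwise bounds on $\aiminnorm{\hat\theta_t}$ and $\aiminnorm{\nabla\boldsymbol u}$ inside the term estimates, whereas the paper keeps the products $\aiminnorm{\hat\theta_t}^2\aiminnorm{\nabla\hat\theta_t}^2$ and $\aiminnorm{\nabla\boldsymbol u}^2\aiminnorm{\Delta\boldsymbol u}^2$ (via Lemma~\ref{lem2.1.6}) and uses those same bounds only at the Gronwall step.
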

\begin{proof}
Taking the inner product of  \eqref{eq2.4.1}$_1$ with $\boldsymbol u_t$ in $L^2(\Omega)$ and integrating by parts, we obtain
\begin{equation}\begin{split}\label{eq2.4.2}
	\frac12\frac{\mathrm{d}}{\mathrm{d}t}\aiminnorm{\boldsymbol u_t}^2 +\int_\Omega\nu(\theta)\aiminabs{\nabla\boldsymbol u_t}^2{\mathrm{d}x}{\mathrm{d}y}
	\leq \int_\Omega &\aiminabs{\nu'(\theta)\theta_t}\aiminabs{\nabla\boldsymbol u}\aiminabs{\nabla\boldsymbol u_t}{\mathrm{d}x}{\mathrm{d}y}
	+ \int_\Omega\aiminabs{\nabla\boldsymbol u}\aiminabs{\boldsymbol u_t}^2{\mathrm{d}x}{\mathrm{d}y} \\
	&+ \int_\Omega\aiminabs{\theta_t}\aiminabs{\boldsymbol u_t}{\mathrm{d}x}{\mathrm{d}y}=:I_1+I_2+I_3.
\end{split}\end{equation}
Using Young's inequality and Lemma~\ref{lem2.1.1}, we estimate each term in the right-hand side of \eqref{eq2.4.2} as follows.
With the assumption \eqref{asp1.1.3} and applying Lemma~\ref{lem2.1.6} with $f=\hat\theta_t$, $g=\nabla\boldsymbol u$, and $h=\nabla\boldsymbol u_t$, we estimate $I_1$ as
\begin{equation*}\begin{split}
	I_1
	&\leq \int_\Omega \frac{\aiminabs{\nu'(\theta)}}{\kappa(\theta)}\aiminabs{\hat\theta_t}\aiminabs{\nabla\boldsymbol u}\aiminabs{\nabla\boldsymbol u_t}{\mathrm{d}x}{\mathrm{d}y}
	\leq C\int_\Omega \aiminabs{\hat\theta_t}\aiminabs{\nabla\boldsymbol u}\aiminabs{\nabla\boldsymbol u_t}{\mathrm{d}x}{\mathrm{d}y}\\
	&\leq   C\aiminnorm{\hat\theta_t}^2\aiminnorm{\nabla\hat\theta_t}^2 +C\aiminnorm{\nabla\boldsymbol u}^2\aiminnorm{\Delta\boldsymbol u}^2 + \frac{\underline\nu}{8}\aiminnorm{\nabla\boldsymbol u_t}^2;
\end{split}\end{equation*}
using H\"older's inequality and Young's inequality, we estimate $I_2$ and $I_3$ as
\begin{equation*}
	\begin{split}
	I_2
		\leq \aiminnorm{\nabla\boldsymbol u}\aiminnorm{\boldsymbol u_t}_4^2
		\leq C\aiminnorm{\nabla\boldsymbol u}\aiminnorm{\boldsymbol u_t}\aiminnorm{\nabla\boldsymbol u_t}
		\leq C\aiminnorm{\nabla\boldsymbol u}^2\aiminnorm{\boldsymbol u_t}^2 + \frac{\underline\nu}{8}\aiminnorm{\nabla\boldsymbol u_t}^2,
	\end{split}
\end{equation*}
\[
I_3=\int_\Omega\aiminabs{\theta_t}\aiminabs{\boldsymbol u_t}{\mathrm{d}x}{\mathrm{d}y}
\leq \frac12\aiminnorm{\theta_t}^2 + \frac12\aiminnorm{\boldsymbol u_t}^2.
\]
Combining all these estimates for $I_1$, $I_2$ and $I_3$, it then follows from \eqref{eq2.4.2} that
\begin{equation}\label{eq2.4.4}
	\begin{split}
		\frac{\mathrm{d}}{\mathrm{d}t}\aiminnorm{\boldsymbol u_t}^2 + \underline\nu\aiminnorm{\nabla\boldsymbol u_t}^2
		\leq C\aiminnorm{\hat\theta_t}^2\aiminnorm{\nabla\theta_t}^2 +\aiminnorm{\theta_t}^2 + C\aiminnorm{\nabla\boldsymbol u}^2\aiminnorm{\Delta\boldsymbol u}^2\\
		 + C\big(1+\aiminnorm{\nabla\boldsymbol u}^2\big) \aiminnorm{\boldsymbol u_t}^2.
	\end{split}
\end{equation}
Applying then the Uniform Gronwall Lemma~\ref{lem2.1.2} with $y=\aiminnorm{\boldsymbol u_t}^2$  and employing Propositions~\ref{prop2.4.1}-\ref{prop2.5.1} and the uniform estimates \eqref{eq2.3.19}, we obtain the uniform estimate \eqref{eq2.4.6}. As before, integrating \eqref{eq2.4.4} in time on $(t,t+1)$ yields \eqref{eq2.4.5}. We thus completed the proof of Proposition~\ref{prop2.5.2}.
\end{proof}

We are now ready to derive the uniform estimate for $\theta$ in $H^2(\Omega)$.
\begin{proposition}\label{prop2.5.3}
	Under the assumptions of Theorem~\ref{thm1.3}, we have
\begin{equation}\label{eq2.4.9}
	\aiminnorm{\Delta\hat\theta(t)},\;\aiminnorm{\Delta\theta(t)}\leq C,\quad\quad \forall\,t\geq  t_0+5.
\end{equation}	
\end{proposition}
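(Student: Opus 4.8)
The plan is to read $\Delta\hat\theta$ directly off the evolution equation \eqref{eq2.3.2}$_2$ rather than to run yet another energy--Gronwall argument, since Proposition~\ref{prop2.5.1} now furnishes the missing pointwise-in-time bound $\aiminnorm{\hat\theta_t(t)}\leq C$. Concretely, I would rewrite \eqref{eq2.3.2}$_2$ as
\[
\kappa(\theta)\Delta\hat\theta = \hat\theta_t + \boldsymbol u\cdot\nabla\hat\theta - \kappa(\theta)u_2 + \kappa'(\theta)\hat\theta_y,
\]
and then divide by $\kappa(\theta)$, which is legitimate because of the lower bound \eqref{asp1.1.1}. This yields
\[
\Delta\hat\theta = \frac{\hat\theta_t}{\kappa(\theta)} + \frac{\boldsymbol u\cdot\nabla\hat\theta}{\kappa(\theta)} - u_2 + \frac{\kappa'(\theta)}{\kappa(\theta)}\hat\theta_y,
\]
and the goal becomes bounding the $L^2$-norm of the right-hand side uniformly in $t$.

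Taking $\aiminnorm{\cdot}$ and invoking $1/\kappa(\theta)\leq 1/\underline\kappa$ together with the crucial ratio bound $\aiminabs{\kappa'(\theta)}/\kappa(\theta)\leq\tilde c_0$ from \eqref{asp1.1.3}, the first, third and fourth terms are immediately controlled: $\aiminnorm{\hat\theta_t}\leq C$ by Proposition~\ref{prop2.5.1}, $\aiminnorm{u_2}\leq\aiminnorm{\boldsymbol u}\leq C$ by Proposition~\ref{prop2.3.1}, and $\tilde c_0\aiminnorm{\hat\theta_y}\leq\tilde c_0\aiminnorm{\nabla\hat\theta}\leq C$ by Proposition~\ref{prop2.4.1}. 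The only genuinely nonlinear term is the convective one. For it I would use H\"older's inequality and Ladyzhenskaya's inequality (Lemma~\ref{lem2.1.1}), namely $\aiminnorm{\boldsymbol u\cdot\nabla\hat\theta}\leq\aiminnorm{\boldsymbol u}_4\aiminnorm{\nabla\hat\theta}_4$, where $\aiminnorm{\boldsymbol u}_4\leq C$ since $\aiminnorm{\boldsymbol u},\aiminnorm{\nabla\boldsymbol u}\leq C$ (Propositions~\ref{prop2.3.1} and~\ref{prop2.4.2}), and $\aiminnorm{\nabla\hat\theta}_4^2\leq c_2\aiminnorm{\nabla\hat\theta}\aiminnorm{\nabla\hat\theta}_{H^1}\leq C(1+\aiminnorm{\Delta\hat\theta})$ using again $\aiminnorm{\nabla\hat\theta}\leq C$. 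Hence $\aiminnorm{\boldsymbol u\cdot\nabla\hat\theta}\leq C(1+\aiminnorm{\Delta\hat\theta}^{1/2})$.

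Collecting these estimates gives the closed inequality $\aiminnorm{\Delta\hat\theta}\leq C + C\aiminnorm{\Delta\hat\theta}^{1/2}$, and Young's inequality absorbs the half-power ($C\aiminnorm{\Delta\hat\theta}^{1/2}\leq\frac12\aiminnorm{\Delta\hat\theta}+C'$) to produce the uniform bound $\aiminnorm{\Delta\hat\theta(t)}\leq C$. The absorption of this convective contribution is the only subtle point; everything else is a direct consequence of the prior uniform estimates. (Note all prerequisites hold for $t\geq t_0+4$, which is consistent with the stated threshold $t_0+5$.) Finally, to transfer the bound from $\hat\theta$ to $\theta$ I would use the identity \eqref{eq2.3.7}, rewritten as $\kappa(\theta)\Delta\theta = \Delta\hat\theta - \kappa'(\theta)\aiminabs{\nabla\theta}^2$, and then estimate exactly as in Corollary~\ref{cor3.4.1}: using $\nabla\hat\theta=\kappa(\theta)\nabla\theta$ from \eqref{eq2.3.30.1} and the ratio bound \eqref{asp1.1.3}, one has $\aiminnorm{\kappa'(\theta)\aiminabs{\nabla\theta}^2} = \aiminnorm{\tfrac{\kappa'(\theta)}{\kappa(\theta)^2}\aiminabs{\nabla\hat\theta}^2}\leq\tfrac{\tilde c_0}{\underline\kappa}\aiminnorm{\nabla\hat\theta}_4^2\leq C\aiminnorm{\nabla\hat\theta}\aiminnorm{\Delta\hat\theta}\leq C$, so that $\underline\kappa\aiminnorm{\Delta\theta}\leq\aiminnorm{\kappa(\theta)\Delta\theta}\leq\aiminnorm{\Delta\hat\theta}+C\leq C$, which is the desired estimate \eqref{eq2.4.9}.
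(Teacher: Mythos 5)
Your proposal is correct and follows essentially the same route as the paper: the paper likewise reads $\aiminnorm{\Delta\hat\theta}$ off the elliptic form of \eqref{eq2.3.2}$_2$ (bounding $\underline\kappa\aiminnorm{\Delta\hat\theta}\leq\aiminnorm{\hat\theta_t}+\aiminnorm{\boldsymbol u\cdot\nabla\hat\theta}+\aiminnorm{\kappa(\theta)u_2}+\aiminnorm{\kappa'(\theta)\hat\theta_y}$), absorbs the half-power of $\aiminnorm{\Delta\hat\theta}$ arising from the nonlinear terms via Young's inequality, and transfers the bound to $\theta$ through the identity \eqref{eq2.3.7} exactly as in \eqref{eq2.3.7.1}. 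Your minor variants (dividing by $\kappa(\theta)$ first, and invoking the ratio bound \eqref{asp1.1.3} in place of Lemma~\ref{lem2.4.1} for the $\kappa'(\theta)\hat\theta_y$ term) are cosmetic and equally valid.
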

\begin{proof}
	
 We infer from equation \eqref{eq2.3.2}$_2$ that
\begin{equation}\label{eq2.4.7}
	\underline\kappa\aiminnorm{\Delta\hat\theta}\leq \aiminnorm{\hat\theta_t} + \aiminnorm{\boldsymbol u\cdot\nabla\hat\theta} + \aiminnorm{\kappa(\theta)u_2} + \aiminnorm{\kappa'(\theta)\hat\theta_y}.
\end{equation}
The last three terms in the right-hand side of \eqref{eq2.4.7} can be estimated as follows:
\begin{equation*}
	\aiminnorm{\boldsymbol u\cdot\nabla\hat\theta}\leq\aiminnorm{\boldsymbol u}_4\aiminnorm{\nabla\hat\theta}_4
	\leq C\aiminnorm{\nabla\boldsymbol u}\aiminnorm{\nabla\hat\theta}^{1/2}\aiminnorm{\Delta\hat\theta}^{1/2}
	\leq C\aiminnorm{\nabla\boldsymbol u}^2\aiminnorm{\nabla\hat\theta} + \frac{\underline\kappa}{8}\aiminnorm{\Delta\hat\theta};
\end{equation*}
\begin{equation*}
	\aiminnorm{\kappa(\theta)u_2}\leq \aiminnorm{\kappa(\theta)}_4\aiminnorm{ u_2}_4\leq \aiminnorm{\kappa(\theta)}_4\aiminnorm{\nabla\boldsymbol u};
\end{equation*}
\begin{equation*}
	 \aiminnorm{\kappa'(\theta)\hat\theta_y}\leq \aiminnorm{\kappa'(\theta)}_4\aiminnorm{\hat\theta_y}_4
	 \leq C \aiminnorm{\kappa'(\theta)}_4^2\aiminnorm{\nabla\hat\theta} + \frac{\underline\kappa}{8}\aiminnorm{\Delta\hat\theta}.
\end{equation*}
By inserting these estimates into \eqref{eq2.4.7} and using the uniform estimates \eqref{eq2.4.35},\eqref{eq2.3.8},\eqref{eq2.3.18} and Lemma~\ref{lem2.4.1}, we conclude that
 \eqref{eq2.4.9} is valid for $\Delta\hat\theta$. We then infer from the inequality \eqref{eq2.3.7.1} that \eqref{eq2.4.9} is also valid for $\Delta\theta$, which gives us the desired uniform estimates of $\aiminnorm{\theta}_{H^2}$ and the absorbing ball of $\theta$ in $H^2(\Omega)$.
\end{proof}

As a consequence of Proposition~\ref{prop2.5.3} and the Sobolev embedding $H^2(\Omega)\hookrightarrow L^\infty(\Omega)$, we obtain that
\begin{equation}\label{eq2.4.41}
	\aiminnorm{\theta(t)}_\infty,\;\aiminnorm{\hat\theta(t)}_\infty\leq C,\quad\forall\,t\geq  t_0+5,
\end{equation}
which implies that
\begin{equation}\begin{split}\label{eq2.4.42}
	&\underline\nu \leq \nu(\theta) \leq \bar\nu,\quad\quad \underline\kappa \leq \kappa(\theta) \leq \bar\kappa,\quad\forall\,t\geq  t_0+5,\\
	&\aiminabs{\nu'(\theta)},\;\aiminabs{\kappa'(\theta)} \leq C,\hspace{72pt}\forall\,t\geq  t_0+5,\\
\end{split}\end{equation}
for some constants $\bar\nu,\bar\kappa>0$, independent of the time $t$ and initial data $\boldsymbol u_0$ and $\theta_0$.

We are now going  to utilize the regularity results for Stokes system (see e.g. \cite[Chapter I]{Tem01}) to obtain a uniform estimate of $\aiminnorm{\boldsymbol u}_{H^2}$.
\begin{proposition}\label{prop2.5.4}
	Under the assumptions of Theorem~\ref{thm1.3}, we have
\begin{equation*}
	\aiminnorm{\boldsymbol u(t)}_{H^2} \leq C,\quad\forall\,t\geq  t_0+5.
\end{equation*}	
\end{proposition}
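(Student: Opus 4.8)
The plan is to freeze the time variable and to read the velocity equation \eqref{eq1.1.3}$_1$ as a stationary (generalized) Stokes system for the pair $(\boldsymbol u(t),p(t))$, namely
\[
-{\mathrm{div}\,}(\nu(\theta)\nabla\boldsymbol u) + \nabla p = \boldsymbol f,\qquad {\mathrm{div}\,}\boldsymbol u = 0,
\]
supplemented with the free-slip boundary conditions, where all the remaining terms are collected into the forcing
\[
\boldsymbol f := -\boldsymbol u_t - \boldsymbol u\cdot\nabla\boldsymbol u + \theta\boldsymbol e_2 + (1-y)\boldsymbol e_2.
\]
At each fixed $t\geq t_0+5$ I would invoke the $H^2$-regularity theory for this Stokes problem (as cited) to obtain an estimate of the form $\aiminnorm{\boldsymbol u}_{H^2}\leq C(\aiminnorm{\boldsymbol f} + \aiminnorm{\boldsymbol u})$, the constant $C$ being time-uniform since the ellipticity and the regularity of the coefficient $\nu(\theta)$ are controlled uniformly in $t$ by \eqref{eq2.4.42}. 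The pressure causes no difficulty here, exactly as already recorded in the free-slip identity \eqref{eq1.1.10}.

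The second step is to bound $\boldsymbol f$ in $L^2$ using the uniform estimates already in hand. The term $\aiminnorm{\boldsymbol u_t}$ is bounded by \eqref{eq2.4.6}, $\aiminnorm{\theta}$ is bounded (e.g. by \eqref{eq2.2.8} or \eqref{eq2.4.9}), and $(1-y)\boldsymbol e_2$ contributes a fixed constant. The only genuinely critical term is the convection $\boldsymbol u\cdot\nabla\boldsymbol u$, which I would estimate with H\"older's and Ladyzhenskaya's inequalities (Lemma~\ref{lem2.1.1}) together with the uniform $H^1$-bound \eqref{eq2.3.18}:
\[
\aiminnorm{\boldsymbol u\cdot\nabla\boldsymbol u} \leq \aiminnorm{\boldsymbol u}_4\aiminnorm{\nabla\boldsymbol u}_4 \leq C\aiminnorm{\nabla\boldsymbol u}^{1/2}\aiminnorm{\boldsymbol u}_{H^2}^{1/2} \leq C\aiminnorm{\boldsymbol u}_{H^2}^{1/2}.
\]
Altogether this yields $\aiminnorm{\boldsymbol f}\leq C + C\aiminnorm{\boldsymbol u}_{H^2}^{1/2}$. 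Inserting this into the Stokes estimate gives $\aiminnorm{\boldsymbol u}_{H^2}\leq C + C\aiminnorm{\boldsymbol u}_{H^2}^{1/2}$, and Young's inequality then closes the estimate to produce the desired time-uniform bound $\aiminnorm{\boldsymbol u}_{H^2}\leq C$. It is worth stressing that the half power of $\aiminnorm{\boldsymbol u}_{H^2}$ coming from the convection is precisely what makes the absorption legitimate; this is the one place where Proposition~\ref{prop2.5.2} (the pointwise bound on $\boldsymbol u_t$) is indispensable, since it is what allows $\boldsymbol f$ to be estimated pointwise in $t$ rather than only in time average.

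I expect the main obstacle to be the \emph{variable viscosity}: the constant-coefficient Stokes regularity does not literally apply to the operator ${\mathrm{div}\,}(\nu(\theta)\nabla\cdot)$, and a naive splitting $-\nu(\theta)\Delta\boldsymbol u = -\underline\nu\Delta\boldsymbol u - (\nu(\theta)-\underline\nu)\Delta\boldsymbol u$ fails, because it produces an oscillation term $(\nu(\theta)-\underline\nu)\Delta\boldsymbol u$ whose coefficient is only $O(\bar\nu-\underline\nu)$ and thus carries no smallness to be absorbed against $\aiminnorm{\boldsymbol u}_{H^2}$. To overcome this I would reduce to the constant-coefficient result by a freezing-of-coefficients argument, exploiting that $\theta\in H^2(\Omega)\hookrightarrow C(\overline\Omega)$ by Proposition~\ref{prop2.5.3} and Agmon's inequality, so that $\nu(\theta)$ is uniformly continuous and pinched between $\underline\nu$ and $\bar\nu$; on sufficiently small patches the oscillation of $\nu(\theta)$ is as small as desired, the local error term then \emph{does} carry a small coefficient and can be absorbed, and summing the patch estimates recovers a global $H^2$ bound whose constant remains time-uniform thanks to \eqref{eq2.4.42}. (If one instead keeps the divergence-form operator and applies a generalized Stokes estimate directly, the commutator $\nu'(\theta)\nabla\theta\cdot\nabla\boldsymbol u$ appears, and it is harmless: by $\aiminnorm{\nu'(\theta)}_\infty\leq C$ and $\aiminnorm{\nabla\theta}_4\leq C\aiminnorm{\theta}_{H^2}\leq C$ it is again controlled by $C\aiminnorm{\boldsymbol u}_{H^2}^{1/2}$.) This reduction, rather than the routine term-by-term bookkeeping, is the crux of the argument.
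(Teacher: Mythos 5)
Your skeleton --- freeze time, read \eqref{eq1.1.3}$_1$ as a stationary Stokes-type system, bound the forcing in $L^2$ via Propositions~\ref{prop2.5.2}--\ref{prop2.5.3} and the $H^1$ bounds, then absorb the $\aiminnorm{\boldsymbol u}_{H^2}^{1/2}$ power from the convection by Young's inequality --- agrees with the paper and is sound. The genuine gap is in the device you make the crux of the proof, the freezing-of-coefficients/patching argument, and specifically in your claim that ``the pressure causes no difficulty here, exactly as already recorded in \eqref{eq1.1.10}.'' That is a category error: \eqref{eq1.1.10} kills $\int_\Omega\nabla p\cdot\Delta\boldsymbol u\,{\mathrm{d}x}{\mathrm{d}y}$ in a \emph{testing} (energy) argument, but it does not remove the pressure from an \emph{elliptic regularity} estimate, where $p$ is part of the unknown being estimated. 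Concretely, localizing the Stokes system by a cutoff $\varphi$ produces, besides the small frozen-coefficient error you discuss, commutator terms containing $p$ itself (from $\varphi\nabla p=\nabla(\varphi p)-p\nabla\varphi$) and destroys solenoidality (${\mathrm{div}\,}(\varphi\boldsymbol u)=\nabla\varphi\cdot\boldsymbol u\neq 0$), so every patch estimate carries $\aiminnorm{p}_{L^2}$ on its right-hand side. An a priori pressure bound is therefore unavoidable on your route, and your proposal never produces one.

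This is exactly the point the paper spends the first half of its proof on: it first establishes $\aiminnorm{p}_{L^2(\Omega)/\mathbb R}\leq C$ in \eqref{eq2.4.43}--\eqref{eq2.4.45}, via Lax--Milgram and an $H^{-1}$ estimate of the forcing (writing the convection in divergence form $(u_1\boldsymbol u)_x+(u_2\boldsymbol u)_y$ so that it costs only $\aiminnorm{\boldsymbol u}\aiminnorm{\nabla\boldsymbol u}$). Then, instead of localization, it uses the global change of pressure $\tilde p=p/\nu(\theta)$, which turns \eqref{eq1.1.3}$_1$ into the genuine constant-coefficient Stokes system \eqref{eq2.4.46}; the price is lower-order terms, one of which, $\frac{\nu'(\theta)}{\nu(\theta)}\tilde p\nabla\theta$, involves the pressure and is estimated precisely with \eqref{eq2.4.45} and Lemma~\ref{lem2.1.4}, absorbing $\tfrac14\aiminnorm{\tilde p}_{H^1}^2$ into the left-hand side. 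Note finally that your parenthetical alternative can be upgraded into a complete proof that bypasses both Stokes regularity and the pressure: since $\boldsymbol u(t)\in\mathcal D(A_1)$ and $p(t)\in H^1(\Omega)$, identity \eqref{eq1.1.10} \emph{does} apply to the testing argument, so taking the $L^2$ inner product of \eqref{eq1.1.3}$_1$ with $-\Delta\boldsymbol u$ at fixed $t\geq t_0+5$ gives
\begin{equation*}
\underline\nu\aiminnorm{\Delta\boldsymbol u}^2\leq \big(\aiminnorm{\boldsymbol u_t}+\aiminnorm{\boldsymbol u\cdot\nabla\boldsymbol u}+\aiminnorm{\nu'(\theta)\nabla\theta\cdot\nabla\boldsymbol u}+\aiminnorm{\theta}+C\big)\aiminnorm{\Delta\boldsymbol u},
\end{equation*}
and your own estimates $\aiminnorm{\boldsymbol u\cdot\nabla\boldsymbol u},\,\aiminnorm{\nu'(\theta)\nabla\theta\cdot\nabla\boldsymbol u}\leq C\aiminnorm{\Delta\boldsymbol u}^{1/2}$ (using \eqref{eq2.4.42} and Proposition~\ref{prop2.5.3}), together with Young's inequality and the equivalence $\aiminnorm{\Delta\boldsymbol u}\backsimeq\aiminnorm{\boldsymbol u}_{H^2}$, close the bound. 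In short, the repair is to move the free-slip identity to the one place where it actually applies.
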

\begin{proof}	
We first need to establish some uniform estimates on the pressure term $p$ and for that reason, we rewrite \eqref{eq1.1.3}$_{1,2}$ as
\begin{equation}\begin{cases}\label{eq2.4.43}
	 - {\mathrm{div}\,}(\nu(\theta) \nabla \boldsymbol u)   + \nabla p = \boldsymbol f:=\boldsymbol u_t -\boldsymbol u\cdot \nabla \boldsymbol u+ \theta \boldsymbol e_2 + (1-y)\boldsymbol e_2,\quad \boldsymbol e_2=(0,1),\\
	{\mathrm{div}\,}\boldsymbol u=0.\\	
\end{cases}\end{equation}
For $\boldsymbol f\in H^{-1}(\Omega)$ and $\theta$ and $\nu(\theta)$ satisfying \eqref{eq2.4.41} and \eqref{eq2.4.42}$_1$, the Lax-Milgram theorem and the Helmholtz decomposition ensure that there exists a unique solution $(\boldsymbol u,p)\in H_0^1(\Omega)\times L^2(\Omega)/\mathbb R$ to the system \eqref{eq2.4.43} such that
\begin{equation}\label{eq2.4.44.1}
\aiminnorm{\boldsymbol u}_{H^1} + \aiminnorm{p}_{L^2(\Omega)/\mathbb R} \leq C(\underline\nu,\bar\nu,\Omega)\aiminnorm{\boldsymbol f}_{H^{-1}},	
\end{equation}
where the space $L^2(\Omega)/\mathbb R$ is isomorphic to the subspace of $L^2(\Omega)$ orthogonal to the constants:
\[
L^2(\Omega)/\mathbb R = \aiminset{p\in L^2(\Omega),\; \int_\Omega p(x,y){\mathrm{d}x}{\mathrm{d}y} =0}.
\]
We note that $\boldsymbol u\cdot\nabla\boldsymbol u=(u_1\boldsymbol u)_x+(u_2\boldsymbol u)_y$ since $\boldsymbol u$ is divergence-free (see \eqref{eq2.4.43}$_2$). Therefore, for the $\bf f$ in \eqref{eq2.4.43}, we have
\begin{equation}\begin{split}\label{eq2.4.44.2}
	\aiminnorm{\boldsymbol f}_{H^{-1}}
	&\leq \aiminnorm{\boldsymbol u_t}_{H^{-1} } +\aiminnorm{ (u_1\boldsymbol u)_x+(u_2\boldsymbol u)_y}_{H^{-1}}+ \aiminnorm{\theta }_{H^{-1}} + \aiminnorm{ (1-y) }_{H^{-1} }\\
	&\leq \aiminnorm{\boldsymbol u_t} + \aiminnorm{ \aiminabs{\boldsymbol u}^2 } + \aiminnorm{\theta } + \aiminnorm{ (1-y) }
	\leq \aiminnorm{\boldsymbol u_t} + \aiminnorm{ \boldsymbol u }\aiminnorm{\nabla\boldsymbol u} + \aiminnorm{\theta } + 1.
\end{split}\end{equation}
With the uniform estimates \eqref{eq2.2.8}, \eqref{eq2.3.18}, \eqref{eq2.4.6}, we infer from \eqref{eq2.4.44.1}-\eqref{eq2.4.44.2} that
\begin{equation}\label{eq2.4.45}
	\aiminnorm{p}_{L^2(\Omega)/\mathbb R} \leq C,\quad\forall\,t\geq  t_0+5.
\end{equation}

In order to use the regularity results for the Stokes system,  we rewrite \eqref{eq1.1.3}$_{1,2}$ as
\begin{equation}\begin{split}\label{eq2.4.46}
	-\Delta\boldsymbol u + \nabla\tilde p &= \boldsymbol f:=
	-\frac{\nu'(\theta) }{\nu(\theta)}\tilde p\nabla\theta + \frac{\nu'(\theta) }{\nu(\theta)}\nabla\theta\nabla\boldsymbol u -\frac{1}{\nu(\theta)}\boldsymbol u\cdot\nabla\boldsymbol u \\
	&\hspace{40pt}-\frac{1}{\nu(\theta)}\boldsymbol u_t + \frac{1}{\nu(\theta)}\theta \boldsymbol e_2 + \frac{1}{\nu(\theta)}(1-y)\boldsymbol e_2,\\
	{\mathrm{div}\,}\boldsymbol u&=0,\\
\end{split}\end{equation}
where
\[
\tilde p=\frac{p}{\nu(\theta)}.
\]
Applying \cite[Chapter I, Proposition 2.2]{Tem01} to \eqref{eq2.4.46}, we obtain that
\begin{equation}\label{eq2.4.47.1}
	\aiminnorm{\boldsymbol u}_{H^2}^2 + \aiminnorm{  \tilde p }_{H^1}^2
	\leq \aiminnorm{\boldsymbol f}^2.
\end{equation}
We remark that the results in \cite[Chapter I, Proposition 2.2]{Tem01} concern the case of Dirichlet boundary conditions in a smooth domain, but can be also applied to a periodic channel domain with free-slip boundary conditions and the proofs are easier.

To estimate $\aiminnorm{\boldsymbol f}$, we estimate each term in the right-hand side of \eqref{eq2.4.46}$_1$ as follows.

\noindent By \eqref{eq2.4.42} and Lemma~\ref{lem2.1.4} with $f=\nabla\theta$ and $g=\tilde p$, we have
\begin{equation}\begin{split}\label{eq2.4.47.2}
	\aiminnorm{ \frac{\nu'(\theta) }{\nu(\theta)}\tilde p\nabla\theta  }^2
	&\leq C \aiminnorm{\tilde p\nabla\theta }^2
	\leq C\aiminnorm{\tilde p}^2\aiminnorm{\nabla\theta}^2\aiminnorm{\nabla\theta}_{H^1}^2 + \frac14 \aiminnorm{\tilde p }_{H^1}^2\\
	&\leq C\aiminnorm{p}^2\aiminnorm{\Delta\theta}^4 +  \frac14 \aiminnorm{\tilde p}_{H^1}^2.
\end{split}\end{equation}
By \eqref{eq2.4.42} and Lemma~\ref{lem2.1.4} with $f=\nabla\theta$ and $g=\nabla\boldsymbol u$, we have
\begin{equation}\begin{split}
	\aiminnorm{ \frac{\nu'(\theta) }{\nu(\theta)}\nabla\theta\nabla\boldsymbol u }^2
	\leq C\aiminnorm{ \nabla\theta\nabla\boldsymbol u }^2
	\leq C\aiminnorm{\nabla\theta}^2\aiminnorm{\nabla\theta}_{H^1}^2\aiminnorm{\nabla\boldsymbol u}^2 + \frac14\aiminnorm{\nabla\boldsymbol u}_{H^1}^2.
\end{split}\end{equation}
By \eqref{eq2.4.42} and Lemma~\ref{lem2.1.4} with $f=\boldsymbol u$ and $g=\nabla\boldsymbol u$, we have
\begin{equation}\begin{split}
	\aiminnorm{ \frac{1}{\nu(\theta)}\boldsymbol u\cdot\nabla\boldsymbol u }^2
	\leq C\aiminnorm{\boldsymbol u\cdot\nabla\boldsymbol u}
	\leq C\aiminnorm{\boldsymbol u}^2\aiminnorm{\nabla\boldsymbol u}^2\aiminnorm{\nabla\boldsymbol u}^2 + + \frac14\aiminnorm{\nabla\boldsymbol u}_{H^1}^2.
\end{split}\end{equation}
By \eqref{eq2.4.42}, we have
\begin{equation}\label{eq2.4.47.5}
	\aiminnorm{ \frac{1}{\nu(\theta)}\boldsymbol u_t }^2 +\aiminnorm{ \frac{1}{\nu(\theta)}\theta \boldsymbol e_2}^2 +\aiminnorm{ \frac{1}{\nu(\theta)}(1-y)\boldsymbol e_2 }^2\leq C(\aiminnorm{\boldsymbol u_t}^2 + \aiminnorm{\theta}^2 +1).
\end{equation}

Combining all the estimates \eqref{eq2.4.47.2}-\eqref{eq2.4.47.5}, we infer from \eqref{eq2.4.47.1} that
\begin{equation}\label{eq2.4.11p}
	\aiminnorm{\boldsymbol u}_{H^2}^2 + \aiminnorm{  \tilde p}_{H^1}^2
	\leq C (1+\aiminnorm{\boldsymbol u_t}^2 + \aiminnorm{\Delta\theta}^4\aiminnorm{p}^2 + \aiminnorm{\boldsymbol u}^2\aiminnorm{\nabla\boldsymbol u}^4 + \aiminnorm{\nabla\theta}^2\aiminnorm{\nabla\theta}_{H^1}^2\aiminnorm{\nabla\boldsymbol u}^2).
\end{equation}
Now, since all the terms in the right-hand side of \eqref{eq2.4.11p} are uniformly bounded by Propositions~\ref{prop2.4.2},\;\ref{prop2.5.2}-\ref{prop2.5.3} and estimate \eqref{eq2.4.45}, we then conclude that
\begin{equation}\label{eq2.4.11}
	\aiminnorm{\boldsymbol u(t)}_{H^2},\;\aiminnorm{  \tilde p }_{H^1} \leq C,\quad\forall\,t\geq  t_0+5.
\end{equation}
This also gives us the absorbing ball of $\boldsymbol u$ in $H^2(\Omega)$. We thus completed the proof of Proposition~\ref{prop2.5.3}.
\end{proof}

We now present an auxiliary lemma which is useful for obtaining the control of the time average of $\aiminnorm{(\boldsymbol u,\hat\theta)}_{H^3}^2$.
\begin{lemma}\label{lem2.5.1}
Let $\chi(\tau)$ be a $\mathcal C^1$-function in $\mathbb R$, $g\in H^1(\Omega)$, and $\theta$ satisfies \eqref{eq2.4.41}. Then
\begin{equation}\label{lem2.5.1eq1}
	\aiminnorm{\chi(\theta)g}_{H^1}^2 \leq C(\aiminnorm{\Delta\theta}^2 + 1)\aiminnorm{g}_{H^1}^2.
\end{equation}
\end{lemma}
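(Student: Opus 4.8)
The plan is to expand the $H^1$-norm by the Leibniz rule and reduce everything to a single $L^4$-bound for $\nabla\theta$ that is linear in $\aiminnorm{\Delta\theta}$. First I would write
\[
\aiminnorm{\chi(\theta)g}_{H^1}^2 = \aiminnorm{\chi(\theta)g}^2 + \aiminnorm{\chi'(\theta)(\nabla\theta)g + \chi(\theta)\nabla g}^2 \le \aiminnorm{\chi(\theta)g}^2 + 2\aiminnorm{\chi'(\theta)(\nabla\theta)g}^2 + 2\aiminnorm{\chi(\theta)\nabla g}^2,
\]
and record that, since $\chi\in\mathcal C^1(\mathbb R)$ and $\theta$ obeys \eqref{eq2.4.41}, both $\chi(\theta)$ and $\chi'(\theta)$ are bounded in $L^\infty(\Omega)$ by a constant $M$ depending only on $\chi$ and on the $L^\infty$-bound in \eqref{eq2.4.41}; this $M$ may be absorbed into $C$.

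The two ``diagonal'' terms are immediate: $\aiminnorm{\chi(\theta)g}^2\le M^2\aiminnorm{g}^2$ and $\aiminnorm{\chi(\theta)\nabla g}^2\le M^2\aiminnorm{\nabla g}^2$, so together they are bounded by $C\aiminnorm{g}_{H^1}^2\le C(\aiminnorm{\Delta\theta}^2+1)\aiminnorm{g}_{H^1}^2$. It then remains to treat the cross term $\aiminnorm{\chi'(\theta)(\nabla\theta)g}^2\le M^2\aiminnorm{(\nabla\theta)g}^2$. By H\"older's inequality and the Sobolev embedding of Lemma~\ref{lem2.1.1},
\[
\aiminnorm{(\nabla\theta)g}^2 \le \aiminnorm{\nabla\theta}_4^2\,\aiminnorm{g}_4^2 \le c_1^2(4)\,\aiminnorm{\nabla\theta}_4^2\,\aiminnorm{g}_{H^1}^2,
\]
so the whole lemma reduces to proving the gradient bound $\aiminnorm{\nabla\theta}_4^2\le C\aiminnorm{\Delta\theta}$.

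This last estimate is the key step, and the one I expect to be the main obstacle, since it must be extracted from the $L^\infty$-hypothesis \eqref{eq2.4.41} alone. Because $\theta$ vanishes on $\{y=0,1\}$ and is periodic in $x$, I would integrate by parts in $\aiminnorm{\nabla\theta}_4^4=\int_\Omega \aiminabs{\nabla\theta}^2\,\nabla\theta\cdot\nabla\theta\,{\mathrm{d}x}{\mathrm{d}y}$, transferring one derivative off the last factor; all boundary contributions drop thanks to $\theta|_{y=0,1}=0$ and to $x$-periodicity. Estimating the resulting terms pointwise by $\aiminabs{\nabla^2\theta}\,\aiminabs{\nabla\theta}^2$ and applying the Cauchy--Schwarz inequality gives
\[
\aiminnorm{\nabla\theta}_4^4 \le C\aiminnorm{\theta}_\infty \int_\Omega \aiminabs{\nabla^2\theta}\,\aiminabs{\nabla\theta}^2\,{\mathrm{d}x}{\mathrm{d}y} \le C\aiminnorm{\theta}_\infty\,\aiminnorm{\nabla^2\theta}\,\aiminnorm{\nabla\theta}_4^2.
\]
Dividing by $\aiminnorm{\nabla\theta}_4^2$ and invoking both \eqref{eq2.4.41} and the equivalence $\aiminnorm{\nabla^2\theta}\le\aiminnorm{\theta}_{H^2}\le C\aiminnorm{\Delta\theta}$ (valid for $\theta\in H_0^1(\Omega)\cap H^2(\Omega)$, see the preliminaries) yields $\aiminnorm{\nabla\theta}_4^2\le C\aiminnorm{\Delta\theta}$. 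Finally, combining the three contributions and using $\aiminnorm{\Delta\theta}\le\frac12(1+\aiminnorm{\Delta\theta}^2)$ delivers $\aiminnorm{\chi(\theta)g}_{H^1}^2\le C(\aiminnorm{\Delta\theta}^2+1)\aiminnorm{g}_{H^1}^2$.

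A cleaner but less self-contained alternative that sidesteps the integration by parts would be to invoke Ladyzhenskaya's inequality $\aiminnorm{\nabla\theta}_4^2\le c_2\aiminnorm{\nabla\theta}\,\aiminnorm{\nabla\theta}_{H^1}$ together with the uniform bound $\aiminnorm{\nabla\theta}\le C$ from Proposition~\ref{prop2.4.1} and $\aiminnorm{\nabla\theta}_{H^1}\le C(1+\aiminnorm{\Delta\theta})$; this again reduces the cross term to a factor linear in $\aiminnorm{\Delta\theta}$ and leads to the same conclusion.
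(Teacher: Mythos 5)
Your proposal is correct, and its skeleton (expand the $H^1$-norm, absorb $\chi(\theta)$ and $\chi'(\theta)$ into constants via \eqref{eq2.4.41}, treat the cross term by H\"older and the embedding $H^1\hookrightarrow L^4$ applied to $g$) coincides with the paper's one-line proof. The divergence is in the single remaining step: the paper bounds $\aiminnorm{\nabla\theta}_4^2$ simply by the Sobolev embedding of Lemma~\ref{lem2.1.1}, $\aiminnorm{\nabla\theta}_4^2\leq C\aiminnorm{\nabla\theta}_{H^1}^2\leq C\aiminnorm{\Delta\theta}^2$ (using the norm equivalence for $H_0^1\cap H^2$ functions), which is already of the quadratic form the right-hand side of \eqref{lem2.5.1eq1} allows. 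You instead insist on a bound \emph{linear} in $\aiminnorm{\Delta\theta}$ and prove the sharper interpolation inequality $\aiminnorm{\nabla\theta}_4^2\leq C\aiminnorm{\theta}_\infty\aiminnorm{\Delta\theta}$ by integration by parts; that argument is valid (the boundary terms do vanish by the Dirichlet and periodicity conditions, and Cauchy--Schwarz closes it), but it is more work than the lemma requires --- the step you flagged as ``the main obstacle'' is a triviality once you notice that quadratic dependence on $\aiminnorm{\Delta\theta}$ is acceptable. Note also that both your route and the paper's use, beyond \eqref{eq2.4.41}, the fact that $\theta$ satisfies the homogeneous Dirichlet condition (for the vanishing of boundary terms, respectively for $\aiminnorm{\nabla^2\theta}\leq C\aiminnorm{\Delta\theta}$); this is part of the standing context for $\theta$ rather than the stated hypothesis. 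Your second alternative, via Ladyzhenskaya's inequality together with $\aiminnorm{\nabla\theta}\leq C$ from Proposition~\ref{prop2.4.1}, also works where the lemma is applied, but it imports an a priori bound on the solution that is not among the lemma's hypotheses, so the first route (or the paper's) is the cleaner one.
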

\begin{proof}
By \eqref{eq2.4.42}$_1$ and the Sobolev embedding, we find that
	\begin{equation*}\begin{split}
		\aiminnorm{\chi(\theta)g}_{H^1}^2
		&\leq \aiminnorm{\chi(\theta)g}^2 + \aiminnorm{\chi(\theta)\nabla g}^2+ \aiminnorm{\chi'(\theta)(\nabla\theta) g}^2 \\
		&\leq C\aiminnorm{g}^2 + C\aiminnorm{\nabla g}^2 + C\aiminnorm{\nabla\theta}_4^2\aiminnorm{g}_4^2
		\leq C(\aiminnorm{\Delta\theta}^2 + 1)\aiminnorm{g}_{H^1}^2.
	\end{split}\end{equation*}
	This proves \eqref{lem2.5.1eq1}.
\end{proof}

We now utilize the classical elliptic regularity theory to obtain the control of the time average of $\aiminnorm{\hat\theta}_{H^3}^2$.
\begin{proposition}\label{prop2.5.5}
	Under the assumptions of Theorem~\ref{thm1.3}, we have
	\begin{equation}\label{eq2.4.16p}
		\int_t^{t+1}\aiminnorm{\hat\theta(s)}_{H^3}^2{\mathrm{d}s},\;\int_t^{t+1}\aiminnorm{\theta(s)}_{H^3}^2{\mathrm{d}s}\leq C,\quad\forall\,t\geq  t_0+5.
	\end{equation}
\end{proposition}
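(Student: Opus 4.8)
The plan is to combine the classical elliptic regularity theory with the product estimates of Lemmas~\ref{lem2.1.5} and~\ref{lem2.5.1}, reducing the $H^3$-control of $\hat\theta$ to the $H^1$-control of $\Delta\hat\theta$, and then transferring the bound to $\theta$ through the identity~\eqref{eq2.3.7}. Since $\hat\theta$ (respectively $\theta$) vanishes on $\partial\Omega$ and is periodic in $x$, the elliptic regularity for the Dirichlet Laplacian on the smooth channel $\Omega$ yields
\[
\aiminnorm{\hat\theta}_{H^3}\leq C\aiminnorm{\Delta\hat\theta}_{H^1},\qquad \aiminnorm{\theta}_{H^3}\leq C\aiminnorm{\Delta\theta}_{H^1},
\]
so it suffices to control the time averages of $\aiminnorm{\Delta\hat\theta}_{H^1}^2$ and $\aiminnorm{\Delta\theta}_{H^1}^2$.

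For $\hat\theta$, I would start from equation~\eqref{eq2.3.2}$_2$, rewritten as
\[
\Delta\hat\theta=\frac{1}{\kappa(\theta)}\hat\theta_t+\frac{1}{\kappa(\theta)}\boldsymbol u\cdot\nabla\hat\theta-u_2+\frac{\kappa'(\theta)}{\kappa(\theta)}\hat\theta_y,
\]
and estimate the $H^1$-norm of each term. The first and last are handled directly by Lemma~\ref{lem2.5.1} (with $\chi=1/\kappa$, $g=\hat\theta_t$, and with $\chi=\kappa'/\kappa$, $g=\hat\theta_y$), using that $\aiminnorm{\Delta\theta}$ is uniformly bounded by Proposition~\ref{prop2.5.3}; the first produces $C(\aiminnorm{\Delta\theta}^2+1)\aiminnorm{\hat\theta_t}_{H^1}^2$, while the last is uniformly bounded since $\aiminnorm{\hat\theta_y}_{H^1}\leq\aiminnorm{\hat\theta}_{H^2}\leq C\aiminnorm{\Delta\hat\theta}\leq C$. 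The term $\aiminnorm{u_2}_{H^1}$ is controlled by Proposition~\ref{prop2.4.2}. The convective term is the crux: applying Lemma~\ref{lem2.5.1} with $\chi=1/\kappa$ and $g=\boldsymbol u\cdot\nabla\hat\theta$, then Lemma~\ref{lem2.1.5} with $f=\boldsymbol u$ and $g=\nabla\hat\theta$, renders all factors $\aiminnorm{\boldsymbol u}_{H^1},\aiminnorm{\boldsymbol u}_{H^2},\aiminnorm{\hat\theta}_{H^2}$ uniformly bounded (Propositions~\ref{prop2.4.2},~\ref{prop2.5.3},~\ref{prop2.5.4}) and leaves a single top-order term $\epsilon\aiminnorm{\nabla\hat\theta}_{H^2}^2=\epsilon\aiminnorm{\hat\theta}_{H^3}^2$. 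After invoking the elliptic bound, this top-order term is absorbed into the left-hand side for $\epsilon$ small, giving
\[
\aiminnorm{\hat\theta}_{H^3}^2\leq C\big(\aiminnorm{\Delta\theta}^2+1\big)\aiminnorm{\hat\theta_t}_{H^1}^2+C.
\]
Integrating on $(t,t+1)$ and using the uniform bound on $\aiminnorm{\Delta\theta}$ together with $\int_t^{t+1}\aiminnorm{\hat\theta_t}_{H^1}^2\,\mathrm{d}s\leq C$ (which follows from~\eqref{eq2.4.35} and~\eqref{eq2.4.36}, since $\kappa\geq\underline\kappa$) yields the first bound in~\eqref{eq2.4.16p}.

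To pass to $\theta$, I would use~\eqref{eq2.3.7} in the form $\Delta\theta=\frac{1}{\kappa(\theta)}\Delta\hat\theta-\frac{\kappa'(\theta)}{\kappa(\theta)}\aiminabs{\nabla\theta}^2$ and estimate $\aiminnorm{\Delta\theta}_{H^1}$. The first term is treated by Lemma~\ref{lem2.5.1} ($\chi=1/\kappa$, $g=\Delta\hat\theta$), producing $C\aiminnorm{\hat\theta}_{H^3}^2$. For the quadratic term I would again combine Lemma~\ref{lem2.5.1} ($\chi=\kappa'/\kappa$, $g=\aiminabs{\nabla\theta}^2$) with Lemma~\ref{lem2.1.5} (applied to $\aiminabs{\nabla\theta}^2$ with $f=g=\nabla\theta$); since $\aiminnorm{\theta}_{H^2}$ is uniformly bounded (Proposition~\ref{prop2.5.3}), this gives a constant plus a top-order term $\epsilon\aiminnorm{\theta}_{H^3}^2$, which is absorbed to yield $\aiminnorm{\theta}_{H^3}^2\leq C\aiminnorm{\hat\theta}_{H^3}^2+C$. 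Integrating on $(t,t+1)$ and using the bound just obtained for $\hat\theta$ gives the second bound in~\eqref{eq2.4.16p}.

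The main obstacle is the apparent circularity in the convective and quadratic terms: the highest-order quantity $\aiminnorm{\hat\theta}_{H^3}$ (respectively $\aiminnorm{\theta}_{H^3}$) reappears on the right through $\aiminnorm{\nabla\hat\theta}_{H^2}$ (respectively $\aiminnorm{\nabla\theta}_{H^2}$). This is exactly why Lemma~\ref{lem2.1.5} is structured to carry that top-order norm with an adjustable small factor $\epsilon$, which makes the absorption possible. A second, structural point is that only a time-averaged bound is attainable, not a pointwise one, because the forcing $\hat\theta_t$ entering $\Delta\hat\theta$ is itself controlled only in time average through~\eqref{eq2.4.36}; this is the reason the statement is phrased for $\int_t^{t+1}(\cdots)\,\mathrm{d}s$.
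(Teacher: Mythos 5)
Your proposal is correct and follows the same overall strategy as the paper's proof: rewrite \eqref{eq2.3.2}$_2$ as an elliptic equation for $\hat\theta$ with homogeneous Dirichlet data, invoke $H^3$ elliptic regularity on the periodic channel, estimate the $H^1$-norms of the right-hand side terms via Lemma~\ref{lem2.5.1}, integrate on $(t,t+1)$ using \eqref{eq2.4.35}--\eqref{eq2.4.36}, and transfer the bound to $\theta$ through the identity \eqref{eq2.3.7}. The one genuine divergence is your treatment of the convective term $\boldsymbol u\cdot\nabla\hat\theta$ (and of the quadratic term in the passage to $\theta$): you run it through Lemma~\ref{lem2.1.5}, which leaves a top-order remainder $\epsilon\aiminnorm{\nabla\hat\theta}_{H^2}^2$ (respectively $\epsilon\aiminnorm{\nabla\theta}_{H^2}^2$) that must be absorbed into the left-hand side, whereas the paper never produces a top-order term here: it uses the direct estimate \eqref{eq2.4.13.1}, namely $\aiminnorm{\nabla(\boldsymbol u\cdot\nabla\hat\theta)}\leq\aiminnorm{\nabla\boldsymbol u}_4\aiminnorm{\nabla\hat\theta}_4+\aiminnorm{\boldsymbol u}_\infty\aiminnorm{\Delta\hat\theta}\leq C\aiminnorm{\boldsymbol u}_{H^2}\aiminnorm{\Delta\hat\theta}$, and, for the quadratic term, a bound (see \eqref{eq2.4.16.1}) that is only \emph{linear} in the $H^3$-norm of $\hat\theta$ and hence directly integrable in time average. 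Both routes work, but they buy different things: the paper's yields the pointwise inequality \eqref{eq2.4.16} with no absorption step, while yours needs two small justifications to be airtight --- first, that the prefactor multiplying $\epsilon$, which carries the factor $\aiminnorm{\Delta\theta}^2+1$ coming from Lemma~\ref{lem2.5.1}, is uniformly bounded (Proposition~\ref{prop2.5.3}) \emph{before} $\epsilon$ is fixed; and second, that $\aiminnorm{\hat\theta(s)}_{H^3}$ and $\aiminnorm{\theta(s)}_{H^3}$ are finite for a.e.\ $s$ so that the absorption is legitimate, which one justifies either at the level of the Galerkin approximations or from the $L^2(0,t_1;H^3(\Omega))$ regularity of strong solutions. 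This is the same device the paper itself employs for the velocity in Proposition~\ref{prop2.5.6}, so your argument is at the same level of rigor; it is simply slightly heavier than needed for the temperature equation.
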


\begin{proof}
We first rewrite \eqref{eq2.3.2}$_2$ as
\begin{equation*}\begin{cases}
	\Delta\hat\theta = \kappa(\theta)^{-1}\big( \hat\theta_t + \boldsymbol u\cdot\nabla\hat\theta - \kappa(\theta)u_2 + \kappa'(\theta)\hat\theta_y\big),\\
	\hat\theta = 0,\quad\text{ on }\partial\Omega.
\end{cases}\end{equation*}
Therefore, we find that
\begin{equation}\begin{split}\label{eq2.4.12}
	\aiminnorm{\hat\theta}_{H^3}^2 \leq C
	\big( \aiminnorm{ \kappa(\theta)^{-1}\hat\theta_t }_{H^1}^2 + \aiminnorm{\kappa(\theta)^{-1}\boldsymbol u\cdot\nabla\hat\theta  }_{H^1}^2 + \aiminnorm{u_2}_{H^1}^2 + \aiminnorm{\frac{\kappa'(\theta)}{\kappa(\theta)}\hat\theta_y }_{H^1}^2\big).
\end{split}\end{equation}
We remark that we do not find an exact reference for the $H^3$ regularity for the Laplace operator in the channel domain and we sketch the proof of \eqref{eq2.4.12} here. First, we find a smooth domain $\widetilde\Omega$ such that
\[
(-1,2)_x\times(0,1)_y\subset \widetilde\Omega \subset (-2,3)_x\times(0,1)_y,
\]
and  choose a mollifier $\varphi$ with compact support contained in $\widetilde\Omega$ and equal to $1$ on $[0,1]_x\times[0,1]_y$. Then applying the elliptic regularity result (see e.g. \cite{Eva98}) to the function $\Delta(\varphi\hat\theta)$ with domain $\widetilde\Omega$ will yield \eqref{eq2.4.12}.

We now need to estimate the right-hand side of \eqref{eq2.4.12}.
As a preliminary, by the Sobolev embedding, we have
\begin{equation}\label{eq2.4.13.1}
\begin{split}
	\aiminnorm{\nabla(\boldsymbol u\cdot\nabla\hat\theta)}
	&\leq \aiminnorm{\nabla\boldsymbol u\cdot\nabla\hat\theta} + \aiminnorm{\boldsymbol u\cdot\nabla(\nabla\hat\theta)}\\
	&\leq \aiminnorm{\nabla\boldsymbol u}_4\aiminnorm{\nabla\hat\theta}_4 + \aiminnorm{\boldsymbol u}_\infty\aiminnorm{\Delta\hat\theta}\\
	&\leq C\aiminnorm{\boldsymbol u}_{H^2}\aiminnorm{\Delta\hat\theta}.
\end{split}
\end{equation}

We estimate each term in the right-hand side of \eqref{eq2.4.12} as follows.

\noindent By Lemma~\ref{lem2.5.1}, we have
\begin{equation}\begin{split}\label{eq2.4.13.2}
	\aiminnorm{ \kappa(\theta)^{-1}\hat\theta_t }_{H^1}^2
	\leq C(\aiminnorm{\Delta\theta}_4^2 + 1)\aiminnorm{\hat\theta_t}_{H^1}^2
	\leq C(\aiminnorm{\Delta\theta}^2 + 1)( \aiminnorm{\nabla\hat\theta_t}^2 + \aiminnorm{\hat\theta_t}^2);
\end{split}\end{equation}
By Lemma~\ref{lem2.5.1} and the Sobolev embedding, we have
\begin{equation}\begin{split}
	\aiminnorm{\kappa(\theta)^{-1}\boldsymbol u\cdot\nabla\hat\theta  }_{H^1}^2
	&\leq  C(\aiminnorm{\Delta\theta}_4^2 + 1)\aiminnorm{ \boldsymbol u\cdot\nabla\hat\theta  }_{H^1}^2\\
	&\leq  C(\aiminnorm{\Delta\theta}_4^2 + 1)\aiminnorm{ \nabla(\boldsymbol u\cdot\nabla\hat\theta)  }^2
	\leq  C(\aiminnorm{\Delta\theta}^2 + 1)\aiminnorm{\boldsymbol u}_{H^2}^2\aiminnorm{\Delta\hat\theta}^2,
\end{split}\end{equation}
where we have used \eqref{eq2.4.13.1}; obviously, we have
\begin{equation}
	\aiminnorm{u_2}_{H^1}^2 \leq \aiminnorm{\boldsymbol u}_{H^1}^2.
\end{equation}
By Lemma~\ref{lem2.5.1},
\begin{equation}\begin{split}\label{eq2.4.13.5}
	\aiminnorm{\frac{\kappa'(\theta)}{\kappa(\theta)} \hat\theta_y }_{H^1}^2
	\leq C(\aiminnorm{\Delta\theta}^2 + 1)\aiminnorm{\hat\theta_y}_{H^1}^2
	\leq C(\aiminnorm{\Delta\theta}^2 + 1)\aiminnorm{\Delta\hat\theta}^2.
\end{split}\end{equation}
Combining all the estimates \eqref{eq2.4.13.2}-\eqref{eq2.4.13.5}, we infer from \eqref{eq2.4.12} that
\begin{equation}\label{eq2.4.16}
	\aiminnorm{\hat\theta}_{H^3}^2 \leq
	C(\aiminnorm{\Delta\theta}^2 + 1)( \aiminnorm{\Delta\hat\theta}^2+\aiminnorm{\boldsymbol u}_{H^2}^2\aiminnorm{\Delta\hat\theta}^2  + \aiminnorm{\hat\theta_t}^2 + \aiminnorm{\nabla\hat\theta_t}^2)+C\aiminnorm{\boldsymbol u}_{H^1}^2,
\end{equation}
which, together with Propositions~\ref{prop2.5.1},~\ref{prop2.5.3}-\ref{prop2.5.4}, implies that \eqref{eq2.4.16p} is valid for $\hat\theta$.

In order to show  that \eqref{eq2.4.16p} is valid for $\theta$, we utilize \eqref{eq2.3.7} and the third identity in \eqref{eq2.3.30.1} to obtain
\[
\aiminnorm{ \Delta\theta }_{H^1}^2 \leq 2\aiminnorm{ \kappa^{-1}(\theta)\Delta\hat\theta}_{H^1}^2 + 2\aiminnorm{ \frac{\kappa'(\theta)}{\kappa^3(\theta)}\aiminabs{\nabla\hat\theta}^2 }^2_{H^1},
\]
which, together with Lemma~\ref{lem2.5.1} and the Sobolev embedding, implies that
\begin{equation}\begin{split}\label{eq2.4.16.1}
	\aiminnorm{ \Delta\theta }_{H^1}^2
	&\leq C(\aiminnorm{\Delta\theta}^2 + 1)\big(\aiminnorm{ \Delta\hat\theta }_{H^1}^2 + \aiminnorm{ \aiminabs{\nabla\hat\theta}^2 }^2_{H^1}\big)\\
	&\leq C(\aiminnorm{\Delta\theta}^2 + 1)\big(\aiminnorm{ \hat\theta }_{H^3}^2 + \aiminnorm{ \nabla\hat\theta}_4^2 + \aiminnorm{ \nabla\hat\theta\cdot\nabla(\nabla\hat\theta)} \big)\\
	&\leq C(\aiminnorm{\Delta\theta}^2 + 1)\big(\aiminnorm{ \hat\theta }_{H^3}^2 + \aiminnorm{ \Delta\hat\theta }^2 + \aiminnorm{\Delta\hat\theta}\aiminnorm{\Delta\hat\theta}_{H^1} \big).
\end{split}\end{equation}
Then, with Proposition~\ref{prop2.5.3} and \eqref{eq2.4.16p} for $\hat\theta$, we infer from \eqref{eq2.4.16.1} that \eqref{eq2.4.16p} is also valid for $\theta$. This completes the proof of Proposition~\ref{prop2.5.5}.
\end{proof}

\begin{proposition}\label{prop2.5.6}
	Under the assumptions of Theorem~\ref{thm1.3}, we have
	\begin{equation}\label{eq2.4.20}
		\int_t^{t+1}\big( \aiminnorm{\boldsymbol u(s)}_{H^3}^2 + \aiminnorm{\tilde p(s) }_{H^2}^2\big){\mathrm{d}s} \leq C,\quad\forall\,t\geq  t_0+5.
	\end{equation}
\end{proposition}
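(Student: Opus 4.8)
The plan is to repeat the argument of Proposition~\ref{prop2.5.4}, but one order of differentiability higher, so that the Stokes reformulation \eqref{eq2.4.46} again does all the structural work. Starting from \eqref{eq2.4.46}, I would invoke the higher-order regularity theory for the constant-coefficient Stokes system --- the $H^3$-$H^2$ analogue of \cite[Chapter I, Proposition 2.2]{Tem01}, which applies to the periodic channel with free-slip conditions with only easier proofs --- to obtain
\[
\aiminnorm{\boldsymbol u}_{H^3}^2 + \aiminnorm{\tilde p}_{H^2}^2 \leq C\aiminnorm{\boldsymbol f}_{H^1}^2,
\]
where $\boldsymbol f$ is the right-hand side of \eqref{eq2.4.46}$_1$. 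The whole proof then reduces to estimating $\aiminnorm{\boldsymbol f}_{H^1}$ term by term.

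Next I would go through the six terms of $\boldsymbol f$. For the first three, each of the form $\chi(\theta)\cdot(\text{product})$ with $\chi\in\{\nu'/\nu,\,1/\nu\}$, the plan is to first peel off the coefficient $\chi(\theta)$ using Lemma~\ref{lem2.5.1} (whose hypotheses on $\theta$ are guaranteed by \eqref{eq2.4.41}), reducing the $H^1$-estimate to $C(\aiminnorm{\Delta\theta}^2+1)$ times the squared $H^1$-norm of the product, and then to bound $\aiminnorm{\tilde p\nabla\theta}_{H^1}^2$, $\aiminnorm{\nabla\theta\,\nabla\boldsymbol u}_{H^1}^2$, and $\aiminnorm{\boldsymbol u\cdot\nabla\boldsymbol u}_{H^1}^2$ by Lemma~\ref{lem2.1.5}. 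The crucial feature of Lemma~\ref{lem2.1.5} is that the top-order norms $\aiminnorm{\tilde p}_{H^2}^2$, $\aiminnorm{\nabla\theta}_{H^2}^2\approx\aiminnorm{\theta}_{H^3}^2$, and $\aiminnorm{\nabla\boldsymbol u}_{H^2}^2\approx\aiminnorm{\boldsymbol u}_{H^3}^2$ appear only with an arbitrarily small coefficient $\epsilon$, while the remaining factors $\aiminnorm{\tilde p}_{H^1}$, $\aiminnorm{\theta}_{H^2}$, $\aiminnorm{\boldsymbol u}_{H^2}$ are already uniformly bounded by \eqref{eq2.4.45}, Proposition~\ref{prop2.5.3} and Proposition~\ref{prop2.5.4}. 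The force term $\frac{1}{\nu(\theta)}\boldsymbol u_t$ is again handled by Lemma~\ref{lem2.5.1}, giving $C(\aiminnorm{\Delta\theta}^2+1)\aiminnorm{\boldsymbol u_t}_{H^1}^2$, and the last two linear terms are plainly bounded in $H^1$ using \eqref{eq2.4.42}.

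Collecting these estimates and choosing $\epsilon$ small enough, the $\epsilon$-multiples of $\aiminnorm{\boldsymbol u}_{H^3}^2$ and $\aiminnorm{\tilde p}_{H^2}^2$ can be absorbed into the left-hand side, leaving an inequality of the shape
\[
\aiminnorm{\boldsymbol u}_{H^3}^2 + \aiminnorm{\tilde p}_{H^2}^2 \leq C(\aiminnorm{\Delta\theta}^2+1)\big(1 + \aiminnorm{\theta}_{H^3}^2 + \aiminnorm{\nabla\boldsymbol u_t}^2 + \aiminnorm{\boldsymbol u_t}^2\big) + C.
\]
Finally I would integrate this over $(t,t+1)$: since $\aiminnorm{\Delta\theta}$ and $\aiminnorm{\boldsymbol u_t}$ are uniformly bounded by Propositions~\ref{prop2.5.3} and~\ref{prop2.5.2}, while the time averages of $\aiminnorm{\theta}_{H^3}^2$ and $\aiminnorm{\nabla\boldsymbol u_t}^2$ are controlled by Proposition~\ref{prop2.5.5} and \eqref{eq2.4.5}, the right-hand side integrates to a constant, which yields \eqref{eq2.4.20}.

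I expect the main obstacle to be the pressure term $\frac{\nu'(\theta)}{\nu(\theta)}\tilde p\nabla\theta$: its $H^1$-norm a priori involves $\aiminnorm{\nabla\tilde p\cdot\nabla\theta}$, and since $\nabla\tilde p$ is controlled only in $L^2$ at this stage (via \eqref{eq2.4.45}), a naive H\"older split would demand $\nabla\tilde p\in L^4$, i.e. precisely the $H^2$-regularity of $\tilde p$ we are trying to establish. The resolution is to route this term through Lemma~\ref{lem2.1.5}, which quarantines $\aiminnorm{\tilde p}_{H^2}^2$ behind a small coefficient so that it reabsorbs on the left. The delicate bookkeeping step will be to verify that \emph{all} the top-order quantities produced in this way --- $\aiminnorm{\boldsymbol u}_{H^3}$ and $\aiminnorm{\tilde p}_{H^2}$ (absorbed pointwise in $t$), and $\aiminnorm{\theta}_{H^3}$ and $\aiminnorm{\nabla\boldsymbol u_t}$ (controlled only after the time integration) --- land in the correct category.
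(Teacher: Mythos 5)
Your proposal follows essentially the same route as the paper's proof: apply the $H^3$--$H^2$ Stokes regularity to \eqref{eq2.4.46}, estimate $\aiminnorm{\boldsymbol f}_{H^1}$ term by term using Lemma~\ref{lem2.5.1} to peel off the temperature-dependent coefficients and Lemma~\ref{lem2.1.5} to place the top-order norms $\aiminnorm{\tilde p}_{H^2}$, $\aiminnorm{\boldsymbol u}_{H^3}$ behind a small $\epsilon$ (absorbed on the left) while leaving $\aiminnorm{\nabla\boldsymbol u_t}^2$ and the $H^3$-norm of the temperature to be controlled only after integrating over $(t,t+1)$ via \eqref{eq2.4.5} and Proposition~\ref{prop2.5.5}. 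The only cosmetic difference is that the paper first rewrites $\nabla\theta=\kappa(\theta)^{-1}\nabla\hat\theta$ so that the surviving top-order temperature term is $\aiminnorm{\hat\theta}_{H^3}^2$ rather than your $\aiminnorm{\theta}_{H^3}^2$; since Proposition~\ref{prop2.5.5} bounds the time average of both, this changes nothing of substance.
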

\begin{proof}
In order to find the control of the time average of $\aiminnorm{\boldsymbol u}_{H^3}^2$, we employ the regularity results for Stokes system \eqref{eq2.4.46} (see \cite{Tem01}) again and obtain that
\begin{equation}\begin{split}\label{eq2.4.17}
	\aiminnorm{\boldsymbol u}_{H^3}^2 + \aiminnorm{\tilde p }_{H^2}^2
	\leq \aiminnorm{\boldsymbol f}_{H^1}^2,
\end{split}\end{equation}
where $\bf f$ is that in \eqref{eq2.4.46}.
We now need to estimate $\aiminnorm{\boldsymbol f}_{H^1}$.
We first recall that
\begin{equation}\begin{split}\label{eq2.4.18.0}
\aiminnorm{\theta}_{H^2}, \aiminnorm{\hat\theta}_{H^2}, \aiminnorm{\boldsymbol u}_{H^2},\aiminnorm{\tilde p}_{H^1} \leq C,\quad\forall\,t\geq  t_0+5,
\end{split}\end{equation}
and similar to \eqref{eq2.4.13.1}, we also have
\begin{equation}\begin{split}\label{eq2.4.18.1}
	\aiminnorm{\nabla(\boldsymbol u\cdot\nabla\boldsymbol u)}
	\leq C\aiminnorm{\boldsymbol u}_{H^2}\aiminnorm{\Delta\boldsymbol u} \leq C\aiminnorm{\boldsymbol u}_{H^2}^2.
\end{split}\end{equation}
The estimates \eqref{eq2.4.18.2}-\eqref{eq2.4.18.6} below are valid only for $t\geq  t_0+5$ since we employed the uniform bounds \eqref{eq2.4.18.0}.

\noindent By Lemma~\ref{lem2.5.1} and Lemma~\ref{lem2.1.5}, we have
\begin{equation}\begin{split}\label{eq2.4.18.2}
	&\aiminnorm{ \frac{\nu'(\theta) }{\nu(\theta)}\tilde p\nabla\theta }_{H^1}^2
	=
	\aiminnorm{ \frac{\nu'(\theta) }{\nu(\theta)\kappa(\theta)}\tilde p\nabla\hat\theta }_{H^1}^2
	\leq C(\aiminnorm{\Delta\theta}^2 + 1)\aiminnorm{ \tilde p\nabla\hat\theta }_{H^1}^2\\
	&\quad\leq C\big[\aiminnorm{\tilde p}_{H^1}^2\aiminnorm{\nabla\hat\theta}_{H^1}^2(1+\aiminnorm{\tilde p}_{H^1}^2+\aiminnorm{\nabla\hat\theta}_{H^1}^2) + \epsilon (\aiminnorm{\tilde p}_{H^2}^2+\aiminnorm{\nabla\hat\theta}_{H^2}^2 ) \big]\\
	&\quad\leq C(C+ \epsilon (\aiminnorm{\tilde p}_{H^2}^2+\aiminnorm{\nabla\hat\theta}_{H^2}^2 ).
\end{split}\end{equation}
\noindent By Lemma~\ref{lem2.5.1} and Lemma~\ref{lem2.1.5}, we have
\begin{equation}\begin{split}
	&\aiminnorm{  \frac{\nu'(\theta) }{\nu(\theta)}\nabla\theta\nabla\boldsymbol u }_{H^1}^2
	=	\aiminnorm{  \frac{\nu'(\theta) }{\nu(\theta)\kappa(\theta)}\nabla\hat\theta\nabla\boldsymbol u }_{H^1}^2
	\leq C(\aiminnorm{\Delta\theta}_4^2 + 1)\aiminnorm{ \nabla\hat\theta\nabla\boldsymbol u}_{H^1}^2\\
	&\;\leq C\big[\aiminnorm{\nabla\hat\theta}_{H^1}^2\aiminnorm{\nabla\boldsymbol u}_{H^1}^2(1+\aiminnorm{\nabla\hat\theta}_{H^1}^2+\aiminnorm{\nabla\boldsymbol u}_{H^1}^2)+\epsilon (\aiminnorm{\nabla\hat\theta}_{H^2}^2+\aiminnorm{\nabla\boldsymbol u}_{H^2}^2)\big]\\
	&\;\leq C(C+ \epsilon (\aiminnorm{\nabla\hat\theta}_{H^2}^2 +\aiminnorm{\nabla\boldsymbol u}_{H^2}^2)).
\end{split}\end{equation}
\noindent By Lemma~\ref{lem2.5.1}, the Sobolev embedding, and \eqref{eq2.4.18.1}, we have
\begin{equation}\begin{split}
	\aiminnorm{ \frac{1}{\nu(\theta)}\boldsymbol u\cdot\nabla\boldsymbol u}_{H^1}^2
	&\leq C(\aiminnorm{\Delta\theta}_4^2 + 1)\aiminnorm{ \boldsymbol u\cdot\nabla\boldsymbol u  }_{H^1}^2\\
	&\leq C\aiminnorm{\nabla( \boldsymbol u\cdot\nabla\boldsymbol u ) }^2
	\leq C\aiminnorm{\boldsymbol u}_{H^2}^2 \leq C.
\end{split}\end{equation}
\noindent By Lemma~\ref{lem2.5.1}, we have
\begin{equation}\begin{split}\label{eq2.4.18.6}
	\aiminnorm{  \frac{1}{\nu(\theta)} \boldsymbol u_t   }_{H^1}^2 &+ \aiminnorm{  \frac{1}{\nu(\theta)} \theta\boldsymbol e_2   }_{H^1}^2 + \aiminnorm{  \frac{1}{\nu(\theta)} (1-y)\boldsymbol e_2 }_{H^1}^2\\
	&\leq C(\aiminnorm{\Delta\theta}^2 +1)( \aiminnorm{\boldsymbol u_t}_{H^1}^2 + \aiminnorm{\theta}_{H^1}^2 + \aiminnorm{1-y}_{H^1}^2)\\
	&\leq C(\aiminnorm{\Delta\theta}^2 +1)( \aiminnorm{\boldsymbol u_t}^2 + \aiminnorm{\nabla\boldsymbol u_t}^2+ \aiminnorm{\theta}_{H^1}^2 + 2  )\\
	&\leq C(1 + \aiminnorm{\nabla\boldsymbol u_t}^2 ).
\end{split}\end{equation}

Therefore,  combining the estimates \eqref{eq2.4.18.2}-\eqref{eq2.4.18.6} and choosing $\epsilon$ small enough, we find from \eqref{eq2.4.17} that
\begin{equation}\label{eq2.4.19}
	\aiminnorm{\boldsymbol u}_{H^3}^2 + \aiminnorm{\tilde p }_{H^2}^2
	\leq
	C + C\aiminnorm{\nabla \boldsymbol u_t}^2 + \aiminnorm{\hat\theta}_{H^3}^2,\quad\forall\,t\geq  t_0+5.
\end{equation}
which implies \eqref{eq2.4.20} by Propositions~\ref{prop2.5.2},~\ref{prop2.5.5}. This completes the proof of Proposition~\ref{prop2.5.6}.
\end{proof}

\section{Continuity  with respect to the initial data}\label{sec3}
The existence of global strong solutions for the Boussinesq system \eqref{eq1.1.3}-\eqref{eq1.1.4}  follows from the uniform estimates proved in Section~\ref{sec2}. In order to finish proving Theorem~\ref{thm1.2}, we are left to show the uniqueness and continuity of the strong solutions. The continuity of the strong solutions will be proved in Section~\ref{sec4} and
in this section, we are going to prove the uniqueness of the strong solutions. Note that all the  estimates above are valid on any interval $[0, t_1]$. The fact that they are stated for $t\geq t_0+5$  was meant to get bounds independent of the initial data to obtain the corresponding absorbing sets.

In this section,  we will show that for any fixed $t>0$, the mapping
\begin{equation}\label{eq3.1.1}
	(\boldsymbol u_0,\theta_0)=(\boldsymbol u(0),\theta(0))\mapsto (\boldsymbol u(t),\theta(t))	
\end{equation}
is Lipschitz continuous from $H^2(\Omega)$ into itself for all strong solutions. Suppose that for $i=1,2$, $(\boldsymbol u^{(i)},p^{(i)}, \theta^{(i)})$ are two strong solutions to the Boussinesq system \eqref{eq1.1.3}-\eqref{eq1.1.4}, with initial data $(\boldsymbol u_0^{(i)},\theta_0^{(i)})$ belonging to $H_0^1(\Omega)\cap H^2(\Omega)$. Let
\begin{equation}\label{eq3.1.0}
	\boldsymbol v=\boldsymbol u^{(1)} - \boldsymbol u^{(2)},\quad\quad \eta = \theta^{(1)} - \theta^{(2)},\quad\quad p=p^{(1)}-p^{(2)}.
\end{equation}
Then we have the following equations for $\boldsymbol v$ and $\eta$:
\begin{equation}\label{eq3.1.2}
	\begin{cases}
		\partial_t \boldsymbol v - {\mathrm{div}\,}(\nu(\theta^{(1)})\nabla\boldsymbol v) +\boldsymbol u^{(1)}\cdot\nabla\boldsymbol v + \nabla p
		\\
		 \hspace{100pt} = - {\mathrm{div}\,}(( \nu(\theta^{(2)}) - \nu(\theta^{(1)}) ) \nabla\boldsymbol u^{(2)})-\boldsymbol v\cdot\nabla \boldsymbol u^{(2)}  +\eta\boldsymbol e_2,\\
		 {\mathrm{div}\,}\boldsymbol v=0,\\
		 \partial_t\eta -{\mathrm{div}\,}(\kappa(\theta^{(1)})\nabla\eta) + \boldsymbol u^{(1)}\cdot\nabla\eta-v_2 \\
		 \hspace{100pt}= -{\mathrm{div}\,}( ( \kappa(\theta^{(2)}) - \kappa(\theta^{(1)}) ) \nabla\theta^{(2)})- \boldsymbol v\cdot\nabla\theta^{(2)}\\
		 \hspace{115pt}-\kappa'(\theta^{(1)} )\theta^{(1)}_y + \kappa'(\theta^{(2)} )\theta^{(2)}_y,
	\end{cases}
\end{equation}
which, by expanding the divergence terms in the left-hand sides of the equations, is equivalent to:
\begin{equation}\label{eq3.1.2p}
	\begin{cases}
		\partial_t \boldsymbol v - \nu(\theta^{(1)})\Delta\boldsymbol v - \nu'(\theta^{(1)})\nabla \theta^{(1)}\cdot\nabla\boldsymbol v +\boldsymbol u^{(1)}\cdot\nabla\boldsymbol v + \nabla p
		\\
		 \hspace{100pt} = - {\mathrm{div}\,}(( \nu(\theta^{(2)}) - \nu(\theta^{(1)}) ) \nabla\boldsymbol u^{(2)})-\boldsymbol v\cdot\nabla \boldsymbol u^{(2)}  +\eta\boldsymbol e_2,\\
		 {\mathrm{div}\,}\boldsymbol v=0,\\
		 \partial_t\eta -\kappa(\theta^{(1)})\Delta\eta -  \kappa'(\theta^{(1)})\nabla \theta^{(1)}\cdot\nabla\eta+ \boldsymbol u^{(1)}\cdot\nabla\eta -v_2\\
		 \hspace{100pt}= -{\mathrm{div}\,}( ( \kappa(\theta^{(2)}) - \kappa(\theta^{(1)}) ) \nabla\theta^{(2)})- \boldsymbol v\cdot\nabla\theta^{(2)}\\
		 \hspace{115pt}-\kappa'(\theta^{(1)} )\theta^{(1)}_y + \kappa'(\theta^{(2)} )\theta^{(2)}_y.
	\end{cases}
\end{equation}

We first show that the mapping \eqref{eq3.1.1} is Lipschitz continuous in the space $H_0^1(\Omega)$ and then extend the result to the space $H^2(\Omega)$. In this section, we denote by
$$\mathcal Q:=\mathcal Q\big(\aiminnorm{\boldsymbol u_0^{(1)}}_{H^2},\,\aiminnorm{\boldsymbol u_0^{(2)}}_{H^2},\, \aiminnorm{\theta_0^{(1)}}_{H^2},\,\aiminnorm{\theta_0^{(2)}}_{H^2}\big)$$
a positive constant which depends in an increasing manner on the $H^2$-norms of the initial data $\boldsymbol u_0^{(i)}$ and $\theta_0^{(i)}$ ($i=1,2$), but is independent of time $t$, and the constant $\mathcal Q$ may vary from line to line. By the global well-posedness result (Theorem~\ref{thm1.2}), we have
\begin{equation}\label{eq3.1.3}
\begin{split}
	\aiminnorm{ (\boldsymbol u^{(i)}(t),\theta^{(i)}(t)) }_{H^2},\,\aiminnorm{ (\boldsymbol u_t^{(i)}(t),\theta_t^{(i)}(t)) }_{L^2},\,\aiminnorm{ p^{(i)}(t) }_{H^1} \leq \mathcal Q,\quad\forall\, i=1,2,\quad\forall\,t\geq 0.\\
\end{split}	
\end{equation}
By \eqref{eq3.1.3}, we also have
\begin{equation}\label{eq3.1.4}
\begin{split}
	\aiminnorm{ (\boldsymbol v(t),\eta(t)) }_{H^2},\,\aiminnorm{ (\boldsymbol v_t(t),\eta_t(t)) },\,\aiminnorm{ p(t) }_{H^1} \leq \mathcal Q,\quad\forall\,t\geq 0.\\
\end{split}	
\end{equation}
By Remark~\ref{rmk2.0.1} and the Sobolev embedding, we also have
\begin{equation}\label{eq3.1.5}
	\aiminnorm{ \theta^{(i)} }_{L^\infty((0, t_1)\times\Omega)}\leq \mathcal Q,\quad\forall\,i=1,2,\quad\forall\, t_1>0.
\end{equation}
In the remaining of this section, we should bear in mind the estimates \eqref{eq3.1.3}-\eqref{eq3.1.5} which will be used frequently without referring to them.

\subsection{Continuity in $H_0^1(\Omega)$}
We are now going to establish the Lipschitz continuous result for the mapping \eqref{eq3.1.1} from $H^2(\Omega)$ to $H^1_0(\Omega)$.
Taking the inner product of  \eqref{eq3.1.2p}$_{1,3}$ with $(-\Delta\boldsymbol v,-\Delta\eta)$ in $L^2(\Omega)$ yields
\begin{equation}\begin{split}\label{eq3.2.1}
	\frac12\frac{\mathrm{d}}{\mathrm{d}t}\aiminnorm{\nabla\boldsymbol v}^2 + \int_\Omega\nu(\theta^{(1)})\aiminabs{\Delta\boldsymbol v}^2&{\mathrm{d}x}{\mathrm{d}y}
	= \int_\Omega {\mathrm{div}\,}(( \nu(\theta^{(2)}) - \nu(\theta^{(1)}) ) \nabla\boldsymbol u^{(2)})\cdot\Delta\boldsymbol v{\mathrm{d}x}{\mathrm{d}y}\\
&+\int_\Omega  \boldsymbol v\cdot\nabla \boldsymbol u^{(2)}\cdot\Delta\boldsymbol v{\mathrm{d}x}{\mathrm{d}y} +\int_\Omega \boldsymbol u^{(1)}\cdot\nabla\boldsymbol v\cdot\Delta\boldsymbol v{\mathrm{d}x}{\mathrm{d}y} \\
	&-\int_\Omega \nu'(\theta^{(1)})\nabla \theta^{(1)}\cdot\nabla\boldsymbol v \cdot\Delta \boldsymbol v{\mathrm{d}x}{\mathrm{d}y} - \int_\Omega\eta\boldsymbol e_2\cdot \Delta\boldsymbol v{\mathrm{d}x}{\mathrm{d}y}\\
	=&:I_1 + I_2 +I_3 + I_4 + I_5.
\end{split}\end{equation}
\begin{equation}\begin{split}\label{eq3.2.2}
	\frac12\frac{\mathrm{d}}{\mathrm{d}t}\aiminnorm{\nabla\eta}^2 + \int_\Omega \kappa(\theta^{(1)})\aiminabs{\Delta\eta}^2&{\mathrm{d}x}{\mathrm{d}y}
	=	\int_\Omega {\mathrm{div}\,}( ( \kappa(\theta^{(2)}) - \kappa(\theta^{(1)}) ) \nabla\theta^{(2)})\cdot\Delta\eta{\mathrm{d}x}{\mathrm{d}y}\\
	&+\int_\Omega \boldsymbol v\cdot\nabla\theta^{(2)}\cdot\Delta\eta{\mathrm{d}x}{\mathrm{d}y} + \int_\Omega \boldsymbol u^{(1)}\cdot\nabla\eta\cdot\Delta\eta{\mathrm{d}x}{\mathrm{d}y} \\
	& -\int_\Omega \kappa'(\theta^{(1)})\nabla \theta^{(1)}\cdot\nabla\eta \cdot\Delta \eta{\mathrm{d}x}{\mathrm{d}y} + \int_\Omega v_2\cdot\Delta\eta{\mathrm{d}x}{\mathrm{d}y}\\
	&+\int_\Omega (-\kappa'(\theta^{(1)} )\theta^{(1)}_y + \kappa'(\theta^{(2)} )\theta^{(2)}_y) \cdot\Delta\eta{\mathrm{d}x}{\mathrm{d}y}\\
	=&:J_1 + J_2 + J_3 + J_4+J_5+J_6.
\end{split}\end{equation}

As a preliminary result, we observe by \eqref{eq3.1.5}  that for any $\chi(\tau)\in \mathcal C(\mathbb R)$, we have the following inequalities:
\begin{equation}\label{eq3.2.3}
\begin{split}
	\aiminabs{\chi(\theta^{(2)}) - \chi(\theta^{(1)})} &\leq \mathcal Q \aiminabs{\theta^{(2)} - \theta^{(1)} }=\mathcal Q\aiminabs{\eta},\qquad\text{a.e. }(x,y)\in\Omega, \text{ a.e }t\in[0, t_1].\\
	\aiminabs{\chi(\theta^{(i)})}&\leq \mathcal Q,\quad\forall\,i=1,2,\qquad\text{a.e. }(x,y)\in\Omega, \text{ a.e }t\in[0, t_1].
\end{split}
\end{equation}
We also provide two useful lemmas.
\begin{lemma}\label{lem3.2.1}
	Let $\chi(\tau)\in\mathcal C^2(\mathbb R)$, $g\in H_0^1(\Omega)\cap H^2(\Omega)$, and $\theta^{(1)}, \theta^{(2)}$ be those in \eqref{eq3.1.0}. Then
	\begin{equation}\begin{split}\label{lem3.2.1.eq0}
		\aiminnorm{{\mathrm{div}\,}\big((\chi( \theta^{(2)})-\chi(\theta^{(1)}) )\nabla g\big)}^2
		\leq \mathcal Q\aiminnorm{\nabla\eta}\aiminnorm{\Delta\eta}\aiminnorm{\Delta g}^2 + \mathcal Q\aiminnorm{\nabla\eta}^2 \aiminnorm{\Delta g}^2.
	\end{split}\end{equation}
\end{lemma}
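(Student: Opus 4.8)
The plan is to expand the divergence by the product rule. Writing $w:=\chi(\theta^{(2)})-\chi(\theta^{(1)})$, we have
\[
{\mathrm{div}\,}\big(w\nabla g\big)=\nabla w\cdot\nabla g + w\,\Delta g,
\]
and I would estimate the two contributions in $L^2(\Omega)$ separately. For the gradient factor I would further split, using $\nabla\theta^{(2)}-\nabla\theta^{(1)}=-\nabla\eta$,
\[
\nabla w=\chi'(\theta^{(2)})\nabla\theta^{(2)}-\chi'(\theta^{(1)})\nabla\theta^{(1)}=-\chi'(\theta^{(1)})\nabla\eta+\big(\chi'(\theta^{(2)})-\chi'(\theta^{(1)})\big)\nabla\theta^{(2)}.
\]
Throughout, the coefficient differences are controlled by the $\mathcal C^2$-hypothesis together with \eqref{eq3.2.3}: since $\theta^{(1)},\theta^{(2)}$ are bounded in $L^\infty$ by $\mathcal Q$ (see \eqref{eq3.1.5}), both $\chi$ and $\chi'$ are Lipschitz on the relevant range, so that $\aiminabs{w}\le\mathcal Q\aiminabs{\eta}$ and $\aiminabs{\chi'(\theta^{(2)})-\chi'(\theta^{(1)})}\le\mathcal Q\aiminabs{\eta}$ pointwise, while $\aiminabs{\chi'(\theta^{(1)})}\le\mathcal Q$.

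This reduces the estimate to three integrals. First, $\aiminnorm{w\Delta g}^2\le\mathcal Q\int_\Omega\aiminabs{\eta}^2\aiminabs{\Delta g}^2\le\mathcal Q\aiminnorm{\eta}_\infty^2\aiminnorm{\Delta g}^2$; since $\eta\in H_0^1(\Omega)\cap H^2(\Omega)$, Agmon's inequality and the equivalent norms give $\aiminnorm{\eta}_\infty^2\le C\aiminnorm{\nabla\eta}\aiminnorm{\Delta\eta}$, producing the first term on the right of \eqref{lem3.2.1.eq0}. Second, the piece $\chi'(\theta^{(1)})\nabla\eta\cdot\nabla g$ yields $\int_\Omega\aiminabs{\nabla\eta}^2\aiminabs{\nabla g}^2\le\aiminnorm{\nabla\eta}_4^2\aiminnorm{\nabla g}_4^2$; Ladyzhenskaya's inequality bounds $\aiminnorm{\nabla\eta}_4^2\le C\aiminnorm{\nabla\eta}\aiminnorm{\nabla\eta}_{H^1}\le C\aiminnorm{\nabla\eta}\aiminnorm{\Delta\eta}$ and $\aiminnorm{\nabla g}_4^2\le C\aiminnorm{\Delta g}^2$, again giving the first term. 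Third, the piece $\big(\chi'(\theta^{(2)})-\chi'(\theta^{(1)})\big)\nabla\theta^{(2)}\cdot\nabla g$ gives $\mathcal Q\int_\Omega\aiminabs{\eta}^2\aiminabs{\nabla\theta^{(2)}}^2\aiminabs{\nabla g}^2$; here I would place $\eta$ in $L^4$ and $\nabla\theta^{(2)},\nabla g$ in $L^8$, using $\aiminnorm{\eta}_4\le C\aiminnorm{\nabla\eta}$ (Sobolev embedding, valid since $\eta\in H_0^1(\Omega)$), $\aiminnorm{\nabla\theta^{(2)}}_8\le\mathcal Q$, and $\aiminnorm{\nabla g}_8\le C\aiminnorm{\nabla g}_{H^1}\le C\aiminnorm{\Delta g}$, which produces exactly the second term $\mathcal Q\aiminnorm{\nabla\eta}^2\aiminnorm{\Delta g}^2$. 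Collecting the three bounds yields \eqref{lem3.2.1.eq0}.

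The main obstacle is the term $w\,\Delta g$: because $g$ is assumed only in $H^2(\Omega)$, the factor $\Delta g$ lies merely in $L^2(\Omega)$ and cannot carry any derivative, so $w$ must be estimated in $L^\infty$. This is what forces the use of Agmon's inequality and hence the appearance of the full second-order norm $\aiminnorm{\Delta\eta}$ in the first term of the bound; the second term is the gentler contribution, where the gradient of $\eta$ can be kept in $L^4$ and no $\aiminnorm{\Delta\eta}$ is needed. Some care is also required to check that the Lipschitz bounds \eqref{eq3.2.3} apply to $\chi'$ (which is where $\chi\in\mathcal C^2$ enters) and that all the $H^2$-norms of $\theta^{(1)},\theta^{(2)}$ absorbed into $\mathcal Q$ are genuinely bounded, as guaranteed by \eqref{eq3.1.3}.
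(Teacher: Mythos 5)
Your proposal is correct and follows essentially the same route as the paper: the same product-rule expansion of the divergence, the same splitting of $\nabla\big(\chi(\theta^{(2)})-\chi(\theta^{(1)})\big)$ into $-\chi'(\theta^{(1)})\nabla\eta$ plus a $\chi'$-difference term, Agmon's inequality to handle the $\big(\chi(\theta^{(2)})-\chi(\theta^{(1)})\big)\Delta g$ piece, and Ladyzhenskaya/Sobolev bounds for the two gradient pieces. The only cosmetic difference is the distribution of H\"older exponents in the third term ($L^4$--$L^8$--$L^8$ rather than the paper's $L^8$--$L^8$--$L^4$), which yields the same bound.
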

\begin{proof}
We first write
\begin{equation*}\begin{split}
	{\mathrm{div}\,}((\chi( \theta^{(2)})-\chi(\theta^{(1)}) )&\nabla g)=\big(\chi( \theta^{(2)})-\chi(\theta^{(1)})\big)\Delta g \\
	&+ \bigg[\big(\chi'( \theta^{(2)})-\chi'(\theta^{(1)})\big) \nabla\theta^{(2)} +  \chi'(\theta^{(1)})( \nabla\theta^{(2)} - \nabla\theta^{(1)}) \bigg]\nabla g,
\end{split}\end{equation*}
then by using \eqref{eq3.2.3}, we obtain
\begin{equation}\begin{split}\label{lem3.2.1.eq1}
	\aiminnorm{{\mathrm{div}\,}\big((\chi( \theta^{(1)})-\chi(\theta^{(2)}) )\nabla g\big)}^2
	\leq \mathcal Q\big( \aiminnorm{\eta\Delta g}^2 + \aiminnorm{ \eta \nabla\theta^{(2)} \nabla g}^2 + \aiminnorm{ \nabla\eta \nabla g}^2\big),
\end{split}\end{equation}
where $\eta=\theta^{(1)}-\theta^{(2)}$. By Agmon's inequality, we have
\begin{equation}
	\aiminnorm{\eta\Delta g}^2\leq \aiminnorm{\eta}_\infty^2 \aiminnorm{\Delta g}^2 \leq C\aiminnorm{\nabla\eta}\aiminnorm{\Delta\eta}\aiminnorm{\Delta g}^2;
\end{equation}
by H\"older's inequality and the Sobolev embedding, we have
\begin{equation}
	\aiminnorm{ \eta \nabla\theta^{(2)} \nabla g}^2
	\leq \aiminnorm{\eta}_8^2\aiminnorm{\nabla\theta^{(2)}}_8^2\aiminnorm{\nabla g}_4^2
	\leq C\aiminnorm{\nabla\eta}^2 \aiminnorm{\Delta\theta^{(2)}}^2 \aiminnorm{\Delta g}^2;
\end{equation}
by H\"older's inequality and the Sobolev embedding, we have
\begin{equation}\label{lem3.2.1.eq5}
	\aiminnorm{ \nabla\eta \nabla g}^2\leq \aiminnorm{\nabla\eta}_4^2 \aiminnorm{\nabla g}_4^2
	\leq C\aiminnorm{\nabla\eta}\aiminnorm{\Delta\eta}\aiminnorm{\Delta g}^2.
\end{equation}
Combining the estimates \eqref{lem3.2.1.eq1}-\eqref{lem3.2.1.eq5} together and utilizing \eqref{eq3.1.3} for $\aiminnorm{\Delta\theta^{(2)}}$ yield \eqref{lem3.2.1.eq0}. This completes the proof of Lemma~\ref{lem3.2.1}.
\end{proof}
\begin{lemma}\label{lem3.2.2}
	Fix $q$ such that $2\leq q<+\infty$ and let $\chi=\chi(\cdot)\in\mathcal C(\mathbb R)$, $g^{(1)}, g^{(2)}\in H^1(\Omega)$, and $\theta^{(1)}, \theta^{(2)}$ be those in \eqref{eq3.1.0}. Then
	\begin{equation}\begin{split}\label{lem3.2.2.eq0}
		\aiminnorm{ \chi( \theta^{(1)})g^{(1)}-\chi(\theta^{(2)})g^{(2)} }_q
		\leq  \mathcal Q\aiminnorm{ g^{(1)}}_{H^1} \aiminnorm{\nabla\eta} +  \mathcal Q\aiminnorm{ g^{(1)} - g^{(2)} }_q.
	\end{split}\end{equation}
\end{lemma}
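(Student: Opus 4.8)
The plan is to reduce the estimate to a standard product bound via a single algebraic splitting. First I would insert the cross term $\chi(\theta^{(2)})g^{(1)}$ to write
\begin{equation*}
\chi(\theta^{(1)})g^{(1)} - \chi(\theta^{(2)})g^{(2)} = \big(\chi(\theta^{(1)}) - \chi(\theta^{(2)})\big)g^{(1)} + \chi(\theta^{(2)})\big(g^{(1)} - g^{(2)}\big),
\end{equation*}
and then apply the triangle inequality for the $L^q$-norm so that the two pieces on the right can be treated separately.

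For the second piece I would invoke the pointwise bound \eqref{eq3.2.3}$_2$, namely $\aiminabs{\chi(\theta^{(2)})} \leq \mathcal Q$ almost everywhere, which immediately gives $\aiminnorm{\chi(\theta^{(2)})(g^{(1)}-g^{(2)})}_q \leq \mathcal Q\,\aiminnorm{g^{(1)} - g^{(2)}}_q$, the second term on the right-hand side of \eqref{lem3.2.2.eq0}.

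For the first piece the key input is the Lipschitz-type bound \eqref{eq3.2.3}$_1$, $\aiminabs{\chi(\theta^{(1)}) - \chi(\theta^{(2)})} \leq \mathcal Q\,\aiminabs{\eta}$ almost everywhere, which reduces matters to controlling $\aiminnorm{\eta\, g^{(1)}}_q$. I would then split this product by H\"older's inequality with the conjugate pair $(2q,2q)$, getting $\aiminnorm{\eta\, g^{(1)}}_q \leq \aiminnorm{\eta}_{2q}\aiminnorm{g^{(1)}}_{2q}$, and apply the Sobolev embedding $H^1(\Omega)\hookrightarrow L^{2q}(\Omega)$ from Lemma~\ref{lem2.1.1} to each factor. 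The one point requiring attention is that the target \eqref{lem3.2.2.eq0} carries $\aiminnorm{\nabla\eta}$ rather than $\aiminnorm{\eta}_{H^1}$: here I would use that $\eta = \theta^{(1)}-\theta^{(2)} \in H_0^1(\Omega)$, since both temperatures vanish on $\partial\Omega$, so Poincar\'e's inequality from Lemma~\ref{lem2.1.1} gives $\aiminnorm{\eta}_{H^1}\leq C\aiminnorm{\nabla\eta}$, whence $\aiminnorm{\eta}_{2q}\leq \mathcal Q\,\aiminnorm{\nabla\eta}$ while $\aiminnorm{g^{(1)}}_{2q}\leq \mathcal Q\,\aiminnorm{g^{(1)}}_{H^1}$. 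Multiplying these bounds produces $\mathcal Q\,\aiminnorm{g^{(1)}}_{H^1}\aiminnorm{\nabla\eta}$, the first term on the right-hand side of \eqref{lem3.2.2.eq0}.

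There is no serious obstacle in this argument; it is essentially a routine product estimate. The only mild subtlety, and the step I would be most careful about, is the passage from the natural $\aiminnorm{\eta}_{H^1}$ to $\aiminnorm{\nabla\eta}$, which is legitimate precisely because $\eta$ inherits the homogeneous Dirichlet boundary condition so that Poincar\'e's inequality applies. I would also note that \eqref{eq3.2.3} already absorbs the dependence on the $H^2$-norms of the initial data into the constant $\mathcal Q$, so no further regularity of $\chi$ beyond \eqref{eq3.2.3} is needed.
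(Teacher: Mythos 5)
Your proposal is correct and follows essentially the same route as the paper: the same insertion of the cross term $\chi(\theta^{(2)})g^{(1)}$, the same use of \eqref{eq3.2.3} for both pieces, H\"older with exponents $(2q,2q)$, and the Sobolev embedding $H^1(\Omega)\hookrightarrow L^{2q}(\Omega)$. The only difference is that you make explicit the passage from $\aiminnorm{\eta}_{H^1}$ to $\aiminnorm{\nabla\eta}$ via Poincar\'e's inequality (valid since $\eta\in H_0^1(\Omega)$), a step the paper leaves implicit in the phrase ``together with the Sobolev embedding.''
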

\begin{proof}
	We first write
	\[
	\chi( \theta^{(1)})g^{(1)}-\chi(\theta^{(2)})g^{(2)} =
	(\chi( \theta^{(1)})-\chi(\theta^{(2)}))g^{(1)} + \chi( \theta^{(2)})(g^{(1)}-g^{(2)});
	\]
	then by \eqref{eq3.2.3}, we obtain
	\begin{equation*}
	\begin{split}
		\aiminnorm{ \chi( \theta^{(1)})g^{(1)}-\chi(\theta^{(2)})g^{(2)} }_q
		&\leq \mathcal Q\aiminnorm{\eta g^{(1)}}_q + \mathcal Q\aiminnorm{ g^{(1)} - g^{(2)} }_q\\
		&\leq \mathcal Q\aiminnorm{\eta}_{2q}\aiminnorm{ g^{(1)}}_{2q} +  \mathcal Q\aiminnorm{ g^{(1)} - g^{(2)} }_q,
	\end{split}		
	\end{equation*}
	which, together with the Sobolev embedding, implies \eqref{lem3.2.2.eq0}.
\end{proof}

We are now in position to return to \eqref{eq3.2.1}-\eqref{eq3.2.2} and we first estimate the right-hand side of \eqref{eq3.2.1} term by term.
For the first term $I_1$ in the right-hand side of \eqref{eq3.2.1}, we apply Lemma~\ref{lem3.2.1} with $\chi=\nu$, $g=\boldsymbol u^{(2)}$ and Young's inequality:
\begin{equation}\begin{split}\label{eq3.2.6}
	I_1
	&\leq \aiminnorm{{\mathrm{div}\,}\big((\nu( \theta^{(2)})-\nu(\theta^{(1)}) )\nabla \boldsymbol u^{(2)}\big)} \aiminnorm{ \Delta\boldsymbol v}\\
	&\leq C\aiminnorm{{\mathrm{div}\,}\big((\nu( \theta^{(2)})-\nu(\theta^{(1)}) )\nabla \boldsymbol u^{(2)}\big)}^2 + \frac{\underline\nu}{16}\aiminnorm{\Delta\boldsymbol v}^2\\
	&\leq \mathcal Q\aiminnorm{\nabla\eta}\aiminnorm{\Delta\eta}\aiminnorm{\Delta \boldsymbol u^{(2)}}^2 + \mathcal Q\aiminnorm{\nabla\eta}^2 \aiminnorm{\Delta \boldsymbol u^{(2)}}^2 + \frac{\underline\nu}{16}\aiminnorm{\Delta\boldsymbol v}^2\\
	&\leq \mathcal Q\aiminnorm{\nabla\eta}^2  + \frac{\underline\nu}{16}\aiminnorm{\Delta\boldsymbol v}^2 + \frac{\underline\kappa}{16}\aiminnorm{\Delta\eta}^2.
\end{split}\end{equation}
For the second term $I_2$, we use the Sobolev embedding and Young's inequality:
\begin{equation}
	\begin{split}
		I_2 &\leq \aiminnorm{\boldsymbol v}_4\aiminnorm{ \nabla\boldsymbol u^{(2)} }_4 \aiminnorm{\Delta\boldsymbol v} \leq \aiminnorm{\nabla\boldsymbol v}\aiminnorm{ \Delta\boldsymbol u^{(2)} }\aiminnorm{\Delta\boldsymbol v}\\
		&\leq \mathcal Q\aiminnorm{\nabla\boldsymbol v}^2 + \frac{\underline\nu}{16} \aiminnorm{\Delta\boldsymbol v}^2.
	\end{split}
\end{equation}
For the third and fourth terms $I_3$ and $I_4$, applying Lemma~\ref{lem2.1.3} with $f=\boldsymbol u^{(1)}$ for $I_3$, $f= \nabla\theta^{(1)}$ for $I_4$, and $g=\nabla\boldsymbol v$ implies that
\begin{equation}
	I_3\leq C\aiminnorm{\boldsymbol u^{(1)}}^2\aiminnorm{\nabla\boldsymbol u^{(1)}}^2\aiminnorm{\nabla\boldsymbol v}^2 +  \frac{\underline\nu}{16} \aiminnorm{\Delta\boldsymbol v}^2
	\leq \mathcal Q\aiminnorm{\nabla\boldsymbol v}^2 + \frac{\underline\nu}{16} \aiminnorm{\Delta\boldsymbol v}^2,
\end{equation}
\begin{equation}
	I_4 \leq C \aiminnorm{ \nabla\theta^{(1)}  }^2\aiminnorm{ \Delta\theta^{(1)} }^2 \aiminnorm{\nabla\boldsymbol v}^2  + \frac{\underline\nu}{16} \aiminnorm{\Delta\boldsymbol v}^2
	\leq \mathcal Q\aiminnorm{\nabla\boldsymbol v}^2 + \frac{\underline\nu}{16} \aiminnorm{\Delta\boldsymbol v}^2.
\end{equation}
By Poincar\'e's inequality and the Cauchy-Schwarz inequality, we estimate the last term $I_5$:
\begin{equation}
	I_5\leq \aiminnorm{\eta}\aiminnorm{\Delta\boldsymbol v}\leq C\aiminnorm{\nabla\eta}^2 + \frac{\underline\nu}{16} \aiminnorm{\Delta\boldsymbol v}^2.
\end{equation}

We then estimate the right-hand side of \eqref{eq3.2.2} term by term. Proceeding similarly as for the $I_i$ , we can obtain the following estimates for $J_i$, for all $i=1,\cdots,5$:
\begin{equation}\begin{split}
	J_1,\; J_3,\; J_4&\leq \mathcal Q \aiminnorm{\nabla \eta}^2  + \frac{\underline\nu}{16} \aiminnorm{\Delta\eta}^2,\\
	J_2,\; J_5&\leq C\aiminnorm{\nabla\boldsymbol v}^2+ \frac{\underline\nu}{16} \aiminnorm{\Delta\eta}^2.\\
\end{split}\end{equation}
We are left to estimate $J_6$. Using the Cauchy-Schwarz inequality and Young's inequality, and applying Lemma~\ref{lem3.2.2} with $q=2$, $\chi=\kappa'$, and $g^{(i)}=\theta_y^{(i)}$ for $i=1,2$ yield that
\begin{equation}\begin{split}\label{eq3.2.9}
	J_6 &\leq \aiminnorm{ \kappa'(\theta^{(1)} )\theta^{(1)}_y - \kappa'(\theta^{(2)} )\theta^{(2)}_y }\aiminnorm{\Delta\eta}\\
	&\leq \mathcal Q\big( \aiminnorm{ \theta^{(1)}_y }_{H^1} \aiminnorm{\nabla\eta} + \aiminnorm{ \theta^{(1)}_y- \theta^{(2)}_y }\big)\aiminnorm{\Delta\eta}\\
	&\leq \mathcal Q\aiminnorm{\nabla\eta}\aiminnorm{\Delta\eta}
	\leq\mathcal Q \aiminnorm{\nabla \eta}^2  + \frac{\underline\nu}{16} \aiminnorm{\Delta\eta}^2,
\end{split}\end{equation}

Now summing \eqref{eq3.2.1} and \eqref{eq3.2.2} and using all the estimates \eqref{eq3.2.6}-\eqref{eq3.2.9} for the $I_i$'s ($i=1,\cdots,5$) and the $J_j$'s ($j=1,\cdots,6$), and denoting
\begin{equation}\begin{split}\label{eq3.2.9-1}
y_1(t):=\aiminnorm{\nabla\boldsymbol v}^2 + \aiminnorm{\nabla\eta}^2,\\
\end{split}\end{equation}
we arrive at
\begin{equation}\begin{split}\label{eq3.2.10}
	\frac{\mathrm{d}}{\mathrm{d}t} y_1(t)  +\big(\underline\nu \aiminnorm{\Delta\boldsymbol v}^2 + \underline\kappa\aiminnorm{\Delta\eta}^2\big)
	\leq \mathcal Q y_1(t).
\end{split}\end{equation}
Therefore, by the usual Gronwall lemma, we find that
\begin{equation}\label{eq3.2.11}
y_1(t)\leq y_1(0)\exp\big\{\mathcal Q t\big\}.	
\end{equation}

We have thus proved that for fixed $t>0$, the mapping \eqref{eq3.1.1} is Lipschitz continuous from $H^2(\Omega)$ to $H^1(\Omega)$ with respect to the initial data $(\boldsymbol u(0),\theta(0))$.

Integrating \eqref{eq3.2.10} in time on $(0,t)$ gives
\begin{equation}\begin{split}\label{eq3.2.12}
	\int_0^t\big(\underline\nu \aiminnorm{\Delta\boldsymbol v}^2 + \underline\kappa\aiminnorm{\Delta\eta}^2\big){\mathrm{d}s}
	&\leq \int_0^t \mathcal Q y_1(s){\mathrm{d}s} + y_1(0)\\
	&\leq y_1(0)\int_0^t \mathcal Q \exp\big\{\mathcal Q s\big\} {\mathrm{d}s} + y_1(0)\\
	&= y_1(0)\exp\big\{\mathcal Q t\big\}.
\end{split}\end{equation}

\subsection{Continuity in $H^2(\Omega)$}
In order to show the continuity in $H^2(\Omega)$, we first need to show that the mapping
\begin{equation}\label{eq3.3.0}
	(\boldsymbol u_0,\theta_0)=(\boldsymbol u(0),\theta(0))\mapsto (\boldsymbol u_t(t),\theta_t(t))	
\end{equation}
is Lipschitz continuous from $H^2(\Omega)$ into $L^2(\Omega)$ for all solutions of the Boussinesq system \eqref{eq1.1.3}-\eqref{eq1.1.4}. Let us first find out the relation between $(\Delta\boldsymbol v,\Delta\eta)$ and $(\boldsymbol v_t,\eta_t)$.
By the classical elliptic regularity theory (see e.g. \cite{Eva98}), we  infer from \eqref{eq3.1.2p}$_3$ that
\begin{equation}\begin{split}\label{eq3.3.1}
	\underline\kappa\aiminnorm{\Delta\eta} \leq &C\big(\aiminnorm{\eta_t} + \aiminnorm{\nabla \theta^{(1)}\cdot\nabla\eta }  + \aiminnorm{\boldsymbol u^{(1)}\cdot\nabla\eta } + \aiminnorm{v_2}+ \aiminnorm{\boldsymbol v\cdot\nabla \theta^{(2)}}\\
	& + \aiminnorm{ {\mathrm{div}\,}( ( \kappa(\theta^{(2)}) - \kappa(\theta^{(1)}) ) \nabla\theta^{(2)})    } + \aiminnorm{ \kappa'(\theta^{(1)} )\theta^{(1)}_y - \kappa'(\theta^{(2)} )\theta^{(2)}_y }\big).
\end{split}\end{equation}
We utilize H\"older's inequality, Ladyzhenskaya's inequalities, Young's inequality, the Sobolev embedding and the bounds \eqref{eq3.1.3} to estimate the right-hand side of \eqref{eq3.3.1} term by term:
\begin{equation}\begin{split}\label{eq3.3.2}
	\aiminnorm{\nabla \theta^{(1)}\cdot\nabla\eta }
	\leq \aiminnorm{\nabla \theta^{(1)} }_4\aiminnorm{\nabla\eta }_4
	\leq C\aiminnorm{\Delta \theta^{(1)}}\aiminnorm{\nabla\eta}^{1/2}\aiminnorm{\Delta\eta}^{1/2}
	\leq \mathcal Q\aiminnorm{\nabla\eta} + \epsilon \aiminnorm{\Delta\eta};
\end{split}\end{equation}
\begin{equation}
	 \aiminnorm{\boldsymbol u^{(1)}\cdot\nabla\eta } \leq \aiminnorm{\boldsymbol u^{(1)} }_\infty \aiminnorm{\nabla\eta}
	 \leq \aiminnorm{ \boldsymbol u^{(1)}  }_{H^2}\aiminnorm{\nabla\eta}
	 \leq \mathcal Q\aiminnorm{\nabla\eta};
\end{equation}
\begin{equation}
	\aiminnorm{v_2}\leq \aiminnorm{\nabla\boldsymbol v};
\end{equation}
\begin{equation}
	\aiminnorm{\boldsymbol v\cdot\nabla \theta^{(2)}} \leq \aiminnorm{\boldsymbol v}_4\aiminnorm{ \nabla \theta^{(2)} }_4\leq \aiminnorm{\nabla\boldsymbol v}\aiminnorm{\Delta \theta^{(2)}}
	\leq \mathcal Q \aiminnorm{\nabla\boldsymbol v};
\end{equation}
applying Lemma~\ref{lem3.2.1} with $\chi=\kappa$ and $g=\theta^{(2)}$ and Young's inequality for the sixth-term in the right-hand side of \eqref{eq3.3.1} yields
\begin{equation}\begin{split}\label{eq3.3.4}
	\aiminnorm{ {\mathrm{div}\,}( ( \kappa(\theta^{(2)}) - \kappa(\theta^{(1)}) ) \nabla\theta^{(2)}) }
	&\leq \mathcal Q\aiminnorm{\nabla\eta}^{1/2}\aiminnorm{\Delta\eta}^{1/2}\aiminnorm{\Delta \theta^{(2)}} + \mathcal Q\aiminnorm{\nabla\eta} \aiminnorm{\Delta \theta^{(2)}}\\
	&\leq \mathcal Q \aiminnorm{\nabla\eta} + \epsilon \aiminnorm{\Delta\eta}.
\end{split}\end{equation}
Similar to \eqref{eq3.2.9}, applying Lemma~\ref{lem3.2.2} with $q=2$, $\chi=\kappa'$, and $g^{(i)}=\theta_y^{(i)}$ for $i=1,2$ yields
\begin{equation}\label{eq3.3.5}
	\aiminnorm{ \kappa'(\theta^{(1)} )\theta^{(1)}_y - \kappa'(\theta^{(2)} )\theta^{(2)}_y }
	\leq \mathcal Q\aiminnorm{\nabla\eta}.
\end{equation}

Combining all the estimates \eqref{eq3.3.2}-\eqref{eq3.3.5} and taking $\epsilon$ small enough, we conclude from \eqref{eq3.3.1} that
\begin{equation}\label{eq3.3.8}
	\aiminnorm{\Delta\eta} \leq \mathcal Q\big( \aiminnorm{\eta_t} + \aiminnorm{\nabla\eta} + \aiminnorm{\nabla\boldsymbol v} \big),\quad\forall\,t\geq 0.
\end{equation}

In order to find a similar relation for $\aiminnorm{\Delta\boldsymbol v}$ and $\aiminnorm{\boldsymbol v_t}$, we take the inner product of \eqref{eq3.1.2p}$_{1}$ with $\Delta\boldsymbol v$ in $L^2(\Omega)$ and note that by \eqref{eq1.1.10}, we have
\[
\int_\Omega \nabla p\cdot \Delta\boldsymbol v{\mathrm{d}x}{\mathrm{d}y} = 0,
\]
thanks to the free-boundary conditions; we arrive at
\begin{equation}\label{eq3.3.9.0}
	\int_\Omega \nu(\theta^{(1)})\aiminabs{\Delta\boldsymbol v}^2{\mathrm{d}x}{\mathrm{d}y} = \int_\Omega \boldsymbol f\cdot\Delta\boldsymbol v{\mathrm{d}x}{\mathrm{d}y},
\end{equation}
where
\[
\boldsymbol f=-\boldsymbol v_t - \nu'(\theta^{(1)})\nabla \theta^{(1)}\cdot\nabla\boldsymbol v-\boldsymbol u^{(1)}\cdot\nabla\boldsymbol v -\boldsymbol v\cdot\nabla \boldsymbol u^{(2)}  +\eta\boldsymbol e_2 - {\mathrm{div}\,}(( \nu(\theta^{(2)}) - \nu(\theta^{(1)}) ) \nabla\boldsymbol u^{(2)}).
\]
By the Cauchy-Schwarz inequality, we infer from \eqref{eq3.3.9.0} that
\begin{equation}\label{eq3.3.9}
	\underline\nu\aiminnorm{\Delta\boldsymbol v} \leq  \aiminnorm{\boldsymbol f},\quad\forall\,t\geq 0.
\end{equation}
Using Lemma~\ref{lem2.1.1} and Young's inequality, we estimate $\aiminnorm{\boldsymbol f}$ term by term:
\begin{equation}
\begin{split}
		\aiminnorm{\nu'(\theta^{(1)})\nabla \theta^{(1)}\cdot\nabla\boldsymbol v}
	&\leq C\aiminnorm{ \nabla \theta^{(1)}}_4\aiminnorm{ \nabla\boldsymbol v}_4\\
	&\leq C\aiminnorm{\Delta\theta^{(1)}} \aiminnorm{\nabla\boldsymbol v}^{1/2}  \aiminnorm{\Delta\boldsymbol v}^{1/2}
	\leq \mathcal Q \aiminnorm{\nabla\boldsymbol v}  + \frac{\underline\nu}{2} \aiminnorm{\Delta\boldsymbol v};
\end{split}
\end{equation}
\begin{equation}
	\aiminnorm{ \boldsymbol u^{(1)}\cdot\nabla\boldsymbol v} \leq \aiminnorm{ \boldsymbol u^{(1)}  }_{\infty}\aiminnorm{\nabla\boldsymbol v}
	\leq \aiminnorm{ \boldsymbol u^{(1)}  }_{H^2}\aiminnorm{\nabla\boldsymbol v}
	 \leq \mathcal Q\aiminnorm{\nabla\boldsymbol v};
\end{equation}
\begin{equation}
	\aiminnorm{ \boldsymbol v\cdot\nabla \boldsymbol u^{(2)} }\leq \aiminnorm{\boldsymbol v}_4\aiminnorm{ \nabla \boldsymbol u^{(2)} }_4\leq \aiminnorm{\nabla\boldsymbol v}\aiminnorm{\Delta \boldsymbol u^{(2)}}
	\leq \mathcal Q \aiminnorm{\nabla\boldsymbol v};
\end{equation}
\begin{equation}
	\aiminnorm{\eta} \leq \aiminnorm{\nabla\eta}.
\end{equation}
The last term in $\boldsymbol f$ is estimated similarly as for \eqref{eq3.3.4} by applying Lemma~\ref{lem3.2.1} with $\chi=\nu$ and $g=\boldsymbol u^{(2)}$; we find
\begin{equation}\label{eq3.3.13}
	\begin{split}
		\aiminnorm{ {\mathrm{div}\,}(( \nu(\theta^{(2)}) - \nu(\theta^{(1)}) ) \nabla\boldsymbol u^{(2)}) }
		\leq \mathcal Q\aiminnorm{\nabla\eta} + \frac{\underline\nu}{2}\aiminnorm{\Delta\eta}.
	\end{split}
\end{equation}
Combining all the estimates \eqref{eq3.3.9}-\eqref{eq3.3.13}, we conclude  that
\begin{equation}\label{eq3.3.14}
	\aiminnorm{\Delta\boldsymbol v} \leq \aiminnorm{\Delta\eta} +\frac{2}{\underline\nu}\aiminnorm{\boldsymbol v_t}+ \mathcal Q\big(\aiminnorm{\nabla\eta} + \aiminnorm{\nabla\boldsymbol v} \big),\quad\forall\,t\geq 0.
\end{equation}

With the relations \eqref{eq3.3.8},\,\eqref{eq3.3.14} at hand, in order to prove that the mapping \eqref{eq3.1.1} is Lipschitz continuous from $H^2(\Omega)$ into itself, we only need to prove that the mapping \eqref{eq3.3.0} is Lipschitz continuous from $H^2(\Omega)$ into $L^2(\Omega)$. Hence, taking the derivatives in \eqref{eq3.1.2} with respect to the time $t$ yields
\begin{equation}\begin{cases}\label{eq3.3.16}
	\partial_t \boldsymbol v_t - {\mathrm{div}\,}\big(\nu(\theta^{(1)})\nabla\boldsymbol v_t\big) + \boldsymbol u_t^{(1)}\cdot\nabla\boldsymbol v+ \boldsymbol u^{(1)}\cdot\nabla\boldsymbol v_t + \nabla p_t 	= {\mathrm{div}\,}\big(\nu'(\theta^{(1)})\theta^{(1)}_t\nabla\boldsymbol v\big) \\
	 \hspace{30pt}- {\mathrm{div}\,}\big(( \nu(\theta^{(2)}) - \nu(\theta^{(1)}) ) \nabla\boldsymbol u_t^{(2)}\big) -  {\mathrm{div}\,}\big(( \nu'(\theta^{(2)})\theta^{(2)}_t - \nu'(\theta^{(1)})\theta^{(1)}_t ) \nabla\boldsymbol u^{(2)}\big)\\
	 \hspace{100pt}-\boldsymbol v_t\cdot\nabla \boldsymbol u^{(2)}- \boldsymbol v\cdot\nabla \boldsymbol u_t^{(2)}  +\eta_t\boldsymbol e_2,\\
	 {\mathrm{div}\,} \boldsymbol v_t =0,\\
	 \partial_t\eta_t  -{\mathrm{div}\,}\big(\kappa(\theta^{(1)})\nabla\eta_t\big) + \boldsymbol u_t^{(1)}\cdot\nabla\eta + \boldsymbol u^{(1)}\cdot\nabla\eta_t
	 = {\mathrm{div}\,}\big(\kappa'(\theta^{(1)})\theta^{(1)}_t\nabla\eta\big)\\
	 \hspace{30pt}-{\mathrm{div}\,}\big( ( \kappa(\theta^{(2)}) - \kappa(\theta^{(1)}) ) \nabla\theta_t^{(2)}\big)
	 -{\mathrm{div}\,}\big( ( \kappa'(\theta^{(2)})\theta^{(2)}_t - \kappa'(\theta^{(1)})\theta^{(1)}_t ) \nabla\theta^{(2)}\big) \\
	 \hspace{100pt}- \boldsymbol v_t\cdot\nabla\theta^{(2)} - \boldsymbol v\cdot\nabla\theta^{(2)}_t + v_{2,t}\\
	 \hspace{30pt} -\kappa'(\theta^{(1)} )\theta^{(1)}_{y,t} + \kappa'(\theta^{(2)} )\theta^{(2)}_{y,t} -\kappa''(\theta^{(1)} )\theta^{(1)}_t\theta^{(1)}_y + \kappa''(\theta^{(2)} )\theta^{(2)}_t\theta^{(2)}_y.
\end{cases}\end{equation}

Taking the inner product of  \eqref{eq3.3.16} with $(\boldsymbol v_t,\eta_t)$ in $L^2(\Omega)$ and integrating by parts yields
\begin{equation}\label{eq3.3.17}
	\begin{split}
		\frac12\frac{\mathrm{d}}{\mathrm{d}t}\aiminnorm{\boldsymbol v_t}^2 + \int_\Omega\nu(\theta^{(1)})\aiminabs{\nabla\boldsymbol v_t}^2{\mathrm{d}x}{\mathrm{d}y}
		=-\int_\Omega \boldsymbol u_t^{(1)}\cdot\nabla\boldsymbol v\cdot\boldsymbol v_t{\mathrm{d}x}{\mathrm{d}y} - \int_\Omega\boldsymbol u^{(1)}\cdot\nabla\boldsymbol v_t\cdot\boldsymbol v_t{\mathrm{d}x}{\mathrm{d}y}\\
		-\int_\Omega \nu'(\theta^{(1)})\theta^{(1)}_t\nabla\boldsymbol v\cdot\nabla\boldsymbol v_t{\mathrm{d}x}{\mathrm{d}y}
		 +\int_\Omega ( \nu(\theta^{(2)}) - \nu(\theta^{(1)}) ) \nabla\boldsymbol u_t^{(2)} \cdot\nabla\boldsymbol v_t{\mathrm{d}x}{\mathrm{d}y}\\
		 +\int_\Omega ( \nu'(\theta^{(2)})\theta^{(2)}_t - \nu'(\theta^{(1)})\theta^{(1)}_t ) \nabla\boldsymbol u^{(2)} \cdot\nabla\boldsymbol v_t{\mathrm{d}x}{\mathrm{d}y}
		 -\int_\Omega \boldsymbol v_t\cdot\nabla \boldsymbol u^{(2)}\cdot\boldsymbol v_t {\mathrm{d}x}{\mathrm{d}y}\\
		 -\int_\Omega \boldsymbol v\cdot\nabla \boldsymbol u_t^{(2)}\cdot\boldsymbol v_t {\mathrm{d}x}{\mathrm{d}y} + \int_\Omega \eta_t\boldsymbol e_2\cdot\boldsymbol v_t{\mathrm{d}x}{\mathrm{d}y}\\
		 =:I_1 + \cdots + I_8,\hspace{150pt}
	\end{split}
\end{equation}
and
\begin{equation}\label{eq3.3.17p}
	\begin{split}
		\frac12\frac{\mathrm{d}}{\mathrm{d}t}\aiminnorm{\eta_t}^2 + \int_\Omega\kappa(\theta^{(1)})\aiminabs{\nabla\eta_t}^2{\mathrm{d}x}{\mathrm{d}y}
		=-\int_\Omega \boldsymbol u_t^{(1)}\cdot\nabla\eta\cdot\eta_t{\mathrm{d}x}{\mathrm{d}y} - \int_\Omega\boldsymbol u^{(1)}\cdot\nabla\eta_t\cdot\eta_t{\mathrm{d}x}{\mathrm{d}y}\\
		-\int_\Omega \kappa'(\theta^{(1)})\theta^{(1)}_t\nabla\eta\cdot\nabla\eta_t{\mathrm{d}x}{\mathrm{d}y}
		 +\int_\Omega ( \kappa(\theta^{(2)}) - \kappa(\theta^{(1)}) ) \nabla\theta_t^{(2)} \cdot\nabla\eta_t{\mathrm{d}x}{\mathrm{d}y}\\
		 +\int_\Omega ( \kappa'(\theta^{(2)})\theta^{(2)}_t - \kappa'(\theta^{(1)})\theta^{(1)}_t ) \nabla\theta^{(2)} \cdot\nabla\eta_t{\mathrm{d}x}{\mathrm{d}y}
		 -\int_\Omega \boldsymbol v_t\cdot\nabla \theta^{(2)}\cdot\eta_t {\mathrm{d}x}{\mathrm{d}y}\\
		 -\int_\Omega \boldsymbol v\cdot\nabla \theta_t^{(2)}\cdot\eta_t {\mathrm{d}x}{\mathrm{d}y} + \int_\Omega v_{2,t}\cdot\eta_t{\mathrm{d}x}{\mathrm{d}y}\\
		 +\int_\Omega \big(-\kappa'(\theta^{(1)} )\theta^{(1)}_{y,t} + \kappa'(\theta^{(2)} )\theta^{(2)}_{y,t}\big)\cdot\eta_t{\mathrm{d}x}{\mathrm{d}y}\\
		 +\int_\Omega \big( -\kappa''(\theta^{(1)} )\theta^{(1)}_t\theta^{(1)}_y + \kappa''(\theta^{(2)} )\theta^{(2)}_t\theta^{(2)}_y \big)\cdot\eta_t{\mathrm{d}x}{\mathrm{d}y}\\
		 =:J_1 + \cdots + J_{10}.\hspace{150pt}
	\end{split}
\end{equation}
Before we enter into the estimate of \eqref{eq3.3.17}-\eqref{eq3.3.17p}, let us first denote
\begin{equation}\label{eq3.3.18}
	g_2(t):=\sum_{i=1,2}\bigg( \aiminnorm{ \boldsymbol u^{(i)} }_{H^3}^2+\aiminnorm{ \theta^{(i)} }_{H^3}^2 + \aiminnorm{\boldsymbol u_t^{(i)}}_{H^1}^2 + \aiminnorm{\theta_t^{(i)}}_{H^1}^2 \bigg);
\end{equation}
note that for the strong solutions $(\boldsymbol u^{(i)}, \theta^{(i)})$, $i=1,2$, we have
\begin{equation}\label{eq3.3.19}
\int_0^t g_2(s){\mathrm{d}s}<+\infty,\quad\forall\,t\geq0.
\end{equation}

We now estimate the right-hand side of \eqref{eq3.3.17} term by term.
For the first term $I_1$, we use H\"older's inequality, the Sobolev embedding, and Young's inequality:
\begin{equation*}
\begin{split}
	I_1 &\leq \aiminnorm{ \boldsymbol u_t^{(1)} }_4\aiminnorm{\nabla\boldsymbol v}\aiminnorm{\boldsymbol v_t}_4
	\leq C\aiminnorm{\nabla\boldsymbol v}\aiminnorm{ \nabla\boldsymbol u_t^{(1)} }\aiminnorm{\nabla\boldsymbol v_t}\\
	&\leq C\aiminnorm{\nabla\boldsymbol v}^2\aiminnorm{ \nabla\boldsymbol u_t^{(1)} }^2 + \frac{\underline\nu}{32}\aiminnorm{\nabla\boldsymbol v_t}^2\\
	&\leq Cg_2(t)\aiminnorm{\nabla\boldsymbol v}^2+ \frac{\underline\nu}{32}\aiminnorm{\nabla\boldsymbol v_t}^2.
\end{split}
\end{equation*}
For the second term $I_2$, applying Lemma~\ref{lem2.1.3} with $f=\boldsymbol u^{(1)}$ and $g=\boldsymbol v_t$ gives
\begin{equation*}\begin{split}
	I_2 &\leq C\aiminnorm{ \boldsymbol u^{(1)} }^2\aiminnorm{ \nabla\boldsymbol u^{(1)} }^2 \aiminnorm{\boldsymbol v_t}^2 +   \frac{\underline\nu}{32}\aiminnorm{\nabla\boldsymbol v_t}^2\\
	&\leq \mathcal Q\aiminnorm{\boldsymbol v_t}^2 +  \frac{\underline\nu}{32}\aiminnorm{\nabla\boldsymbol v_t}^2.
\end{split}\end{equation*}
By H\"older's inequality, Ladyzhenskaya's inequality, and Young's inequality, the third term $I_3$ is estimated as follows:
\begin{equation*}\begin{split}
	I_3
	&\leq C\aiminnorm{  \theta^{(1)}_t }_4 \aiminnorm{\nabla\boldsymbol v}_4\aiminnorm{\nabla\boldsymbol v_t}
	\leq C\aiminnorm{ \theta^{(1)}_t }^{1/2}\aiminnorm{ \nabla\theta^{(1)}_t }^{1/2} \aiminnorm{\nabla\boldsymbol v}^{1/2} \aiminnorm{\Delta\boldsymbol v}^{1/2}\aiminnorm{\nabla\boldsymbol v_t} \\
	&\leq C\aiminnorm{ \theta^{(1)}_t }^2 \aiminnorm{\Delta\boldsymbol v}^2 + \aiminnorm{ \nabla\theta^{(1)}_t }^{2} \aiminnorm{\nabla\boldsymbol v}^2 +   \frac{\underline\nu}{32}\aiminnorm{\nabla\boldsymbol v_t}^2\\
	&\leq \mathcal Q \aiminnorm{\Delta\boldsymbol v}^2 + g_2(t)\aiminnorm{\nabla\boldsymbol v}^2  +   \frac{\underline\nu}{32}\aiminnorm{\nabla\boldsymbol v_t}^2.
\end{split}\end{equation*}
For the fourth term $I_4$ (the most problematic term), we use \eqref{eq3.2.3} and utilize Agmon's and Young's inequalities; we estimate $I_4$ as follows:
\begin{equation*}\begin{split}
	I_4
	&\leq C \aiminnorm{\eta}_\infty \aiminnorm{\nabla\boldsymbol u_t^{(2)} }\aiminnorm{ \nabla\boldsymbol v_t }
	\leq C\aiminnorm{\nabla\eta}^{1/2}\aiminnorm{\Delta\eta}^{1/2}  \aiminnorm{\nabla\boldsymbol u_t^{(2)} } \aiminnorm{ \nabla\boldsymbol v_t }\\
	&\leq C\aiminnorm{\nabla\eta}\aiminnorm{\Delta\eta} \aiminnorm{\nabla\boldsymbol u_t^{(2)} }^2 +  \frac{\underline\nu}{32}\aiminnorm{\nabla\boldsymbol v_t}^2\\
	&\leq Cg_2(t) \aiminnorm{\nabla\eta}\aiminnorm{\Delta\eta} +  \frac{\underline\nu}{32}\aiminnorm{\nabla\boldsymbol v_t}^2,\\
\end{split}\end{equation*}
which, together with the relation \eqref{eq3.3.8} between $\Delta\eta$ and $\eta_t$, implies that
\begin{equation*}\begin{split}
	I_4 &\leq \mathcal Q g_2(t) \aiminnorm{\nabla\eta} \big( \aiminnorm{\eta_t} + \aiminnorm{\nabla\eta} + \aiminnorm{\nabla\boldsymbol v} \big)+  \frac{\underline\nu}{32}\aiminnorm{\nabla\boldsymbol v_t}^2\\
	&\leq \mathcal Q g_2(t)\big( \aiminnorm{\nabla\eta}^2 + \aiminnorm{\eta_t}^2 + \aiminnorm{\nabla\boldsymbol v}^2 \big) +   \frac{\underline\nu}{32}\aiminnorm{\nabla\boldsymbol v_t}^2.\\
\end{split}\end{equation*}
For the fifth term $I_5$, using H\"older's inequality and Lemma~\ref{lem3.2.2} with $q=4$, $\chi=\nu'$, and $g^{(i)}=\theta_t^{(i)}$ for $i=1,2$ yield
\begin{equation*}\begin{split}
	I_5&\leq \aiminnorm{\nu'(\theta^{(2)})\theta^{(2)}_t - \nu'(\theta^{(1)})\theta^{(1)}_t}_4\aiminnorm{\nabla\boldsymbol u^{(2)}}_4 \aiminnorm{\nabla\boldsymbol v_t}\\
	&\leq \mathcal Q\big(\aiminnorm{\theta_t^{(1)} }\aiminnorm{\nabla\eta} + \aiminnorm{\theta_t^{(1)}-\theta_t^{(2)} }_4\big) \aiminnorm{\Delta\boldsymbol u^{(2)}} \aiminnorm{\nabla\boldsymbol v_t}\\
	&\leq \mathcal Q \aiminnorm{\nabla\eta}\aiminnorm{\nabla\boldsymbol v_t} + \mathcal Q \aiminnorm{\eta_t}_4\aiminnorm{\nabla\boldsymbol v_t},\\
\end{split}\end{equation*}
which, by exploiting Ladyzhenskaya's inequalities and Young's inequality, implies that
\begin{equation*}\begin{split}
	I_5&\leq\mathcal Q\aiminnorm{\nabla\eta}\aiminnorm{\nabla\boldsymbol v_t}+\mathcal Q\aiminnorm{\eta_t}^{1/2}\aiminnorm{\nabla\eta_t}^{1/2}\aiminnorm{\nabla\boldsymbol v_t}\\
	&\leq \mathcal Q(\aiminnorm{\nabla\eta}^2 + \aiminnorm{\eta_t}^2)+\frac{\underline\nu}{32}\aiminnorm{\nabla\boldsymbol v_t}^2 + \frac{\underline\kappa}{32}\aiminnorm{\nabla\eta_t}^2.
\end{split}\end{equation*}
For the sixth term $I_6$, by Ladyzhenskaya's inequalities and Young's inequality, we have
\begin{equation*}\begin{split}
	I_6 \leq \aiminnorm{ \nabla \boldsymbol u^{(2)} }\aiminnorm{\boldsymbol v_t}_4^2 \leq \mathcal Q \aiminnorm{\boldsymbol v_t}\aiminnorm{\nabla\boldsymbol v_t}
	\leq \mathcal Q \aiminnorm{\boldsymbol v_t}^2 +  \frac{\underline\nu}{32}\aiminnorm{\nabla\boldsymbol v_t}^2.
\end{split}\end{equation*}
Similarly as for the first term, we estimate $I_7$ as follows:
\begin{equation*}\begin{split}
	I_7 &\leq \aiminnorm{\boldsymbol v}_4 \aiminnorm{ \nabla\boldsymbol u_t^{(2)}} \aiminnorm{\boldsymbol v_t}_4
	\leq \aiminnorm{\nabla\boldsymbol v}  \aiminnorm{ \nabla\boldsymbol u_t^{(2)}} \aiminnorm{\nabla\boldsymbol v_t}\\
	&\leq  C\aiminnorm{ \nabla\boldsymbol u_t^{(2)}}^2 \aiminnorm{\nabla\boldsymbol v}^2 + \frac{\underline\nu}{32}\aiminnorm{\nabla\boldsymbol v_t}^2\\
	&\leq Cg_2(t) \aiminnorm{\nabla\boldsymbol v}^2 + \frac{\underline\nu}{32}\aiminnorm{\nabla\boldsymbol v_t}^2.
\end{split}\end{equation*}
The last term $I_8$ is easy and we have
\begin{equation*}\begin{split}
	I_8 \leq \aiminnorm{\eta_t}\aiminnorm{\boldsymbol v_t}\leq \frac12\aiminnorm{\eta_t}^2 + \frac12\aiminnorm{\boldsymbol v_t}^2.
\end{split}\end{equation*}
Combining all the estimates for $I_1,\cdots, I_8$, we conclude from \eqref{eq3.3.17} that
\begin{equation}\begin{split}\label{eq3.3.31}
	\frac12\frac{\mathrm{d}}{\mathrm{d}t}\aiminnorm{\boldsymbol v_t}^2 + \underline\nu\aiminnorm{\nabla\boldsymbol v_t}^2
	\leq &\mathcal Qg_2(t)\big( \aiminnorm{\boldsymbol v_t}^2 +  \aiminnorm{\eta_t}^2\big)
	+\mathcal Q   \aiminnorm{\Delta\boldsymbol v}^2\\
	&+\mathcal Q g_2(t)\big(\aiminnorm{\nabla\eta}^2  + \aiminnorm{\nabla\boldsymbol v}^2  \big)
	 + \frac{7\underline\nu}{32}\aiminnorm{\nabla\boldsymbol v_t}^2+\frac{\underline\kappa}{32}\aiminnorm{\nabla\eta_t}^2.
\end{split}\end{equation}

We then estimate the right-hand side of \eqref{eq3.3.17p} term by term. Proceeding similarly as for the $I_i$ , we can obtain the following estimates for the $J_i$, for all $i=1,\cdots,8$:
\begin{equation*}\begin{split}
	J_1, J_7 &\leq C g_2(t)\aiminnorm{\nabla\eta}^2 + \frac{\underline\kappa}{32}\aiminnorm{\nabla\eta_t}^2,\\
	J_2, J_6 &\leq \mathcal Q \aiminnorm{\eta_t}^2 + \frac{\underline\kappa}{32}\aiminnorm{\nabla\eta_t}^2,\\
	J_3 &\leq \mathcal Q \aiminnorm{\Delta\eta}^2 + g_2(t)\aiminnorm{\nabla\eta}^2 + \frac{\underline\kappa}{32}\aiminnorm{\nabla\eta_t}^2,\\
	J_4 &\leq  \mathcal Q g_2(t)\big( \aiminnorm{\nabla\eta}^2 + \aiminnorm{\eta_t}^2 + \aiminnorm{\nabla\boldsymbol v}^2 \big) +   \frac{\underline\nu}{32}\aiminnorm{\nabla\eta_t}^2,\\
	J_5 &\leq \mathcal Q(\aiminnorm{\nabla\eta}^2 + \aiminnorm{\eta_t}^2)+ \frac{\underline\kappa}{32}\aiminnorm{\nabla\eta_t}^2,\\
	J_8 &\leq \frac12\aiminnorm{\eta_t}^2 + \frac12\aiminnorm{\boldsymbol v_t}^2.
\end{split}\end{equation*}
We are left to estimate the last two terms $J_9$ and $J_{10}$.
Similarly to the proof of Lemmas~\ref{lem3.2.1}-\ref{lem3.2.2}, we need to add additional terms in $J_9$ and $J_{10}$ in order to extract $\eta$ which is the difference between $\theta^{(1)}$ and $\theta^{(2)}$. For $J_9$, by \eqref{eq3.2.3}, H\"older's inequality, and the Sobolev embedding, we find that
\begin{equation*}\begin{split}
	J_9 &\leq \mathcal Q\aiminnorm{\eta \theta_{y,t}^{(1)}\eta_t } + \mathcal Q\aiminnorm{ \eta_{y,t}\eta_t}\\
	&\leq \mathcal Q \aiminnorm{\eta}_4 \aiminnorm{ \theta_{y,t}^{(1)} } \aiminnorm{\eta_t}_4 + \mathcal Q\aiminnorm{\nabla\eta_t}\aiminnorm{\eta_t}\\
	&\leq \mathcal Q g_2(t)^{1/2} \aiminnorm{\nabla\eta}\aiminnorm{\nabla\eta_t} +\mathcal Q\aiminnorm{\nabla\eta_t}\aiminnorm{\eta_t}\\
	&\leq \mathcal Q g_2(t)\aiminnorm{\nabla\eta}^2 + \mathcal Q\aiminnorm{\eta_t}^2  + \frac{\underline\kappa}{32}\aiminnorm{\nabla\eta_t}^2.\\
\end{split}\end{equation*}
For $J_{10}$,  by \eqref{eq3.2.3}, H\"older's inequality, the Sobolev embedding, and Ladyzhenskaya's inequality, we find that
\begin{equation*}\begin{split}
	J_{10} &\leq \mathcal Q \aiminnorm{ \eta \theta_t^{(1)}\theta_y^{(1)} \eta_t} + \mathcal Q \aiminnorm{ \eta_t \theta_y^{(1)} \eta_t} + \mathcal Q \aiminnorm{ \theta_t^{(2)} \eta_y \eta_t}\\
	&\leq \mathcal Q \aiminnorm{\eta}_4\aiminnorm{ \theta_t^{(1)} }_4\aiminnorm{\theta_y^{(1)} }_4\aiminnorm{\eta_t}_4
	+\mathcal Q \aiminnorm{  \theta_y^{(1)} } \aiminnorm{ \eta_t }_4^2 + \mathcal Q \aiminnorm{\theta_t^{(2)} }_4\aiminnorm{\eta_y}\aiminnorm{\eta_t}_4\\
	&\leq \mathcal Q \aiminnorm{\nabla\eta}\aiminnorm{\nabla \theta_t^{(1)} }\aiminnorm{\nabla\theta_y^{(1)} }\aiminnorm{\nabla\eta_t} + \mathcal Q\aiminnorm{\eta_t}\aiminnorm{\nabla\eta_t} + \mathcal Q\aiminnorm{\nabla\theta_t^{(2)} }\aiminnorm{\eta_y}\aiminnorm{\nabla\eta_t}\\
	&\leq \mathcal Q g_2(t)^{1/2} \aiminnorm{\nabla\eta}\aiminnorm{\nabla\eta_t} + \mathcal Q\aiminnorm{\eta_t}\aiminnorm{\nabla\eta_t}  \\
	&\leq \mathcal Q g_2(t)\aiminnorm{\nabla\eta}^2 + \mathcal Q\aiminnorm{\eta_t}^2 + \frac{\underline\kappa}{32}\aiminnorm{\nabla\eta_t}^2.\\
\end{split}\end{equation*}
Combining all the estimates for $J_1,\cdots, J_{10}$, we arrive at
\begin{equation}\begin{split}\label{eq3.3.32}
	\frac12\frac{\mathrm{d}}{\mathrm{d}t}\aiminnorm{\eta_t}^2 + \underline\kappa\aiminnorm{\nabla\eta_t}^2
	\leq \mathcal Qg_2(t)\big( \aiminnorm{\boldsymbol v_t}^2 +  \aiminnorm{\eta_t}^2\big)
	+\mathcal Q \aiminnorm{\Delta\eta}^2 \\
	+\mathcal Q g_2(t)\big(\aiminnorm{\nabla\eta}^2  + \aiminnorm{\nabla\boldsymbol v}^2  \big)
	 + \frac{9\underline\kappa}{32}\aiminnorm{\nabla\eta_t}^2.
\end{split}\end{equation}

Summing \eqref{eq3.3.31} and \eqref{eq3.3.32} and denoting
\begin{equation}\begin{split}
	y_2(t):=\aiminnorm{\boldsymbol v_t}^2 + \aiminnorm{\eta_t}^2,
\end{split}\end{equation}
and dispensing with the terms involving $(\nabla\boldsymbol v_t,\nabla\eta_t)$, we arrive at
\begin{equation}\begin{split}\label{eq3.3.34}
	\frac{\mathrm{d}}{\mathrm{d}t} y_2(t) \leq \mathcal Qg_2(t)y_2(t) +  \mathcal Q\big( \aiminnorm{\Delta\eta}^2  + \aiminnorm{\Delta\boldsymbol v}^2 \big)
	+\mathcal Q g_2(t) y_1(t),
\end{split}\end{equation}
where $y_1(t)$ is defined in \eqref{eq3.2.9-1}. Now, applying the usual Gronwall lemma to \eqref{eq3.3.34}, we obtain
\begin{equation*}\begin{split}
	y_2(t) \leq y_2(0) \exp\bigg\{\mathcal Q \int_0^t g_2(s){\mathrm{d}s}\bigg\} + \mathcal Q\int_0^t\big( \aiminnorm{\Delta\eta}^2  + \aiminnorm{\Delta\boldsymbol v}^2 \big){\mathrm{d}s} + \mathcal Q\int_0^t g_2(s)y_1(s){\mathrm{d}s},
\end{split}\end{equation*}
which, by combining the estimates \eqref{eq3.2.11}-\eqref{eq3.2.12}, implies that
\begin{equation}\begin{split}\label{eq3.3.36}
	y_2(t) &\leq y_2(0) \exp\bigg\{\mathcal Q \int_0^t g_2(s){\mathrm{d}s}\bigg\} + \mathcal Q y_1(0) \exp\big\{\mathcal Q t \} \big( 1+\int_0^t g_2(s) {\mathrm{d}s}\big).\\
\end{split}\end{equation}
We can conclude from \eqref{eq3.3.36} that the mapping \eqref{eq3.3.0} is continuous from $H^2(\Omega)$ to $L^2(\Omega)$ as long as we have the following estimate for $y_2(0)$:
\begin{equation}\label{eq3.3.37}
	y_2(0) \leq \mathcal Q \big( \aiminnorm{ \boldsymbol v(0) }_{H^2}^2 + \aiminnorm{\eta(0)}_{H^2}^2 \big).
\end{equation}
To show \eqref{eq3.3.37}, we take the inner product of \eqref{eq3.1.2p} when $t=0$ with $(\boldsymbol v_t(0), \theta_t(0))$ and then employ Lemma~\ref{lem2.1.1} and the uniform estimates \eqref{eq3.1.3}; we are able to prove \eqref{eq3.3.37} and we omit the details here.

Now, going back to \eqref{eq3.3.8} and \eqref{eq3.3.14} and using \eqref{eq3.2.11},\,\eqref{eq3.3.36}, we can conclude the following result.
\begin{proposition}\label{prop3.2.1}
	Let $(\boldsymbol u(t),\theta(t))$ be the strong solutions of the Boussinesq system \eqref{eq1.1.3}-\eqref{eq1.1.4} with initial data $(\boldsymbol u(0),\theta(0))$ given by Theorem~\ref{thm1.2}. Then for  any fixed $t>0$, the mapping
		$$(\boldsymbol u(0),\theta(0))\mapsto (\boldsymbol u(t),\theta(t))$$
	is Lipschitz continuous from $H^2(\Omega)$ to $H^2(\Omega)$.
\end{proposition}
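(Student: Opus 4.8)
The plan is to assemble three ingredients already in place: the $H^1$-continuity estimate \eqref{eq3.2.11} for $y_1(t)=\aiminnorm{\nabla\boldsymbol v}^2+\aiminnorm{\nabla\eta}^2$, the $L^2$-bound \eqref{eq3.3.36} for the time derivatives $y_2(t)=\aiminnorm{\boldsymbol v_t}^2+\aiminnorm{\eta_t}^2$, and the elliptic relations \eqref{eq3.3.8} and \eqref{eq3.3.14} that express $\aiminnorm{\Delta\eta}$ and $\aiminnorm{\Delta\boldsymbol v}$ in terms of $(\boldsymbol v_t,\eta_t)$ and of first-order quantities. Because $\aiminnorm{\Delta\boldsymbol v}\backsimeq\aiminnorm{\boldsymbol v}_{H^2}$ and $\aiminnorm{\Delta\eta}\backsimeq\aiminnorm{\eta}_{H^2}$ by the norm equivalences recalled at the start of Section~\ref{sec2}, it suffices to control $y_1(t)$ and $y_2(t)$ by the $H^2$-norms of the initial perturbations $\boldsymbol v(0)$ and $\eta(0)$; this will deliver $\aiminnorm{\boldsymbol v(t)}_{H^2}+\aiminnorm{\eta(t)}_{H^2}\leq\mathcal Q(\aiminnorm{\boldsymbol v(0)}_{H^2}+\aiminnorm{\eta(0)}_{H^2})$ for each fixed $t$, which is precisely the claimed Lipschitz continuity.

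First I would note that $y_1(0)=\aiminnorm{\nabla\boldsymbol v(0)}^2+\aiminnorm{\nabla\eta(0)}^2\leq\aiminnorm{\boldsymbol v(0)}_{H^2}^2+\aiminnorm{\eta(0)}_{H^2}^2$ trivially, so that \eqref{eq3.2.11} already bounds $y_1(t)$ for fixed $t$ by the desired quantity. The step that requires genuine work is the initial bound \eqref{eq3.3.37} for $y_2(0)$. To obtain it I would take the $L^2$-inner product of \eqref{eq3.1.2p}$_{1,3}$ evaluated at $t=0$ with $(\boldsymbol v_t(0),\eta_t(0))$: the pressure term drops upon integration by parts since $\boldsymbol v_t$ is divergence-free with $v_{2,t}|_{y=0,1}=0$, while the viscous and diffusive terms are kept in non-integrated form, so that $-\int_\Omega{\mathrm{div}\,}(\nu(\theta^{(1)})\nabla\boldsymbol v)\cdot\boldsymbol v_t$ is dominated by $\mathcal Q\aiminnorm{\boldsymbol v}_{H^2}\aiminnorm{\boldsymbol v_t}$ after expanding the divergence and using Lemma~\ref{lem2.1.1} together with the uniform bounds \eqref{eq3.1.3}. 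The genuinely coupled terms, namely ${\mathrm{div}\,}\big((\nu(\theta^{(2)})-\nu(\theta^{(1)}))\nabla\boldsymbol u^{(2)}\big)$ and its $\kappa$-analogue, are handled by Lemma~\ref{lem3.2.1}, which produces factors $\aiminnorm{\nabla\eta}$, $\aiminnorm{\Delta\eta}$ and $\aiminnorm{\Delta\boldsymbol u^{(2)}}$; all such factors are linear in $\aiminnorm{\eta(0)}_{H^2}$ and $\aiminnorm{\boldsymbol v(0)}_{H^2}$. Absorbing the quadratic $(\boldsymbol v_t(0),\eta_t(0))$ contributions into the left-hand side then yields \eqref{eq3.3.37}.

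With \eqref{eq3.3.37} and the finiteness \eqref{eq3.3.19} of $\int_0^t g_2(s)\,\mathrm{d}s$ in hand, \eqref{eq3.3.36} gives, for each fixed $t$, $y_2(t)\leq\mathcal Q\big(\aiminnorm{\boldsymbol v(0)}_{H^2}^2+\aiminnorm{\eta(0)}_{H^2}^2\big)$, the constant $\mathcal Q$ now also absorbing the finite factors $\exp\{\mathcal Q\int_0^t g_2\}$ and $1+\int_0^t g_2$. Feeding the bounds on $y_1(t)$ and $y_2(t)$ into \eqref{eq3.3.8} yields $\aiminnorm{\Delta\eta(t)}\leq\mathcal Q\big(y_2(t)^{1/2}+y_1(t)^{1/2}\big)\leq\mathcal Q\big(\aiminnorm{\boldsymbol v(0)}_{H^2}+\aiminnorm{\eta(0)}_{H^2}\big)$, and then \eqref{eq3.3.14} gives the identical control on $\aiminnorm{\Delta\boldsymbol v(t)}$. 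Passing from $\aiminnorm{\Delta\cdot}$ to $\aiminnorm{\cdot}_{H^2}$ through the norm equivalences finishes the argument.

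The main obstacle I expect is the verification of \eqref{eq3.3.37}: it is the single point at which the $H^2$-regularity of the initial data is actually consumed, and the delicate requirement is to keep every estimate \emph{linear} in $\aiminnorm{\boldsymbol v(0)}_{H^2}$ and $\aiminnorm{\eta(0)}_{H^2}$ — in particular to route the nonlinear differences $\nu(\theta^{(2)})-\nu(\theta^{(1)})$ and $\kappa(\theta^{(2)})-\kappa(\theta^{(1)})$ through the Lipschitz-type inequalities \eqref{eq3.2.3} — so that the resulting dependence on the initial data is genuinely Lipschitz rather than merely continuous. Everything downstream of \eqref{eq3.3.37} is a routine assembly of inequalities already established.
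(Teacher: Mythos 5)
Your proposal is correct and follows essentially the same route as the paper: it assembles the $H^1$-contraction bound \eqref{eq3.2.11}, the time-derivative bound \eqref{eq3.3.36}, and the elliptic relations \eqref{eq3.3.8}, \eqref{eq3.3.14}, with the initial bound \eqref{eq3.3.37} obtained exactly as the paper indicates (testing \eqref{eq3.1.2p} at $t=0$ against $(\boldsymbol v_t(0),\eta_t(0))$, with the pressure term vanishing and the difference terms routed through Lemma~\ref{lem3.2.1} and \eqref{eq3.2.3}). Your sketch of \eqref{eq3.3.37}, which the paper omits, is a faithful and adequate filling-in of that step.
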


\section{The global attractor}\label{sec4}
In this section, we are going to conclude the proof of the main Theorem~\ref{thm1.3} and in particular show the existence of the global attractor. In order to achieve this goal, we utilize the following abstract result from \cite[Chapter I]{Tem97} about semigroups and the existence of their global attractors. One can also refer to \cite{Hal88, Lad91, BV92, CV02} for additional developments concerning the theory of global attractors.
\begin{theorem}\label{thm4.2}
Suppose that $X$ is a metric space with metric $d(\cdot,\cdot)$ and the semigroup $\aiminset{S(t)}_{t\geq 0}$ is a family of operators from $X$ into $X$ itself such that:
\begin{enumerate}[$\quad(i)$]
\item for each fixed $t>0$, $S(t)$ is continuous from $X$ into itself;
\item for some $t_0>0$, $S(t_0)$ is compact from $X$ into itself;
\item there exists a subset $B_0$ of $X$ which is bounded, and a subset $U$ of $X$ which is open, such that $B_0\subset U\subset X$, and $B_0$ is absorbing in $U$ for the semigroup, that is for any bounded subset $B\subset U$, there exists $t_0=t_0(B)>0$ such that
\[
S(t)B\subset B_0,\quad\quad \forall\,t>t_0(B).
\]
($B_0$ is also called the absorbing set of the semigroup in $U$).
\end{enumerate}
Then $\mathcal A:=\omega(B_0)$, the $\omega$-limit set of $B_0$, is a compact attractor which attracts all the bounded sets of $U$, that is for any bounded set  $B\subset U$,
\[
	\lim_{t\rightarrow+\infty}\text{dist}(S(t)B,\mathcal A) = 0,
\]
where $\text{dist}(B_1,B_2):=\sup_{x\in B_1}\inf_{y\in B_2}d(x,y)$ is the non-symmetric Hausdorff distance between subsets of $X$. Furthermore, the set $\mathcal A$ is the maximal bounded attractor in $U$ for the inclusion relation.

Suppose in addition that $X$ is a Banach space, $U$ is convex and
\begin{enumerate}[$\quad(i)$]
\setcounter{enumi}{3}
\item for any $x\in X$, $t\mapsto S(t)x$ from $\mathbb R_+$ to $X$ is continuous.
\end{enumerate}
Then $\mathcal A=\omega(B_0)$ is also connected.

If $U=X$, $\mathcal A$ is the global attractor of the semigroup $\aiminset{S(t)}_{t\geq 0}$ in $X$.
\end{theorem}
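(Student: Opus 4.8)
The plan is to follow the classical argument (as in \cite[Chapter~I]{Tem97}), built entirely around the $\omega$-limit set $\mathcal A=\omega(B_0)=\bigcap_{s\ge0}\overline{\bigcup_{t\ge s}S(t)B_0}$, equivalently the set of all limits $y=\lim_k S(t_k)x_k$ with $t_k\to\infty$ and $x_k\in B_0$. First I would record that the absorbing property $(iii)$ applied to $B=B_0$ yields some $t_1>0$ with $S(t)B_0\subset B_0$ for all $t\ge t_1$. The compactness hypothesis $(ii)$ is the engine: for $t\ge t_1+t_0$ one writes $S(t)B_0=S(t_0)\bigl(S(t-t_0)B_0\bigr)\subset S(t_0)B_0$, and since $S(t_0)$ is compact and $B_0$ is bounded, $\overline{S(t_0)B_0}$ is compact. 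Hence for $s\ge t_1+t_0$ the tail $\overline{\bigcup_{t\ge s}S(t)B_0}$ is a closed subset of this compact set; these sets are nested, decreasing and nonempty, so their intersection $\mathcal A$ is nonempty and compact.

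Next I would establish invariance $S(t)\mathcal A=\mathcal A$. The inclusion $S(t)\mathcal A\subset\mathcal A$ is immediate from the semigroup law and the continuity $(i)$: if $S(t_k)x_k\to y$ then $S(t+t_k)x_k=S(t)S(t_k)x_k\to S(t)y$, so $S(t)y\in\mathcal A$. For the reverse inclusion $\mathcal A\subset S(t)\mathcal A$ I would use compactness again: given $y=\lim_k S(t_k)x_k\in\mathcal A$ with $t_k>t$, the points $S(t_k-t)x_k$ eventually lie in the compact set $\overline{S(t_0)B_0}$, so a subsequence converges to some $z\in\mathcal A$, and continuity of $S(t)$ gives $S(t)z=\lim S(t_k)x_k=y$. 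The attraction property I would prove by contradiction: if some bounded $B\subset U$ were not attracted, there would be $\varepsilon>0$, $t_k\to\infty$, $x_k\in B$ with $\mathrm{dist}(S(t_k)x_k,\mathcal A)\ge\varepsilon$; letting $\tau=t_0(B)$ from $(iii)$, the points $w_k:=S(\tau)x_k$ lie in $B_0$, so $S(t_k)x_k=S(t_k-\tau)w_k$ has, along a subsequence (relative compactness), a limit in $\omega(B_0)=\mathcal A$, contradicting the lower bound.

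Maximality then follows formally: if $Y\subset U$ is bounded and invariant, i.e.\ $S(t)Y=Y$, then $\mathrm{dist}(Y,\mathcal A)=\mathrm{dist}(S(t)Y,\mathcal A)\to0$; the left-hand side is independent of $t$, so it is $0$ and, $\mathcal A$ being closed, $Y\subset\mathcal A$. The same comparison shows $\mathcal A$ contains every bounded attractor in $U$, so it is maximal for the inclusion relation.

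For connectedness, under the extra hypotheses ($X$ Banach, $U$ convex, and $t\mapsto S(t)x$ continuous), I would first replace $B_0$ by the convex set $\widehat B_0:=\mathrm{conv}(B_0)$, which is bounded, contained in $U$ by convexity of $U$, absorbing since it contains $B_0$, and connected; since $\mathcal A$ attracts $\widehat B_0$ one checks $\omega(\widehat B_0)=\mathcal A$. The key point is that each tail $\bigcup_{t\ge s}S(t)\widehat B_0$ is connected \emph{despite only separate continuity being available}: the base $S(s)\widehat B_0$ is connected by $(i)$, each orbit-thread $\{S(t)x:t\ge s\}$ is connected by $(iv)$ and meets the base at $S(s)x$, so the union of the base with all threads is connected. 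Taking closures gives compact connected tails whose nested intersection is $\mathcal A$, and a nested intersection of compact connected sets is connected, which finishes the claim. Finally, when $U=X$ the preceding statements say exactly that $\mathcal A$ is compact, invariant and attracts every bounded set of $X$, i.e.\ it is the global attractor. I expect the delicate points to be the systematic use of the compactness of $S(t_0)$ to obtain relative compactness of the orbit tails (on which nonemptiness, invariance and attraction all rest) and, for the last part, the handling of connectedness with only separate rather than joint continuity, which the base-plus-threads decomposition resolves.
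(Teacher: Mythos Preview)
Your proof is correct and follows the classical argument from \cite[Chapter~I]{Tem97}. Note, however, that the paper does not actually prove Theorem~4.2: it is quoted as an abstract result from Temam's book and then applied with $X=W$ to deduce Theorem~1.3, so there is no ``paper's own proof'' to compare against beyond the reference you are already reproducing.
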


We now carry out the proof of Theorem~\ref{thm1.3} by checking all the conditions of Theorem~\ref{thm4.2} with
\[
	X = W= \{ (\boldsymbol u,\theta)\in H^2(\Omega)^3\,:\,{\mathrm{div}\,}\boldsymbol u=0,\; (u_2,\frac{\partial u_1}{\partial y},\theta )\big|_{y=0,1}=0,\text{ and }\int_\Omega u_1{\mathrm{d}x}{\mathrm{d}y} = 0 \}.
\]
Item $(iii)$, the existence of an absorbing ball in the space $W$, has been proved in Section~\ref{sec2}; item $(i)$ has been proved in Section~\ref{sec3}. Therefore, we are left to check items $(ii)$ and $(iv)$. Once they are proven, Theorem~\ref{thm1.3} will then be completely proved by the application of Theorem~\ref{thm4.2}.

Let us first check item $(iv)$. We need a useful result from Lions and Magenes \cite[Chapter I, Theorem 3.1]{LM72}, which implies that

\begin{lemma}\label{lem4.1.1}
	Let $X$ and $Y$ be two Hilbert spaces such that $X$ is separable with
	\[
	X\subset Y,\quad X\text{ dense in }Y\text{ with continuous injection.}
	\]
	Suppose that $u\in L^2(0, t_1; X)$ and $u_t\in L^2(0, t_1; Y)$. Then $u$ is almost everywhere equal to a function continuous from $[0, t_1]$ into $[X,Y]_{1/2}$.
\end{lemma}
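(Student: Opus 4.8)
The plan is to recognize this as the classical trace theorem of Lions and Magenes and, since both $X$ and $Y$ are Hilbert spaces, to give the cleaner spectral proof rather than the general interpolation argument. First I would fix the identification of the interpolation space. Because $X\hookrightarrow Y$ is dense and continuous with $X,Y$ Hilbert, there is a positive self-adjoint operator $\Lambda$ on $Y$ with domain $D(\Lambda)=X$ and $\aiminnorm{u}_X\backsimeq\aiminnorm{\Lambda u}_Y$; after adding a multiple of the identity we may take $\Lambda\geq I$. The first genuine step is then to record that $[X,Y]_{1/2}=D(\Lambda^{1/2})$ with equivalent norms, which is exactly the characterization of the interpolation space at exponent $1/2$ in the Hilbert setting, and it is where the meaning of $[X,Y]_{1/2}$ in the statement enters the argument.

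Second, I would diagonalize $\Lambda$ by the spectral theorem: there exist a measure space $(M,\mu)$ and a unitary $\mathcal F\colon Y\to L^2(M,\mu)$ carrying $\Lambda$ to multiplication by a measurable function $\lambda(\xi)\geq 1$. Writing $\hat u=\mathcal F u$, the hypotheses $u\in L^2(0,t_1;X)$ and $u_t\in L^2(0,t_1;Y)$ translate into
\[
\int_0^{t_1}\!\!\int_M \lambda(\xi)^2|\hat u(t,\xi)|^2\,\mathrm d\mu\,\mathrm dt<\infty,\qquad \int_0^{t_1}\!\!\int_M |\partial_t\hat u(t,\xi)|^2\,\mathrm d\mu\,\mathrm dt<\infty,
\]
while the assertion to prove is that $t\mapsto\int_M\lambda(\xi)|\hat u(t,\xi)|^2\,\mathrm d\mu=\aiminnorm{u(t)}_{[X,Y]_{1/2}}^2$ agrees almost everywhere with a continuous function and that $u$ admits a representative continuous into $D(\Lambda^{1/2})$.

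Third, I would argue fiberwise. By Fubini, for $\mu$-almost every $\xi$ the scalar function $t\mapsto\hat u(t,\xi)$ lies in $H^1(0,t_1)$, hence is absolutely continuous on this bounded interval, and
\[
\lambda(\xi)|\hat u(t,\xi)|^2-\lambda(\xi)|\hat u(s,\xi)|^2=\int_s^t 2\lambda(\xi)\,\mathrm{Re}\big(\overline{\hat u(\tau,\xi)}\,\partial_\tau\hat u(\tau,\xi)\big)\,\mathrm d\tau.
\]
The pointwise bound $2\lambda|\overline a\,b|\leq\lambda^2|a|^2+|b|^2$ dominates the integrand by the two integrable densities above, so integrating in $\xi$ and applying the dominated convergence theorem shows that $t\mapsto\aiminnorm{u(t)}_{[X,Y]_{1/2}}^2$ is continuous and uniformly bounded on $[0,t_1]$. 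Combining this norm continuity with the elementary embedding $u\in H^1(0,t_1;Y)\hookrightarrow\mathcal C([0,t_1];Y)$ (so $t\mapsto\hat u(t)$ is continuous, hence weakly continuous into $D(\Lambda^{1/2})$ after noting the norms stay bounded) upgrades weak to strong continuity into $D(\Lambda^{1/2})$ by the Radon--Riesz property of Hilbert spaces, which is precisely the claim.

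I expect the main obstacle to be the first step, namely rigorously identifying $[X,Y]_{1/2}$ with $D(\Lambda^{1/2})$ and thereby tying the abstract interpolation norm in the statement to the spectral quantity $\int_M\lambda|\hat u|^2\,\mathrm d\mu$; once that identification is secured, the remainder is a routine one-dimensional absolute-continuity computation transported through the spectral representation. Since the result is entirely standard, in the paper itself I would simply invoke \cite[Chapter~I, Theorem~3.1]{LM72}, the sketch above merely indicating the proof that reference supplies.
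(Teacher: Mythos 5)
Your proposal is correct, but it is worth noting that the paper does not prove this lemma at all: it is stated purely as a quotation of Lions--Magenes \cite[Chapter~I, Theorem~3.1]{LM72}, exactly as you anticipate in your closing remark. So the comparison is between your self-contained spectral argument and the cited reference. Two observations. First, the step you flag as the main obstacle --- identifying $[X,Y]_{1/2}$ with $D(\Lambda^{1/2})$ --- is not an obstacle in this context: the paper points to \cite[Chapter~I, Section~2.1]{LM72} for the definition of $[X,Y]_{1/2}$, and there Lions--Magenes \emph{define} $[X,Y]_{\theta}$ as $D(\Lambda^{1-\theta})$ for precisely the operator $\Lambda$ you construct; so your first step is essentially definitional (it would require proof only if one insisted on starting from complex interpolation). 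Second, your route differs from the proof the reference itself supplies: Lions--Magenes argue via extension to the whole line, Fourier transform in $t$, and the intermediate derivatives theorem, whereas you diagonalize $\Lambda$ once and for all and reduce everything to a fiberwise absolute-continuity computation, recovering strong continuity from norm continuity plus weak continuity (Radon--Riesz). Your argument is complete as sketched --- the domination $2\lambda|\bar a b|\le\lambda^2|a|^2+|b|^2$ together with averaging over $s$ also yields the finiteness of $\|u(t)\|_{D(\Lambda^{1/2})}$ for \emph{every} $t$, which is needed before one can speak of its continuity, and your use of $H^1(0,t_1;Y)\hookrightarrow\mathcal C([0,t_1];Y)$ pins down the correct representative --- so what it buys is a proof readable without the machinery of \cite{LM72}, at the cost of a page of spectral-theoretic bookkeeping that the paper avoids by citation.
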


For a definition of the interpolation space $[X,Y]_{1/2}$, see \cite[Chapter I, Section 2.1]{LM72}. We also infer from \cite[Chapter I, Theorem 9.6]{LM72} that
\begin{equation}\label{eq4.1.1}
	[ H^3(\Omega), H^1(\Omega)]_{1/2} = H^2(\Omega).
\end{equation}

Now, we are ready to show item $(iv)$. Given $(\boldsymbol u_0,\theta_0)\in W$, then it has been shown in Section~\ref{sec2} that for any fixed $ t_1>0$, the strong solutions $(\boldsymbol u,\theta)$ of the Boussinesq system \eqref{eq1.1.3}-\eqref{eq1.1.4} satisfy
\[
(\boldsymbol u, \theta)\in L^2(0, t_1; H^3(\Omega)),\quad\quad (\partial_t\boldsymbol u,\partial_t\theta)\in L^2(0, t_1; H^1(\Omega)).
\]
Then applying Lemma~\ref{lem4.1.1} with $X=H^3(\Omega)$ and $Y=H^1(\Omega)$ and using \eqref{eq4.1.1}, we find that
\begin{equation}
(\boldsymbol u, \theta)\in \mathcal C([0, t_1]; [H^3(\Omega), H^1(\Omega)]_{1/2}=H^2(\Omega)),
\end{equation}
which proves item $(iv)$.



We are now going to check item $(ii)$. Before we show the compactness of $S(t)$ for some $t>0$, let us first recall the compactness lemma of J.-L. Lions \cite{Lio69} (see also \cite{Aub63}).
\begin{lemma}\label{lem4.1.2}[Aubin-Lions]
	Let $X_0$, $X$ and $X_1$ be Banach spaces such that $X_0$ and $X_1$ are reflexive, $X_0\hookrightarrow X \hookrightarrow X_1$, each embedding is continuous and each space is dense in the next one. Assume also that $X_0$ is compactly embedded in $X$. For $0< t_1<\infty$, let
	\[
		Y:=\aiminset{ u\,:\, u\in L^2(0, t_1; X_0),\; \frac{{\mathrm{d}} u}{{\mathrm{d}t}}\in L^2(0, t_1; X_1) }.
	\]
	Then $Y$ is a Banach space equipped with the norm $\aiminnorm{u}_{L^2(0, t_1; X_0) } +\aiminnorm{u'}_{L^2(0, t_1; X_1) }$. Moreover, $Y$ is compactly embedded in $L^2(0, t_1; X)$.
\end{lemma}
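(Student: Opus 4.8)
The plan is to prove the two assertions of Lemma~\ref{lem4.1.2} separately: that $Y$ with the graph norm is complete, and that the inclusion $Y\hookrightarrow L^2(0,t_1;X)$ is compact. Completeness is routine: if $\{u_k\}$ is Cauchy in $Y$, then by completeness of $L^2(0,t_1;X_0)$ and $L^2(0,t_1;X_1)$ there are limits $u_k\to u$ in $L^2(0,t_1;X_0)$ and $u_k'\to w$ in $L^2(0,t_1;X_1)$; since $X_0\hookrightarrow X_1$ continuously, one may pass to the limit in the distributional identity $\int_0^{t_1}\phi'(t)\,u_k(t)\,\mathrm{d}t=-\int_0^{t_1}\phi(t)\,u_k'(t)\,\mathrm{d}t$ (valid for scalar test functions $\phi$) to identify $w=u'$, whence $u\in Y$ and $u_k\to u$ in the graph norm. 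The substance of the lemma is the compactness statement, which I attack in three steps.

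First I would record the Lions--Ehrling interpolation inequality: for every $\eta>0$ there is a constant $C_\eta$ with
\[
\|v\|_X \le \eta\|v\|_{X_0} + C_\eta\|v\|_{X_1},\qquad \forall\,v\in X_0.
\]
This follows by contradiction --- were it to fail there would be $v_k$ with $\|v_k\|_{X_0}=1$ and $\|v_k\|_X > \eta + k\|v_k\|_{X_1}$; compactness of $X_0\hookrightarrow X$ lets me pass to a limit $v$ strongly in $X$ with $\|v\|_X\ge\eta>0$, while $\|v_k\|_{X_1}\to0$ forces $v=0$ in $X_1$ and hence in $X$, a contradiction. Applying this inequality pointwise in $t$ to a difference $u_m-u$ and integrating gives
\[
\|u_m-u\|_{L^2(0,t_1;X)} \le \eta\,\|u_m-u\|_{L^2(0,t_1;X_0)} + C_\eta\,\|u_m-u\|_{L^2(0,t_1;X_1)}.
\]
Thus, starting from a bounded sequence $\{u_m\}\subset Y$ and using the reflexivity of $X_0,X_1$ (so that $L^2(0,t_1;X_0)$ and $L^2(0,t_1;X_1)$ are reflexive) to extract a subsequence with $u_m\rightharpoonup u$ in $L^2(0,t_1;X_0)$ and $u_m'\rightharpoonup u'$ in $L^2(0,t_1;X_1)$, the first term on the right is bounded by $\eta$ times a fixed constant; it therefore suffices to prove that $u_m\to u$ strongly in $L^2(0,t_1;X_1)$, after which letting $\eta\to0$ yields convergence in $L^2(0,t_1;X)$.

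Finally, I establish this strong convergence via the vector-valued Kolmogorov--Riesz--Fr\'echet compactness criterion in $L^2(0,t_1;X_1)$, checking that $\{u_m\}$ is (a)~bounded, (b)~equicontinuous in the mean, and (c)~sectionally precompact. Boundedness is immediate. For (b) I use $u_m(t+h)-u_m(t)=\int_t^{t+h}u_m'(\tau)\,\mathrm{d}\tau$ together with Cauchy--Schwarz to obtain $\|u_m(\cdot+h)-u_m\|_{L^2(0,t_1-h;X_1)}\le h^{1/2}\|u_m'\|_{L^2(0,t_1;X_1)}\le C h^{1/2}$, uniformly in $m$. For (c), for any measurable $A\subset(0,t_1)$ the average $\int_A u_m(\tau)\,\mathrm{d}\tau$ is bounded in $X_0$ (again by Cauchy--Schwarz), hence lies in a relatively compact subset of $X_1$ because the composed embedding $X_0\hookrightarrow X\hookrightarrow X_1$ is compact. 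The criterion then yields a subsequence converging strongly in $L^2(0,t_1;X_1)$, whose limit must coincide with the weak limit $u$ (strong convergence implies weak convergence, and weak limits are unique); a routine ``every subsequence has a further subsequence'' argument upgrades this to convergence of the full sequence. I expect step (c) --- verifying the sectional compactness condition and confirming that it interacts correctly with the time-equicontinuity (b) --- to be the delicate point, since it is exactly there that the compact embedding $X_0\hookrightarrow X$ is converted into compactness of the time-integrated quantities; the remainder of the argument is soft functional analysis.
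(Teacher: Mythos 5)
Your proof is correct, but note first that the paper itself offers no proof to compare against: Lemma~\ref{lem4.1.2} is recalled as a classical result of J.-L.~Lions \cite{Lio69} (see also \cite{Aub63}) and is used as a black box in Section~\ref{sec4}. What you have written is essentially the standard textbook proof of the Aubin--Lions lemma: completeness of $Y$ via the distributional identification of the limit derivative; the Ehrling-type interpolation inequality $\|v\|_X\le\eta\|v\|_{X_0}+C_\eta\|v\|_{X_1}$ obtained by contradiction from the compact embedding $X_0\hookrightarrow X$; reduction of compactness in $L^2(0,t_1;X)$ to strong convergence in $L^2(0,t_1;X_1)$ along a weakly convergent subsequence; and finally strong $L^2(0,t_1;X_1)$ convergence via a compactness criterion in Bochner spaces. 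The one point that deserves emphasis is your step (c): the scalar Kolmogorov--Riesz--Fr\'echet criterion (boundedness plus translation equicontinuity) is \emph{false} for functions valued in an infinite-dimensional space, and the correct vector-valued version (essentially Simon's theorem on compact sets in $L^p(0,T;B)$) requires exactly the sectional precompactness hypothesis you verify, namely that the averages $\int_A u_m\,\mathrm{d}\tau$ lie in a compact subset of $X_1$; since you check this by pushing the $X_0$-bound through the compact composed embedding $X_0\hookrightarrow X\hookrightarrow X_1$, your argument is sound, but you should cite that vector-valued criterion explicitly rather than leave it implicit. An alternative, closer to Lions' original argument and avoiding any named compactness criterion, is to write $u_m(t)$ as the time average $\frac1h\int_t^{t+h}u_m(s)\,\mathrm{d}s$ plus a remainder controlled by $h^{1/2}\|u_m'\|_{L^2(0,t_1;X_1)}$, and to pass to the limit in the averages using the weak convergence and the compact embedding; both routes deliver the same conclusion.
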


Following \cite[pp. 176-177]{Ju07} line by line, we are able to prove the desired compactness result. We present the details here for the sake of completeness.
Because of the existence of an absorbing set for $\aiminset{S(t)}_{t\geq 0}$ in $W$, to prove the compactness of $\aiminset{S(t)}_{t\geq 0}$ in $W$, it is enough to consider a bounded subset $\mathfrak B$ of $W$. For any fixed $ t_1>0$, define the set $\mathfrak C_{ t_1}$ as a subset of the function space $L^2(0, t_1;L^2(\Omega))$:
\[
\mathfrak C_{ t_1} = \aiminset{ (\boldsymbol u, \theta)\,|\, (\boldsymbol u_0,\theta_0)\in \mathfrak B,\,(\boldsymbol u(t), \theta(t)) = S(t)(\boldsymbol u_0,\theta_0),\;t\in [0, t_1]}.
\]
Notice that for $\Omega$ bounded, $H^1(\Omega)$ is compactly embedded in $L^2(\Omega)$. If $(\boldsymbol u_0,\theta_0)\in \mathfrak B$, then it has been shown in Section~\ref{sec2} that for any fixed $ t_1>0$, the strong solution $(\boldsymbol u,\theta)$ satisfies
\[
(\boldsymbol u, \theta)\in L^2(0, t_1; H^3(\Omega)),\quad\quad (\partial_t\boldsymbol u,\partial_t\theta)\in L^2(0, t_1; H^1(\Omega)).
\]
Therefore, by Lemma~\ref{lem4.1.2} with
\[
X_0=H^3(\Omega),\quad\quad X=H^2(\Omega),\quad\quad X_1=H^{1}(\Omega),
\]
$\mathfrak C_{ t_1}$ is compact in $L^2(0, t_1; H^2(\Omega))$.

In order to show that for any fixed $t>0$, $S(t)$ is a compact operator in $W$, we take any bounded sequence $\aiminset{(\boldsymbol u_{0,n},\theta_{0,n})}_{0}^{\infty}\subset\mathfrak B$ and we want to extract, for any fixed $t>0$, a convergent subsequence from $\aiminset{S(t)(\boldsymbol u_{0,n},\theta_{0,n})}_0^\infty$.

Since $\aiminset{(\boldsymbol u_n, \theta_n)}_0^\infty\subset\mathfrak C_{ t_1}$, by Lemma~\ref{lem4.1.2}, there exists a function $(\bar{\boldsymbol u},\bar\theta)$:
\[
(\bar{\boldsymbol u},\bar\theta) \in L^2(0, t_1; H^2(\Omega)),
\]
and a subsequence of $\aiminset{S(\cdot)(\boldsymbol u_{0,n},\theta_{0,n})}_0^\infty$, still denoted as $\aiminset{S(\cdot)(\boldsymbol u_{0,n},\theta_{0,n})}_0^\infty$ for simplicity of notation, such that
\begin{equation}\label{eq4.1.10}
	\lim_{n\rightarrow \infty}\int_0^{ t_1}\aiminnorm{  S(t)(\boldsymbol u_{0,n},\theta_{0,n}) - (\bar{\boldsymbol u}(t),\bar\theta(t))}_{H^2}^2 {\mathrm{d}t} = 0.
\end{equation}
By Riesz Lemma and \eqref{eq4.1.10}, it then follows that there exists a subsequence of $\aiminset{S(\cdot)(\boldsymbol u_{0,n},\theta_{0,n})}_0^\infty$, still denoted as $\aiminset{S(\cdot)(\boldsymbol u_{0,n},\theta_{0,n})}_0^\infty$ for simplicity of notation, such that
\begin{equation}\label{eq4.1.11}
	\lim_{n\rightarrow \infty}\aiminnorm{  S(t)(\boldsymbol u_{0,n},\theta_{0,n}) - (\bar{\boldsymbol u}(t),\bar\theta(t))}_{H^2}  = 0,\quad\text{ a.e. }t\in (0, t_1).
\end{equation}
Fix any $t\in(0, t_1)$, and by \eqref{eq4.1.11}, we can select a $t_*\in(0,t)$ such that
\[
	\lim_{n\rightarrow \infty}\aiminnorm{  S(t_*)(\boldsymbol u_{0,n},\theta_{0,n}) - (\bar{\boldsymbol u}(t_*), \bar\theta(t_*))}_{H^2}  = 0.
\]
Then by continuity of the map $S(t-t_*)$ in $W$, we have
\begin{equation}\begin{split}
	S(t)(\boldsymbol u_{0,n},\theta_{0,n}) &= S(t-t_*)S(t_*)(\boldsymbol u_{0,n},\theta_{0,n})\\
	&\rightarrow S(t-t_*)(\bar{\boldsymbol u}(t_*), \bar\theta(t_*)),\quad\text{ in }W.
\end{split}\end{equation}
Hence for any $t>0$,  $\aiminset{S(t)(\boldsymbol u_{0,n},\theta_{0,n})}_0^\infty$ contains a subsequence which is convergent in $W$, which proves the compactness of the operator $S(t)$ in $W$. The proof of Theorem~\ref{thm1.3} is then completed.

\section*{Acknowledgments}
This work was partially supported by the National Science Foundation under the grant NSF DMS-1206438, and by the Research Fund of Indiana University. 

\bibliographystyle{amsalpha}
\providecommand{\bysame}{\leavevmode\hbox to3em{\hrulefill}\thinspace}
\providecommand{\MR}{\relax\ifhmode\unskip\space\fi MR }
\providecommand{\MRhref}[2]{%
  \href{http://www.ams.org/mathscinet-getitem?mr=#1}{#2}
}
\providecommand{\href}[2]{#2}

\end{document}